\pgfplotsset{compat=newest}
\def\Re{\mathbb{R}}
\def\hat{\widehat}
\def\Re{{\mathbb R}}
\def\F{{\mathcal F}}
\def\N{{\mathcal N}}
\def\F{{\mathcal F}}
\def\T{{\mathcal T}}
\def\H{{\mathcal H}}
\def\L{{\mathcal L}}
\def\N{{\mathcal N}}
\def\P{{\mathcal P}}
\def\R{{\mathcal R}}
\def\T{{\mathcal T}}
\def\U{{\mathcal U}}
\def\X{{\mathcal X}}
\def\Y{{\mathcal Y}}
\def\e{{\mathbf e}}
\declaretheorem[name=Proposition]{proposition}
\declaretheorem[name=Corollary]{corollary}
\declaretheorem[name=Remark]{remark}
\declaretheorem[name=Example]{example}
\newcommand*{\QEDA}{\hfill\ensuremath{\square}}
\newcommand*{\QEDB}{\hfill\ensuremath{\diamond}}
\let\cline\cmidrule
\DeclareMathOperator*{\argmin}{arg\,min}
\def\rev{\textcolor{black}}
\definecolor{bluegreen}{RGB}{0,158,115}
\title{Regularized MIP Model for Integrating Energy Storage Systems and its Application for Solving a Trilevel Interdiction Problem}
\author[1]{Dahye Han\thanks{\href{mailto:dahye.han@gatech.edu}{dahye.han@gatech.edu}}}
\author[2]{Nan Jiang\thanks{\href{mailto:nanjiang@cornell.edu}{nanjiang@cornell.edu}}}
\author[1]{Santanu S. Dey\thanks{\href{mailto:santanu.dey@isye.gatech.edu}{santanu.dey@isye.gatech.edu}}}
\author[1]{Weijun Xie\thanks{\href{mailto:wxie@gatech.edu}{wxie@gatech.edu}}}
\affil[1]{School of Industrial and Systems Engineering, Georgia Institute of Technology}
\affil[2]{School of Operations Research and Information Engineering, Cornell Tech}
\date{}
\begin{document}
\maketitle	

\begin{abstract}
Incorporating energy storage systems (ESS) into power systems has been studied in many recent works, where binary variables are often introduced to model the complementary nature of battery charging and discharging. A conventional approach for these ESS optimization problems is to relax binary variables and convert the problem into a linear program. 
However, such linear programming relaxation models can yield unrealistic fractional solutions, such as simultaneous charging and discharging. In this paper, we develop a regularized Mixed-Integer Programming (MIP) model for the ESS optimal power flow (OPF) problem. We prove that under mild conditions, the proposed regularized model admits a zero integrality gap with its linear programming relaxation; hence, it can be solved efficiently. By studying the properties of the regularized MIP model, we show that its optimal solution is also near-optimal to the original ESS OPF problem, thereby providing a valid and tight upper bound for the ESS OPF problem. The use of the regularized MIP model allows us to solve a trilevel $\min$-$\max$-$\min$ network contingency problem which is otherwise intractable to solve.
\end{abstract}

\section{Introduction}
\label{sec:introduction}
Modern electrical grids have undergone significant transformations in the past few decades with increased integration of renewable energy resources and distributed energy resources. In spite of many benefits brought by these new entrants, power grids are also experiencing increased uncertainties due to inherent dependency on weather and short-term demand forecasts, which are challenging to accurately predict. To mitigate these challenges, many Independent System Operators (ISOs) are turning their attention to energy storage systems (ESS), also referred to as batteries for convenience in this paper. In their most recent annual study, the \cite{eia2023outlook} predicted 160 gigawatts of total installed battery storage capacity in the U.S. by the year 2050.

The introduction of ESS naturally leads us to revisit existing optimization problems in light of this new market entrant. Some previous studies include unit commitment problems with hydro storage (see, e.g., \citealt{jiang2012uc}), economic dispatch problems (see, e.g., \citealt{yan2016ed}), optimal bidding strategy for battery operators (see, e.g., \citealt{jiang2015optimal}), planning problems for wind farm and battery siting (see, e.g., \citealt{qi2015joint}), and control policy problems for optimizing the revenue of battery operators (see, e.g., \citealt{salas2018benchmarking}).

In this paper, we study the problem of incorporating ESS in the optimal power flow (OPF) problem. Incorporating batteries into the OPF problem adds a unique complexity since a battery can act as both a demand and a generator depending on whether it is charging or discharging. This introduces binary variables, making it a mixed-integer program (MIP) and difficult to solve planning problems of multi-level optimization. Here, we propose a new MIP model for the DC OPF problem with ESS that has a zero-integrality gap under certain conditions and we apply the model to a long-term planning problem to show the efficacy of the model.

\subsection{Relevant Literature}
Modeling the DCOPF problem with ESS has been of interest for more than a decade. 
\cite{xu2010opfbattery} proposed a simple OPF model incorporating ESS, demonstrating a pattern for the state-of-charge of a battery under the assumption of it being lossless for the case with a single generator and a single demand load.
However, this model oversimplifies the reality that batteries do incur losses.
\cite{ess_modelling} summarized several essential constraints for energy storage models including the energy loss factors with a round-trip efficiency of less than $1$. 
These constraints form the basis of many works that embed ESS in their corresponding optimization models (see the details in \citealt{lohndorf2023value,wu2023distributed}). 
Yet the model admits a computationally simple structure and fails to account for the complementary nature of charging and discharging.

A more accurate model of battery operations formulates the problem with binary variables to represent the complementary nature of charging and discharging for a battery. Although binary variables represent more realistic battery operations, mixed-integer models are more challenging to solve due to the nonconvex nature of the resulting formulations. To avoid this complexity, many papers use convex constraints to model the battery operations (see, e.g., \citealt{ess_uc,ess_uc_2,kody2022}). Specifically, \cite{pozo2022lpmodel} summarized various linear programming (LP) formulations that are valid relaxations of the MIP. However, these simple convex models may yield unrealistic solutions for the sake of simplicity. This issue has been discussed in \cite{ess_convex_model}, where counterexamples demonstrate that a battery charges and discharges simultaneously despite satisfying all conditions presented for strong convex relaxation models.

The scarcity of long-term planning studies that include ESS could be attributed to the lack of realistic yet efficient models for DCOPF with batteries, despite its potential to enhance reliability in the smart grid.
The $N-k$ contingency problem is one of these critical long-term planning problems that addresses disruptions or attacks within a system. 
Power grids face contingencies stemming from cyber-security attacks (see~\citealt{doe2021cybersecurity}) as well as unforeseen transmission line failures due to various physical threats, including an increasing frequency of wildfire incidents over the years.
While \citet{yang2022opfn-1} studied optimal power flow under $N-1$ contingency in power systems, to the best of our knowledge, no prior work has studied the battery placement problem under $N-k$ contingency.

\subsection{Summary of Contributions}
In this paper, we propose a regularized MIP model for battery operations in the DC optimal power flow problem.
The main benefits of the regularized MIP model are summarized below.
\begin{itemize}
    \item \textit{Regularized MIP model with zero integrality gap}. \quad In the exact battery model, one must enforce $p^c_t p^d_t = 0$, 
    where $p^c_t$ and $p^d_t$ are the amount of power being used to charge and discharge a battery at time $t$, respectively. 
    One can view this as enforcing a very specific sparsity condition. Often, sparsity is achieved by the addition of an $\ell_1$ regularizer penalty (see, e.g., \citealt{tibshirani1996regression,dey2022using}). In the same spirit, we perturb the original objective function of DCOPF with batteries by adding $\ell_1$ regularizers with respect to $p^c_t$ and $p^d_t$ for all times $t$. We prove that under mild conditions that are standard in most of the literature (see, e.g., \citealt{pozo2022lpmodel,kody2022}) and for a sufficiently large penalty, where the penalty value depends only on the efficiency of the battery (see details in Section~\ref{sec:regularized_mip_model}), the regularized MIP has zero integrality gap with its LP relaxation. 
    This regularized MIP model achieves the goal of being simple to solve yet produces feasible solutions for the actual battery operations. Moreover, the required penalty value is quite small for standard battery efficiencies, thus yielding near-optimal solutions in all our studies.
    \item \textit{High-quality upper bound}.\quad The optimal solution \rev{to} the regularized MIP model is a feasible solution to the original MIP and provides a valid and tight upper bound. We formally study the structural difference between the optimal solution \rev{to} the regularized problem and the original battery problem, provide an exactness condition, that is a condition under which we obtain the same solution, and prove a worst-case bound on the gap between their optimal objective values.
    We also evaluate our regularized MIP with a specific choice of regularizer parameter and empirically show that in practice the relative gap is small.
    \item \textit{Application to a long-term planning problem.} Leveraging the benefit of having no integrality gap for the regularized MIP, we examine the challenging application of trilevel $\min$-$\max$-$\min$ problem involving binary decision variables at each level. At the outermost level, a network designer is planning the locations of the batteries. The middle level is an interdictor allowed to attack a budgeted amount of the network. The innermost minimization problem is the system operator solving a DCOPF problem with batteries. 
    Using our regularized model in the third level allows us to replace an integer program with its LP relaxation. We can then take the dual of this LP relaxation, thus allowing the problem to be reduced to a bilevel problem with bounded dual variables (see details in \Cref{sec:trilevel_n-k_contingency}). The resulting bilevel problem can now be solved more efficiently with \rev{existing} combinatorial algorithms \rev{(see \citealt{bienstock2008computing} and \citealt{yang2022opfn-1})}.
    We empirically test on instances with up to $2000$ nodes and are able to solve this class of challenging trilevel instances in less than $6$ hours. To the best of our knowledge, this is the first time that the $N-k$ contingency problem for battery operations has been studied. This model may be of independent interest to the power systems research community.
\end{itemize}

\noindent\textbf{Organization.}
The remainder of the paper is organized as follows. Section~\ref{sec:original_formulation} provides a detailed mathematical formulation of the DCOPF with ESS. 
Section~\ref{sec:regularized_mip_model} introduces the regularized MIP model, presents its structural properties, and provides a comparison against the original MIP model. 
Section~\ref{sec:trilevel_n-k_contingency} introduces the trilevel $N-k$ contingency problem with ESS siting.
Section~\ref{sec:numerical_experiment} details the experimental settings and shows the computational results that demonstrate the power of using regularized formulation.
Section~\ref{sec:conclusion} concludes the paper.

\noindent\textbf{Notation.} The following notation is used throughout the paper. We use bold letters (e.g., $\bm{x},\bm{y}$) to denote vectors and matrices and use corresponding non-bold letters to denote their components. Given an integer $n$, we let $[n]:=\{1,2,\ldots,n\}$, and use $\Re_+^n:=\{\bm {x}\in \Re^n:x_i\geq0, \forall i\in [n]\}$. We let $\e$ be the vector or matrix of all ones and let $e_i$ be the $i$-th standard basis vector. Additional notations are introduced as needed. 

\section{OPF Formulation with Energy Storage Systems}
\label{sec:original_formulation}
In this section, we present the mathematical model for the DCOPF problem with batteries placed at a subset of network buses. 
Consider a network with a set of buses denoted as $\N$ and a set of transmission lines denoted as $\L$. 
The batteries are placed at a subset of buses $\N_b \subseteq \N$.
For simplicity, we assume that all batteries within the network have the same initial state-of-charge and configuration of efficiency level, lower and upper bounds on state-of-charge, charging rate, and discharging rate.  
Moreover, we assume these parameters remain unchanged over time. 
Our results on the regularized formulation in the next section do not require these assumptions. 
Let set $\T=\{1,\dots,T\}$ represent the finite time horizon with equal time intervals. 
At each time $t \in \T$, we decide the power output of each generator $p^g_{t,i}$ at bus $i \in \N$, taking into account the minimum and maximum generation limits, denoted by $G^{\max}_{i}$ and $G^{\min}_{i}$, respectively, i.e.,
\begin{subequations}
\begin{align}
    & G^{\min}_{i} \le p^g_{t,i} \le G^{\max}_{i}, &&  \forall  t \in \T, \; i \in \N.\label{eq:generation_limits}
\end{align}
Notice that if there is no generator at a certain bus $i \in \N$, we set $G_i^{\min}=G_i^{\max}=0$.
We also decide the power flow $f_{t,ij}$ through transmission line  $(i,\;j) \in \L$ subject to the limit for both directions $F_{ij}$, i.e., 
\begin{align}
    & -F_{ij} \le f_{t,ij} \le F_{ij}, && \forall  t \in \T, \; (i,j) \in \L. \label{eq:flow_limits}
\end{align}
The flow on a transmission line is proportional to the difference in phase angles of the corresponding buses:
\begin{align}
    & f_{t,ij} = B_{ij} (\theta_{t,i} - \theta_{t,j}), && \forall  t \in \T, \; (i,j) \in \L, \label{eq:dc_power_flow}
\end{align}
where $B_{ij}$ is the susceptance of the transmission lines $(i,\;j) \in \L$.

For batteries placed at certain nodes $i\in\N_b$, we determine the operations of the batteries. Since a battery can only charge or discharge at a given time point, we use a binary variable $u_{t,i} \in \{0,1\}$ to denote charging ($u_{t,i}=1$) and discharging ($u_{t,i}=0$) states. 
When a battery is charging, we decide the charging amount $p^c_{t,i}$ subject to its upper and lower limits, denoted as $E^{\max}_c$ and $E^{\min}_c$, respectively.  
Similarly, when the battery is discharging, we decide the discharging amount $p^d_{t,i}$ subject to its upper limit $E^{\max}_d$ and lower limit $E^{\min}_d$, i.e.,
\begin{align}
    & E_c^{\min} u_{t,i} \le p^{c}_{t,i} \le E_c^{\max} u_{t,i}, && \forall  t \in \T, \; i \in \N_b, \label{eq:battery_pc_limits}\\
    & E_d^{\min} (1-u_{t,i}) \le p^{d}_{t,i} \le E_d^{\max} (1-u_{t,i}), && \forall  t \in \T, \; i \in \N_b. \label{eq:battery_pd_limits}
\end{align}
\rev{We use $\eta_c \in (0,1]$ to denote the charging efficiency and $1/\eta_d \in [1, \infty)$ to represent the discharging efficiency, accounting for energy losses incurred during imperfect round-trip energy conversions, which may result from factors such as friction.}
The state-of-charge, represented by $p^s_{t,i}$, evolves based on the amount of battery charging and discharging.
We assume an initial state-of-charge to be $E_0$, i.e.,
\begin{align}
    & p^{s}_{t,i} = p^{s}_{t-1,i} + \rev{\eta_c} \cdot p^{c}_{t,i}- {1}/\rev{\eta_d} \cdot p^{d}_{t,i}, && \forall  t \in \T, \; i \in \N_b, \label{eq:battery_storage}\\
    & p^{s}_{0,i} = E_{0}, &&  \forall i \in \N_b. \label{eq:battery_initial_state}
\end{align}
\rev{Note that we have $\eta_c \leq 1 / \eta_d$, since otherwise, \eqref{eq:battery_storage} may result in arbitrage power.}

The state-of-charge of batteries is subject to upper and lower bounds for reliable operations, i.e.,
\begin{align}
    & E^{\min} \le p^{s}_{t,i} \le E^{\max}, && \forall  t \in \T, \; i \in \N_b. \label{eq:battery_ps_limits}
\end{align}
In case there are no batteries at certain nodes $i \in \N \setminus \N_b$, all battery-related variables are set to zero, i.e.,
\begin{align}
    & p^{s}_{t,i} = p^{s}_{0,i} = p^{c}_{t,i} = p^{d}_{t,i} = u_{t,i} = 0, && \forall  t \in \T, \; i \in \N \setminus \N_b.\label{eq:no_battery}
\end{align}
These operational decisions may lead to load shedding $p^{ls}_{t,i}$ if available power at bus $i \in \N$ is insufficient to meet the demand $D_{t,i}$.
Conversely, the system may experience excess power $p^{ex}_{t,i}$ if available power exceeds demand. \rev{Excess power can be interpreted as the counterpart of load shedding. 
At a specific node, load shedding occurs when the available power is insufficient to meet the demand, while excess power arises when there is more power than needed. This surplus power can threaten
grid stability and may result in curtailment (see e.g., \citealt{nrelextremeweather,novacheck2024weather}). With the increasing penetration of renewable energy, which is highly variable, managing excess power is as important as managing load shedding.
In recent years, California has seen a surge in solar power being a challenge to their power grid (\citealt{washingtonpost}).}
These variables act as slack variables and are always nonnegative:
\begin{align}
    & \bm p^{ls},\; \bm p^{ex} \ge \bm 0. \label{eq:battery_plspex_lb}
\end{align}
It is important to note that in an optimal solution, only one of load shedding or excess power can occur at a given time at each bus, ensuring that $p^{ls}_{t,i} p^{ex}_{t,i}=0$ for all $i \in \N$ and $t \in \T$. 

Altogether, these decisions must satisfy the power balance equation:
\begin{align}
    & \sum_{j \in \delta^{+}_{i}} f_{t,ij} - \sum_{j \in \delta^-_{i}} f_{t,ji} = p^{g}_{t,i} - D_{t,i} - p^{c}_{t,i} + p^{d}_{t,i} + p^{ls}_{t,i} - p^{ex}_{t,i}, && \forall  t \in \T, \; i \in \N, \label{eq:power_balance}
\end{align}
where $\delta^+_i = \{j \in \N : (i,j) \in \L\}$ and $\delta^-_i = \{j \in \N : (j,i) \in \L\}$. 
\rev{
The objective is to minimize the system cost including generator costs as well as load shedding and excess power over the entire network buses $i \in \N$ and the time horizon $t \in \T$. 
Since load shedding and excess power can be detrimental to the system, usually large constant $M$ is multiplied to estimate the cost. 
We normalize the cost of load shedding and excess power to $1$. We let $c^g(\cdot)$ denote the normalized generator cost function, which is usually linear or convex quadratic. 
}
\end{subequations}

For the rest of the paper, for simplicity, we use $\bm p = (\bm p^g, \bm p^s, \bm p^c, \bm p^d, \bm p^{ls}, \bm p^{ex})$, 
\rev{the system-wide cost $c(\bm p) = c^g(\bm p^{g}) + \sum_{t \in \mathcal{T}} \sum_{i \in \mathcal{N}} \left(p^{ls}_{t,i}+p^{ex}_{t,i} \right)$,}
\rev{$T = |\T|$,} $N = |\N|$, and $N_b = |\N_b|$.
Using these notations, we are now ready to introduce the DCOPF problem with the battery, that is, 
\begin{equation}
\tag{Battery}
    \rev{z^{\mathrm{ori}}} = \min_{\bm \theta, \bm f,\bm p, \bm u} \left\{ c(\bm p) \colon \eqref{eq:generation_limits}-\eqref{eq:power_balance},\;\bm u\in\{0,1\}^{T \times N} \right\}.\label{formulation:original_mip} 
\end{equation}
Without loss of generality, we assume that $0 \le F_{ij}$ for all $(i,j) \in \L$, $0 \le G^{\min}_i \le G^{\max}_i$ for each $i \in \N$, $0 \le E^{\min} \le E^{\max}$, $0 \le E^{\min}_c \le E^{\max}_c$, and $0 \le E^{\min}_d \le E^{\max}_d$.
Hence, it follows immediately $\bm p^{g},\; \bm p^{s},\; \bm p^{c},\; \bm p^{d} \ge \bm 0$. 

\section{Regularized MIP Model}
\label{sec:regularized_mip_model}
In this section, we first introduce a regularized MIP model and provide conditions such that this regularized MIP has the same optimal objective function value as its LP relaxation.
To simplify the notation, we employ the function $g(\cdot)$ to map $\bm p$ to a two-dimensional vector: $g(\bm p) = \left(\sum_{t \in \T} \sum_{i \in \N}p^{c}_{t,i},\;\sum_{t \in \T} \sum_{i \in \N}p^{d}_{t,i} \right)^\top $.
Now, we introduce a regularization function aimed at penalizing $\bm p^c$ and $\bm p^d$ with a given $\bm \lambda = (\lambda_c,\; \lambda_d)^\top \in \mathbb{R}^2_+$, that is, 
\begin{equation}
\tag{Reg-Battery}
   \rev{z^{\mathrm{reg}}(\bm \lambda)} = \min_{\bm \theta, \bm f,\bm p,\bm u} \left\{ c(\bm p) + \bm \lambda ^\top  g(\bm p) \colon \eqref{eq:generation_limits}- \eqref{eq:power_balance},\; \bm u \in \{0,1\}^{T\times N} \right\}.\label{formulation:regularization}
\end{equation}
\rev{
Note that $\lambda_c$ and $\lambda_d$ are the penalty terms with respect to the normalized cost of load shedding $\bm p^{ls}$ and the excess power $\bm p^{ex}$.
}
In \eqref{formulation:regularization} problem, the only binary decision is $\bm u \in \{0,1\}^{  T\times N}$. 
By relaxing this binary variable $\bm u$ to be continuous, we have the following convex relaxation for \eqref{formulation:regularization} problem:
\begin{equation}
\tag{LP-Reg-Battery}
   \rev{z_l^{\mathrm{reg}}(\bm \lambda)} = \min_{\bm \theta, \bm f,\bm p,\bm u} \left\{ c(\bm p) + \bm \lambda^\top g(\bm p) \colon \eqref{eq:generation_limits}- \eqref{eq:power_balance},\; \bm u \in [0,1]^{T\times N} \right\}.\label{formulation:regularization_lp}
\end{equation}
One of our main results in this section is to provide nontrivial sufficient conditions such that \eqref{formulation:regularization} problem and \eqref{formulation:regularization_lp} problem have the same optimal objective function value. 

\begin{restatable}{theorem}{theoremiplpcondition}\label{theorem:ip=lp_condition}
Suppose that $E_c^{\min} = E_d^{\min}=0$. If \rev{$\lambda_c + \eta_c \eta_d \lambda_d \geq 1-\eta_c \eta_d$}, then we have that $\rev{z^{\mathrm{reg}}(\bm \lambda)} =  \rev{z^{\mathrm{reg}}_{l}(\bm \lambda)}$.
\end{restatable}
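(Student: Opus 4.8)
The plan is to establish the nontrivial inequality $z^{\mathrm{reg}}(\bm\lambda)\le z^{\mathrm{reg}}_l(\bm\lambda)$; the reverse is immediate because \eqref{formulation:regularization_lp} merely relaxes the integrality of $\bm u$ in \eqref{formulation:regularization}. I would start from an optimal solution $(\bm\theta,\bm f,\bm p,\bm u)$ of \eqref{formulation:regularization_lp} and transform it, without increasing the objective, into a solution that is feasible for \eqref{formulation:regularization}. The only obstruction to integrality is a bus--time pair $(t,i)$ with fractional $u_{t,i}\in(0,1)$, at which the bounds \eqref{eq:battery_pc_limits}--\eqref{eq:battery_pd_limits} permit simultaneous charging and discharging, $p^c_{t,i}>0$ and $p^d_{t,i}>0$. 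The heart of the argument is a local exchange that removes such simultaneity one pair at a time.

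At each offending pair I would decrease $p^c_{t,i}$ by $\delta$ and $p^d_{t,i}$ by $\eta_c\eta_d\,\delta$, where $\delta=\min\{p^c_{t,i},\,p^d_{t,i}/(\eta_c\eta_d)\}$. This efficiency-weighted ratio is dictated by the state-of-charge recursion \eqref{eq:battery_storage}, since $\eta_c(p^c_{t,i}-\delta)-\tfrac1{\eta_d}(p^d_{t,i}-\eta_c\eta_d\delta)=\eta_c p^c_{t,i}-\tfrac1{\eta_d}p^d_{t,i}$, so every $p^s_{t',i}$ is left untouched and no constraint at any other time step is disturbed. The exchange shifts the net battery injection $-p^c_{t,i}+p^d_{t,i}$ appearing in the power balance \eqref{eq:power_balance} by exactly $(1-\eta_c\eta_d)\delta\ge 0$, which I absorb by raising the excess-power slack $p^{ex}_{t,i}$ by the same amount, keeping \eqref{eq:power_balance} and the nonnegativity \eqref{eq:battery_plspex_lb} intact. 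By the choice of $\delta$, at least one of $p^c_{t,i},p^d_{t,i}$ is driven to zero, and because $E^{\min}_c=E^{\min}_d=0$ I may then set $u_{t,i}=1$ (if $p^d_{t,i}=0$) or $u_{t,i}=0$ (if $p^c_{t,i}=0$) without violating \eqref{eq:battery_pc_limits}--\eqref{eq:battery_pd_limits}; this is exactly where the vanishing lower bounds are essential.

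It remains to track the objective. Since $c(\bm p)+\bm\lambda^\top g(\bm p)$ does not depend on $\bm u$, the rounding of $u_{t,i}$ is free, and the net effect of one exchange is $-\lambda_c\delta-\lambda_d\eta_c\eta_d\delta$ from the regularizer plus $(1-\eta_c\eta_d)\delta$ from the added excess power, i.e.\ a change of $\big[(1-\eta_c\eta_d)-(\lambda_c+\eta_c\eta_d\lambda_d)\big]\delta$. The hypothesis $\lambda_c+\eta_c\eta_d\lambda_d\ge 1-\eta_c\eta_d$ makes this nonpositive, so no exchange increases the objective. Applying the exchange to every pair with simultaneous charge/discharge thus produces a solution feasible for \eqref{formulation:regularization} whose objective is at most $z^{\mathrm{reg}}_l(\bm\lambda)$, giving $z^{\mathrm{reg}}(\bm\lambda)\le z^{\mathrm{reg}}_l(\bm\lambda)$ and hence equality.

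Two points deserve care. First, one must check that the exchanges do not interfere: because each preserves the entire state-of-charge trajectory and touches only $p^c_{t,i},p^d_{t,i},p^{ex}_{t,i}$ at its own index, they can be applied independently over all $(t,i)$, so the objective accounting accumulates cleanly. Second, the main conceptual obstacle is identifying the correct exchange direction --- reducing $p^c$ and $p^d$ in the ratio $1:\eta_c\eta_d$ --- and dumping the resulting power-balance residual into $p^{ex}$ rather than into generation or flows; it is precisely this choice that surfaces the threshold $1-\eta_c\eta_d$ and matches it to the stated condition on $\bm\lambda$.
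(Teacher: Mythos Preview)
Your proposal is correct and follows the same essential strategy as the paper: start from an LP optimum, and at each pair $(t,i)$ with simultaneous charging and discharging apply the efficiency-weighted reduction $p^c\to p^c-\delta$, $p^d\to p^d-\eta_c\eta_d\,\delta$ with $\delta=\min\{p^c,\,p^d/(\eta_c\eta_d)\}$, which leaves every $p^s$ untouched.

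Where you differ from the paper is in how you absorb the power-balance residual. You push the entire $(1-\eta_c\eta_d)\delta$ into $p^{ex}$, giving a one-line objective computation that immediately isolates the threshold $\lambda_c+\eta_c\eta_d\lambda_d\ge 1-\eta_c\eta_d$. The paper instead resets both slacks to $\tilde p^{ls}=\max\{\,\cdot\,,0\}$ and $\tilde p^{ex}=\max\{\,\cdot\,,0\}$ so that $\tilde p^{ls}\tilde p^{ex}=0$, and then runs through a four-case analysis (according to which of $\tilde p^c,\tilde p^d,\tilde p^{ls},\tilde p^{ex}$ vanish) to bound the objective change. Your route is more economical for proving the theorem as stated; the paper's symmetric treatment of $p^{ls}$ and $p^{ex}$ does not buy anything extra here, though it mirrors constructions used elsewhere in the paper.
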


\begin{proof}
\eqref{formulation:regularization_lp} problem is a relaxation of \eqref{formulation:regularization} problem, so it remains to show that an optimal solution \rev{to} \eqref{formulation:regularization_lp} problem is achieved with $u_{t,i}\in \{0,1\}$ for all $t \in \T$ and $i \in \N_b$, which is equivalent to showing that $p^c_{t,i}=0$ or $p^d_{t,i}=0$ for all $t \in \T$ and $i \in \N$. Let $(\hat{\bm \theta},\hat{\bm f},\hat{\bm p},\hat{\bm u})$ be an optimal solution \rev{to} \eqref{formulation:regularization_lp} problem. 
Suppose that $\hat{p}^{c}_{t^*,i^*} >0$ and $\hat{p}^{d}_{t^*,i^*} >0$ for some $t^*\in \T$ and $i^*\in \N$. 
We show that we can always find another feasible solution $(\tilde{\bm \theta},\tilde{\bm f},\tilde{\bm p},\tilde{\bm u})$ such that at most one of  $\tilde{p}^{c}_{t^*,i^*}$ and $\tilde{p}^{d}_{t^*,i^*}$ is positive for this given $t^*\in \T$ and $i^*\in \N$ and the corresponding objective value is at least as good as that of $(\hat{\bm \theta},\hat{\bm f},\hat{\bm p},\hat{\bm u})$. Such a solution can be constructed as follows:
\begin{subequations}
\begin{align}
    & \tilde{\bm \theta} = \hat{\bm \theta}, \; \tilde{\bm f} = \hat{\bm f}, \; \tilde{\bm p}^g = \hat{\bm p}^g, \; \tilde{\bm p}^s = \hat{\bm p}^s, && \notag \\
    & \rev{\tilde{p}^c_{t,i} = \max\{\hat{p}^{c}_{t,i} - \hat{p}^{d}_{t,i}/(\eta_c \eta_d) , 0\},} && \forall t \in \T, \; i \in \N, \label{eq:tilde_pc}\\
    & \rev{\tilde{p}^d_{t,i} = \max \{\hat{p}^{d}_{t,i} - \eta_c \eta_d  \hat{p}^{c}_{t,i}, 0\}, }&& \forall t \in \T, \; i \in \N, \label{eq:tilde_pd}\\
    & \tilde{p}^{ls}_{t,i} = \max\{-\hat{p}^c_{t,i} + \hat{p}^d_{t,i} + \hat{p}^{ls}_{t,i} - \hat{p}^{ex}_{t,i} + \tilde{p}^c_{t,i} - \tilde{p}^d_{t,i}, 0 \}, && \forall t \in \T, \; i \in \N,  \label{eq:tilde_pls}\\
    & \tilde{p}^{ex}_{t,i} = \max \{\hat{p}^c_{t,i} - \hat{p}^d_{t,i} - \hat{p}^{ls}_{t,i} + \hat{p}^{ex}_{t,i} - \tilde{p}^c_{t,i} + \tilde{p}^d_{t,i}, 0 \}, && \forall t \in \T, \; i \in \N, \label{eq:tilde_pex}\\
    & \tilde{u}_{t,i} = \mathbbm{1} \left\{ \tilde{p}^c_{t,i}>0\right\},&& \forall t \in \T, \; i \in \N. \label{eq:tilde_u}
\end{align}
\end{subequations}
Obviously, $(\tilde{\bm \theta},\tilde{\bm f},\tilde{\bm p},\tilde{\bm u})$ satisfies all the constraints of \eqref{formulation:regularization_lp} problem. In particular, for each $t \in \T, i \in \N$, the following equality is satisfied from the power balance equation \eqref{eq:power_balance}:
\begin{align}
    - \hat{p}^{c}_{t,i} + \hat{p}^{d}_{t,i} + \hat{p}^{ls}_{t,i} - \hat{p}^{ex}_{t,i} = - \tilde{p}^{c}_{t,i} + \tilde{p}^{d}_{t,i} + \tilde{p}^{ls}_{t,i} - \tilde{p}^{ex}_{t,i}. \label{eq:new_feasible_sol}
\end{align}
\rev{We also note that $\tilde{p}^c_{t,i} \tilde{p}^d_{t,i} = 0$. Suppose otherwise that $\tilde{p}^c_{t,i} > 0$ and $\tilde{p}^d_{t,i} > 0$. The first inequality implies that $\eta_c \eta_d  \hat{p}^{c}_{t,i} > \hat{p}^{d}_{t,i}$ whereas the second inequality implies the opposite, $\hat{p}^{d}_{t,i} > \eta_c \eta_d  \hat{p}^{c}_{t,i}$, a contradiction.}
\rev{Since the generator level is kept the same, that is $\bm{\hat p}^g=\bm{\tilde p}^g$, the generator cost in $c(\bm p)$ do not change, hence we will only compare the portion in $c(\bm p)$ that corresponds to the load shedding and the excess power.}
\rev{Now,} using the fact that $\tilde{p}^{ls}_{t,i}  \tilde{p}^{ex}_{t,i} = 0$ and $\tilde{p}^{c}_{t,i}  \tilde{p}^{d}_{t,i}=0$ for each $t \in \T, i \in \N$, it is sufficient to consider the following four cases.
\begin{enumerate}[label={(Case \arabic*)},leftmargin=*]
    \item When $\tilde{p}^{d}_{t,i}=\tilde{p}^{ls}_{t,i}=0$, the \rev{objective function of \eqref{formulation:regularization_lp} problem excluding the generator cost portion is:}
      \begin{align*}
        \tilde{p}^{ls}_{t,i} + \tilde{p}^{ex}_{t,i} + \lambda_c \tilde{p}^{c}_{t,i} + \lambda_d \tilde{p}^{d}_{t,i} 
            & = \tilde{p}^{ex}_{t,i} + \lambda_c \tilde{p}^{c}_{t,i}.\end{align*}
From condition \eqref{eq:new_feasible_sol}, we have $\tilde{p}^{ex}_{t,i}  = 	 \hat{p}^{c}_{t,i} - \hat{p}^{d}_{t,i} - \hat{p}^{ls}_{t,i} + \hat{p}^{ex}_{t,i} - \tilde{p}^{c}_{t,i} + \tilde{p}^{d}_{t,i} + \tilde{p}^{ls}_{t,i} $. Then, we substitute it for the objective value of \eqref{formulation:regularization_lp} problem, that is,
\begin{align*}
    \tilde{p}^{ex}_{t,i} + \lambda_c \tilde{p}^{c}_{t,i} = \hat{p}^c_{t,i} - \hat{p}^d_{t,i} - \hat{p}^{ls}_{t,i} + \hat{p}^{ex}_{t,i} - \tilde{p}^c_{t,i} + \lambda_c \tilde{p}^{c}_{t,i}.
\end{align*}
Based on the construction in \eqref{eq:tilde_pc} and \eqref{eq:tilde_pd}, together with the presumption $\tilde{p}^{d}_{t,i}=0$, 
\rev{we obtain the following simplification:}
\begin{align*}
   \hat{p}^c_{t,i} - \hat{p}^d_{t,i} - \hat{p}^{ls}_{t,i} + \hat{p}^{ex}_{t,i} - \tilde{p}^c_{t,i} + \lambda_c \tilde{p}^{c}_{t,i} & = \hat{p}^c_{t,i} - \hat{p}^d_{t,i} - \hat{p}^{ls}_{t,i} + \hat{p}^{ex}_{t,i} + (\lambda_c - 1) \left( \hat{p}^{c}_{t,i} - \rev{\frac{\hat{p}^{d}_{t,i}}{\eta_c\eta_d}} \right)\\
     & = - \hat{p}^{ls}_{t,i} + \hat{p}^{ex}_{t,i} + \lambda_c \hat{p}^{c}_{t,i} + \rev{\frac{1-\lambda_c-\eta_c\eta_d}{\eta_c \eta_d}} \hat{p}^{d}_{t,i} \\
     & \leq \hat{p}^{ls}_{t,i} + \hat{p}^{ex}_{t,i} + \lambda_c \hat{p}^{c}_{t,i} + \lambda_d \hat{p}^{d}_{t,i}.
\end{align*}
\rev{The last inequality follows from the assumption $\lambda_c + \eta_c \eta_d \lambda_d \geq 1-\eta_c \eta_d$ along with the fact that $ \eta_c\eta_d > 0$.}
    \item When $\tilde{p}^{d}_{t,i}=\tilde{p}^{ex}_{t,i}=0$, the \rev{objective function of \eqref{formulation:regularization_lp} problem excluding the generator cost portion:}
\begin{align*}
   \tilde{p}^{ls}_{t,i} + \tilde{p}^{ex}_{t,i} + \lambda_c \tilde{p}^{c}_{t,i} + \lambda_d \tilde{p}^{d}_{t,i} & =\tilde{p}^{ls}_{t,i} + \lambda_c \tilde{p}^{c}_{t,i}. 
\end{align*}
From condition \eqref{eq:new_feasible_sol}, we have
\begin{align*}
  \tilde{p}^{ls}_{t,i} + \lambda_c \tilde{p}^{c}_{t,i}  =   - \hat{p}^c_{t,i} + \hat{p}^d_{t,i} + \hat{p}^{ls}_{t,i} - \hat{p}^{ex}_{t,i} + \tilde{p}^c_{t,i} + \lambda_c \tilde{p}^{c}_{t,i}.
\end{align*}
Similarly, based on the construction in \eqref{eq:tilde_pc} and \eqref{eq:tilde_pd}, together with the presumption $\tilde{p}^{d}_{t,i}=0$, \rev{we obtain the following simplification:}
\begin{align*}
  - \hat{p}^c_{t,i} + \hat{p}^d_{t,i} + \hat{p}^{ls}_{t,i} - \hat{p}^{ex}_{t,i} + \tilde{p}^c_{t,i} + \lambda_c \tilde{p}^{c}_{t,i} &  = - \hat{p}^c_{t,i} + \hat{p}^d_{t,i} + \hat{p}^{ls}_{t,i} - \hat{p}^{ex}_{t,i} + (\lambda_c + 1) \left( \hat{p}^{c}_{t,i} - \rev{\frac{\hat{p}^{d}_{t,i}}{\eta_c \eta_d}} \right)\\
  & = \hat{p}^{ls}_{t,i} - \hat{p}^{ex}_{t,i} + \lambda_c \hat{p}^{c}_{t,i} + \left(1- \rev{\frac{1}{\eta_c \eta_d} - \frac{\lambda_c}{\eta_c \eta_d}}\right) \hat{p}^{d}_{t,i} \\
  & \rev{\leq \hat{p}^{ls}_{t,i} + \hat{p}^{ex}_{t,i} + \lambda_c \hat{p}^{c}_{t,i} + \lambda_d \hat{p}^{d}_{t,i}.}
\end{align*}
\rev{
The last inequality follows from the assumption that  $\lambda_c + \eta_c \eta_d \lambda_d \geq 1-\eta_c \eta_d$ which implies $\lambda_d \geq -1 + \frac{1}{\eta_c \eta_d} - \frac{\lambda_c}{\eta_c \eta_d}$ with $\eta_c \eta_d > 0$ and $ \eta_c \leq 1 / \eta_d$ which implies $-1 + \frac{1}{\eta_c \eta_d} \geq 0 \geq 1 -  \frac{1}{\eta_c \eta_d}$.
}
    \item When $\tilde{p}^{c}_{t,i}=\tilde{p}^{ls}_{t,i}=0$, the \rev{objective function of \eqref{formulation:regularization_lp} problem excluding the generator cost portion is:}
    \begin{align*}
       \tilde{p}^{ls}_{t,i} + \tilde{p}^{ex}_{t,i} + \lambda_c \tilde{p}^{c}_{t,i} + \lambda_d \tilde{p}^{d}_{t,i} & =\tilde{p}^{ex}_{t,i} + \lambda_d \tilde{p}^{d}_{t,i}. 
    \end{align*}
From condition \eqref{eq:new_feasible_sol}, we have
\begin{align*}
 \tilde{p}^{ex}_{t,i} + \lambda_d \tilde{p}^{d}_{t,i} 
 =    \hat{p}^c_{t,i} - \hat{p}^d_{t,i} - \hat{p}^{ls}_{t,i} + \hat{p}^{ex}_{t,i} + \tilde{p}^d_{t,i} + \lambda_d \tilde{p}^{d}_{t,i}.
\end{align*}
Based on the construction in \eqref{eq:tilde_pc} and \eqref{eq:tilde_pd}, together with the presumption $\tilde{p}^{c}_{t,i}=0$, \rev{we obtain the following simplification:}
    \begin{align*}
    \hat{p}^c_{t,i} - \hat{p}^d_{t,i} - \hat{p}^{ls}_{t,i} + \hat{p}^{ex}_{t,i} + \tilde{p}^d_{t,i} + \lambda_d \tilde{p}^{d}_{t,i} 
    & = \hat{p}^c_{t,i} - \hat{p}^d_{t,i} - \hat{p}^{ls}_{t,i} + \hat{p}^{ex}_{t,i} + (\lambda_d + 1) (\hat{p}^{d}_{t,i} - \rev{\eta_c \eta_d}  \hat{p}^{c}_{t,i}) \\
     & = - \hat{p}^{ls}_{t,i} + \hat{p}^{ex}_{t,i} + \left(1 - \rev{\eta_c \eta_d} \lambda_d - \rev{\eta_c \eta_d}\right) \hat{p}^c_{t,i} + \lambda_d \hat{p}^{d}_{t,i} \\
     & \rev{ \leq   \hat{p}^{ls}_{t,i} + \hat{p}^{ex}_{t,i} + \lambda_c \hat{p}^{c}_{t,i} + \lambda_d \hat{p}^{d}_{t,i}.}
    \end{align*}
\rev{
The last inequality follows immediately from the assumption that  $\lambda_c + \eta_c \eta_d \lambda_d \geq 1-\eta_c \eta_d$.
}
    \item When $\tilde{p}^{c}_{t,i}=\tilde{p}^{ex}_{t,i}=0$, the \rev{objective function of \eqref{formulation:regularization_lp} problem excluding the generator cost portion is:} 
    \begin{align*}
    \tilde{p}^{ls}_{t,i} + \tilde{p}^{ex}_{t,i} + \lambda_c \tilde{p}^{c}_{t,i} + \lambda_d \tilde{p}^{d}_{t,i} & =\tilde{p}^{ls}_{t,i} + \lambda_d \tilde{p}^{d}_{t,i}.
    \end{align*}
From condition \eqref{eq:new_feasible_sol}, we have
\begin{align*}
  \tilde{p}^{ls}_{t,i} + \lambda_d \tilde{p}^{d}_{t,i} =  - \hat{p}^c_{t,i} + \hat{p}^d_{t,i} + \hat{p}^{ls}_{t,i} - \hat{p}^{ex}_{t,i} - \tilde{p}^d_{t,i} + \lambda_d \tilde{p}^{d}_{t,i}.
\end{align*}
Similarly, based on the construction in \eqref{eq:tilde_pc} and \eqref{eq:tilde_pd}, together with the presumption $\tilde{p}^{c}_{t,i}=0$, \rev{we obtain the following simplification:}
\begin{align*}
- \hat{p}^c_{t,i} + \hat{p}^d_{t,i} + \hat{p}^{ls}_{t,i} - \hat{p}^{ex}_{t,i} - \tilde{p}^d_{t,i} + \lambda_d \tilde{p}^{d}_{t,i} & = - \hat{p}^c_{t,i} + \hat{p}^d_{t,i} + \hat{p}^{ls}_{t,i} - \hat{p}^{ex}_{t,i} + (\lambda_d - 1) (\hat{p}^{d}_{t,i} - \rev{\eta_c \eta_d} \hat{p}^{c}_{t,i})\\
 & = \hat{p}^{ls}_{t,i} - \hat{p}^{ex}_{t,i} + \left(- \rev{\eta_c \eta_d} \lambda_d + \rev{\eta_c \eta_d} - 1 \right) \hat{p}^c_{t,i} +  \lambda_d \hat{p}^{d}_{t,i} \\
 & \rev{\leq  \hat{p}^{ls}_{t,i} + \hat{p}^{ex}_{t,i} + \lambda_c \hat{p}^{c}_{t,i} + \lambda_d \hat{p}^{d}_{t,i}. }
\end{align*}
\rev{
The last inequality follows from the assumption that  $\lambda_c + \eta_c \eta_d \lambda_d \geq 1-\eta_c \eta_d$ which implies $\lambda_c \geq - \eta_c \eta_d \lambda_d + 1-\eta_c \eta_d$ and $ \eta_c \leq 1 / \eta_d$ which implies $1-\eta_c \eta_d \geq 0 \geq \eta_c \eta_d - 1$.
}
\end{enumerate}
Since above four cases hold for each $t \in \T$ and $i \in \N$, we have 
\rev{
\begin{align*}
    c(\tilde{\bm p}) + \bm \lambda^\top g(\tilde{\bm p}) & = c^g(\bm{\tilde  p^{g}}) + \sum_{t \in \mathcal{T}} \sum_{i \in \mathcal{N}} \left(\tilde{p}^{ls}_{t,i} + \tilde{p}^{ex}_{t,i}\right) + \lambda_c \sum_{t \in \mathcal{T}} \sum_{i \in \mathcal{N}} \tilde{p}^{c}_{t,i} + \lambda_d \sum_{t \in \mathcal{T}} \sum_{i \in \mathcal{N}} \tilde{p}^{d}_{t,i}  \\
    & \le c^g(\bm{\hat p^{g}}) + \sum_{t \in \mathcal{T}} \sum_{i \in \mathcal{N}} \left(\hat{p}^{ls}_{t,i} + \hat{p}^{ex}_{t,i}\right) + \lambda_c \sum_{t \in \mathcal{T}} \sum_{i \in \mathcal{N}} \hat{p}^{c}_{t,i} + \lambda_d \sum_{t \in \mathcal{T}} \sum_{i \in \mathcal{N}} \hat{p}^{d}_{t,i}  \\
    & = c(\hat{\bm p}) + \bm \lambda^\top g(\hat{\bm p}).
\end{align*}
}
This completes the proof.
\QEDA
\end{proof}

\Cref{theorem:ip=lp_condition} demonstrates the equivalence between the optimal objective function value of \eqref{formulation:regularization} problem and \eqref{formulation:regularization_lp} problem under regularization (for sufficiently high penalty) and the assumption that $E^{\min}_c=E^{\min}_d=0$. Note that this assumption is standard and appears in many recent works, such as \citet{pozo2022lpmodel} and \citet{kody2022}. The technique to devise an integral solution by perturbing the charge and discharge levels has been used in the context of optimizing for a single solar-battery storage system in \cite{singh2021lagrangian}. 

Theorem~\ref{theorem:ip=lp_condition} easily leads to the following Corollary.
\begin{corollary}
\label{corollary:fori=freg}
    When \rev{$\eta_c=\eta_d=1$}
    for any $\bm \lambda \ge \bm 0$, \eqref{formulation:regularization} problem and \eqref{formulation:regularization_lp} problem are equivalent. In particular, when $\bm \lambda = \bm 0$, \eqref{formulation:original_mip} problem, \eqref{formulation:regularization} problem, and \eqref{formulation:regularization_lp} problem are all equivalent, i.e., $ \rev{z^{\mathrm{ori}}} = \rev{z^{\mathrm{reg}}(\bm 0)}=\rev{z_l^{\mathrm{reg}}(\bm 0)}$.
\end{corollary}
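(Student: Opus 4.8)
The plan is to derive both equivalences directly from \Cref{theorem:ip=lp_condition}, exploiting the fact that the hypothesis $\eta_c = \eta_d = 1$ renders that theorem's penalty condition vacuous, so no genuine new argument is needed.

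First I would substitute $\eta_c = \eta_d = 1$ into the sufficient condition $\lambda_c + \eta_c \eta_d \lambda_d \geq 1 - \eta_c \eta_d$ of \Cref{theorem:ip=lp_condition}. Since $\eta_c \eta_d = 1$, the right-hand side becomes $0$ and the left-hand side becomes $\lambda_c + \lambda_d$, so the condition collapses to $\lambda_c + \lambda_d \geq 0$. Every $\bm \lambda = (\lambda_c, \lambda_d)^\top \geq \bm 0$ satisfies this automatically, so (under the theorem's standing hypothesis $E_c^{\min} = E_d^{\min} = 0$) \Cref{theorem:ip=lp_condition} applies for all nonnegative $\bm \lambda$, yielding $z^{\mathrm{reg}}(\bm \lambda) = z_l^{\mathrm{reg}}(\bm \lambda)$. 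This is exactly the first equivalence between \eqref{formulation:regularization} and \eqref{formulation:regularization_lp}.

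Next, for the specialization $\bm \lambda = \bm 0$, I would establish the three-way equivalence by chaining two observations. Instantiating the first part with $\bm \lambda = \bm 0$ gives $z^{\mathrm{reg}}(\bm 0) = z_l^{\mathrm{reg}}(\bm 0)$ at once, so it only remains to identify $z^{\mathrm{ori}}$ with $z^{\mathrm{reg}}(\bm 0)$. For this I would note that when $\bm \lambda = \bm 0$ the regularizer $\bm \lambda^\top g(\bm p)$ vanishes identically, so the objective of \eqref{formulation:regularization} reduces to $c(\bm p)$, which is precisely the objective of \eqref{formulation:original_mip}. Moreover, the two problems share the same constraints \eqref{eq:generation_limits}--\eqref{eq:power_balance} and the same integrality requirement $\bm u \in \{0,1\}^{T \times N}$, so they are literally the same optimization problem; hence $z^{\mathrm{ori}} = z^{\mathrm{reg}}(\bm 0)$. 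Combining the two displays gives $z^{\mathrm{ori}} = z^{\mathrm{reg}}(\bm 0) = z_l^{\mathrm{reg}}(\bm 0)$.

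I do not anticipate any substantive obstacle, as this is a direct corollary rather than an independent result. The only points meriting care are verifying that the penalty condition genuinely trivializes to $\lambda_c + \lambda_d \geq 0$ (so that it holds for \emph{every} feasible $\bm \lambda$, not merely becomes weaker) and observing that at $\bm \lambda = \bm 0$ the regularized objective collapses exactly onto the original objective. Both are immediate once the substitution $\eta_c = \eta_d = 1$ is carried out.
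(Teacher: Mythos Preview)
Your proposal is correct and matches the paper's approach: the paper simply remarks that \Cref{theorem:ip=lp_condition} ``easily leads to'' this corollary without spelling out a proof, and your argument is precisely the intended specialization (the penalty condition becomes $\lambda_c+\lambda_d\ge 0$, trivially satisfied, and at $\bm\lambda=\bm 0$ the regularized objective coincides with the original). Your parenthetical about the standing hypothesis $E_c^{\min}=E_d^{\min}=0$ is also appropriate, since the corollary inherits it from \Cref{theorem:ip=lp_condition}.
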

\rev{Therefore, when efficiencies for both charging and discharging are 1 (i.e., the battery is lossless and $\eta_c=\eta_d=1$), relaxing the integrality of battery operations is exact.}
However, as \rev{such} does not occur in practice, most literature concerning battery operation bases numerical experiments with \rev{$\eta_c, \eta_d < 1$.}

Finally, we remark that the two assumptions of \Cref{theorem:ip=lp_condition}, (i.e., $E_c^{\min} = E_d^{\min}=0$ and 
\rev{$\lambda_c + \eta_c \eta_d \lambda_d \geq 1-\eta_c \eta_d$}
 are the best that we may expect for the equivalence of the optimal objective function value of the regularized MIP model and its LP relaxation. The following two examples illustrate that \eqref{formulation:regularization} problem and \eqref{formulation:regularization_lp} problem do not have the same optimal objective function value if either of these two assumptions in \Cref{theorem:ip=lp_condition} is violated.

\begin{example}\rm
\label{ex:E_min_not_zero}
\textit{($\rev{z^{\mathrm{reg}}(\bm \lambda)} \neq \rev{z_l^{\mathrm{reg}}(\bm \lambda)}$ when $E^{\min}_c,\;E^{\max} > 0$)} Consider a simple network with $\N=\{1,2\}$, $\T=\{1,2\}$, $\L=\{(1,2)\}$. 
Suppose one battery is placed at node $2$ (there is no battery placed at node $1$) with $\N_b=\{2\}$ and $E_c^{\min} = E_d^{\min}=\tau$, $E_c^{\max} = E_d^{\max}=2$, $E^{\min}=0$, $E^{\max}=4$, $E_0=0$, and 
\rev{$\eta_c = \eta_d =1/2$. }
Assume each node has one generator with $G_1^{\min}=G_2^{\min} =2$ and $G_1^{\max}=G_2^{\max} = 4$
\rev{and generator cost $c^g(\bm p^g) = 0$}. 
We further assume $-4\leq f_{12} \leq 4$ and the demand is $D_{1,1}=2,\;D_{1,2}=4,\;D_{2,1}=6,\;D_{2,2}=4$. 
Without loss of generality, we assume that the Ohm's law constraint \eqref{eq:dc_power_flow} is satisfied.
When $\bm \lambda=(3/5,\;3/5)^\top$, for any $\tau\in(0,1/2]$, an optimal solution \rev{to} \eqref{formulation:regularization} problem is \begin{align*}
    & \hat{p}^c_{1,1} =0,\;\hat{p}^c_{1,2} =4 \tau,\;\hat{p}^c_{2,1}=0,\;\hat{p}^c_{2,2}=0, \\
    & \hat{p}^d_{1,1} =0,\;\hat{p}^d_{1,2} =0,\;\hat{p}^d_{2,1}=0,\;\hat{p}^d_{2,2}=\tau, \\
       & \hat{p}^{ls}_{1,1} =0,\;\hat{p}^{ls}_{1,2} =0,\;\hat{p}^{ls}_{2,1}=0,\;\hat{p}^{ls}_{2,2}=2-\tau, \\
            & \hat{p}^{ex}_{1,1} =0,\;\hat{p}^{ex}_{1,2} =0,\;\hat{p}^{ex}_{2,1}=0,\;\hat{p}^{ex}_{2,2}=0, \\  
    & \hat{u}_{1,1} =0,\;\hat{u}_{1,2}=1,\;\hat{u}_{2,1}=0,\;\hat{u}_{2,2}=0, 
\end{align*}
with the optimal objective value $\hat{v}=2+2\tau$.

While an optimal solution \rev{to} the corresponding \eqref{formulation:regularization_lp} problem is 
\begin{align*}
    & \tilde{p}^c_{1,1} =0,\;\tilde{p}^c_{1,2}=\tau,\;\tilde{p}^c_{2,1}=0,\;\tilde{p}^c_{2,2}=\frac{3\tau}{5}, \\ 
    & \tilde{p}^d_{1,1} =0,\;\tilde{p}^d_{1,2}=0,\;\tilde{p}^d_{2,1}=0,\;\tilde{p}^d_{2,2}=\frac{2\tau}{5}, \\
    & \tilde{p}^{ls}_{1,1} =0,\;\tilde{p}^{ls}_{1,2} =0,\;\tilde{p}^{ls}_{2,1}=0,\;\tilde{p}^{ls}_{2,2}=2+\frac{\tau}{5}, \\
    & \tilde{p}^{ex}_{1,1} =0,\;\tilde{p}^{ex}_{1,2} =0,\;\tilde{p}^{ex}_{2,1}=0,\;\tilde{p}^{ex}_{2,2}=0, \\  
 &\tilde{u}_{1,1}=0, \tilde{u}_{1,2}=1, \tilde{u}_{2,1}=0, \tilde{u}_{2,2}=\frac{3}{5}, 
\end{align*}
with the optimal objective value $\tilde{v}=2+7\tau/5$.  Therefore, $\tilde{v} <  \hat{v}$ for all $\tau\in (0,1/2]$. Hence, \eqref{formulation:regularization} problem and \eqref{formulation:regularization_lp} problem are not equivalent.
    \QEDB
\end{example}

\begin{example}\rm
\label{ex:ambda_less_than}
\textit{($\rev{z^{\mathrm{reg}}(\bm \lambda)} \neq \rev{z_l^{\mathrm{reg}}(\bm \lambda)}$ when 
\rev{$\lambda_c + \eta_c \eta_d \lambda_d < 1-\eta_c \eta_d$}
)} Consider the same network as that in Example \ref{ex:E_min_not_zero} but with different battery configurations and demands. One battery is placed at node $2$ (i.e., $\N_b=\{2\}$) with $E_c^{\min} = E_d^{\min}=0, E_c^{\max} = E_d^{\max}=1$, $E^{\min}=0$, $E^{\max}=6$, $E_0=6$, and \rev{$\eta_c^2=\eta_d^2=1/3$}. 
Demand is $D_{1,1}=1,\;D_{1,2}=2,\;D_{2,1}=4,\;D_{2,2}=8$.
Other parameters remain the same.
We assume that the Ohm's law constraint \eqref{eq:dc_power_flow} is satisfied.
Let $\bm \lambda=(\tau,\;\tau)^\top$ for any $\tau\in[0,1/2)$. An optimal solution \rev{to} \eqref{formulation:regularization} problem is 
\begin{align*}
    & \hat{p}^c_{1,1} =0,\;\hat{p}^c_{1,2} =0,\;\hat{p}^c_{2,1}=0,\;\hat{p}^c_{2,2}=0, \\
    & \hat{p}^d_{1,1} =0,\;\hat{p}^d_{1,2} =0,\;\hat{p}^d_{2,1}=0,\;\hat{p}^d_{2,2}=1, \\
    & \hat{p}^{ls}_{1,1} =0,\;\hat{p}^{ls}_{1,2} =0,\;\hat{p}^{ls}_{2,1}=0,\;\hat{p}^{ls}_{2,2}=3, \\
    & \hat{p}^{ex}_{1,1} =1,\;\hat{p}^{ex}_{1,2} =0,\;\hat{p}^{ex}_{2,1}=0,\;\hat{p}^{ex}_{2,2}=0, \\  
    & \hat{u}_{1,1} =0, \; \hat{u}_{1,2}=0, \; \hat{u}_{2,1}=0, \; \hat{u}_{2,2}=0, 
\end{align*}
with the optimal objective value $\hat{v}=4+\tau$.

While an optimal solution \rev{to} the corresponding \eqref{formulation:regularization_lp} problem is 
\begin{align*}
 & \tilde{p}^c_{1,1} =0,\tilde{p}^c_{1,2}=3/4,\tilde{p}^c_{2,1}=0,\tilde{p}^c_{2,2}=0, \\ & \tilde{p}^d_{1,1} =0,\tilde{p}^d_{1,2}=1/4,\tilde{p}^d_{2,1}=0,\tilde{p}^d_{2,2}=1, \\
    & \tilde{p}^{ls}_{1,1} =0,\tilde{p}^{ls}_{1,2} =0,\tilde{p}^{ls}_{2,1}=3,\tilde{p}^{ls}_{2,2}=0, \\
    & \tilde{p}^{ex}_{1,1} =0,\tilde{p}^{ex}_{1,2} =1/2,\tilde{p}^{ex}_{2,1}=0,\tilde{p}^{ex}_{2,2}=0, \\  &\tilde{u}_{1,1}=0, \tilde{u}_{1,2}=3/4, \tilde{u}_{2,1}=0, \tilde{u}_{2,2}=0, 
\end{align*}
with the optimal objective value $\tilde{v}=7/2+2\tau$.  Therefore, $\tilde{v} <  \hat{v}$ for all $\tau\in[0,1/2)$. Hence, \eqref{formulation:regularization} problem and \eqref{formulation:regularization_lp} problem are not equivalent.
    \QEDB
\end{example}

In this section, we have shown that \eqref{formulation:regularization} problem is easy to solve, since there is no integrality gap between this problem and its linear programming relaxation. In the next two subsections, we begin to analyze the relationship between \eqref{formulation:regularization} problem and \eqref{formulation:original_mip} problem. 
We would like to understand the differences between the optimal solutions and corresponding objective function values of these two problems both qualitatively and quantitatively. 

\subsection{Structural Properties of the Regularized MIP Model}
In this section, we provide some structural properties of \eqref{formulation:regularization} problem. First, we baseline the value of $\bm \lambda$. Next, we present a two-part result on the structure of an optimal solution \rev{to} the \eqref{formulation:regularization} problem as a function of the penalty coefficients $\bm \lambda$ that distinguishes \eqref{formulation:original_mip} problem and \eqref{formulation:regularization} problem.
\subsubsection{Baselining the value of \texorpdfstring{$\bm{\lambda}$}{Lg}.}
We begin with the following standard observation from linear programming applied to the convex hull of the feasible region of the \eqref{formulation:regularization}.
\begin{remark}\label{lemma_f_reg_increasing}
Function $\rev{z^{\mathrm{reg}}(\bm \lambda)}$ is concave and monotone nondecreasing with respect to $\bm \lambda\in \Re_+^2$. 
\end{remark}

As $\bm \lambda$ gets larger, \eqref{formulation:regularization} problem gets more-and-more ``different" from 
\eqref{formulation:original_mip} problem. In particular, we expect that the battery to be used less, since it now costs more to charge or discharge. However, what is a ``reasonable" value of $\bm \lambda$?  Our first result below allows us to baseline the value of $\bm \lambda$, by showing that if both components of $\bm \lambda$ are equal to $1$ or higher, then \eqref{formulation:regularization} problem effectively solves the problem with no batteries placed in the network.

\begin{restatable}{proposition}{propositionlambdabiggerthan1}\label{proposition:lambda_bigger_than_1}
For any $\bm \lambda \ge \e$, we have $\rev{z^{\mathrm{reg}}(\bm \lambda)} = \rev{z^{\mathrm{nb}}}$ where 
\begin{align}
    \rev{z^{\mathrm{nb}}} =  \min_{\bm \theta, \bm f,\bm p,\bm u} \left\{ c(\bm p) \colon \eqref{eq:generation_limits}- \eqref{eq:power_balance},\; \bm u \in \{0,1\}^{T \times N},\;\bm p^c = \bm p^d =\bm 0 \right\} .\label{formulation:no_battery}
\end{align}
\end{restatable}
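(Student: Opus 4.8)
The plan is to prove the equality $z^{\mathrm{reg}}(\bm\lambda)=z^{\mathrm{nb}}$ by establishing the two inequalities separately. The inequality $z^{\mathrm{reg}}(\bm\lambda)\le z^{\mathrm{nb}}$ is immediate: the feasible region of \eqref{formulation:no_battery} is precisely that of \eqref{formulation:regularization} intersected with $\{\bm p^c=\bm p^d=\bm 0\}$, and on this set $g(\bm p)=\bm 0$, so the regularized objective $c(\bm p)+\bm\lambda^\top g(\bm p)$ collapses to $c(\bm p)$. Minimizing the regularized objective over the larger region therefore yields a value no larger than minimizing $c(\bm p)$ over the restricted region.

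The substance is the reverse inequality $z^{\mathrm{reg}}(\bm\lambda)\ge z^{\mathrm{nb}}$, which I would prove by a ``turn off the battery'' construction analogous in spirit to the perturbation in the proof of \Cref{theorem:ip=lp_condition}, but now zeroing out both charge and discharge simultaneously. Starting from an optimal solution $(\hat{\bm\theta},\hat{\bm f},\hat{\bm p},\hat{\bm u})$ of \eqref{formulation:regularization}, I keep $\tilde{\bm\theta}=\hat{\bm\theta}$, $\tilde{\bm f}=\hat{\bm f}$, $\tilde{\bm p}^g=\hat{\bm p}^g$ unchanged, set $\tilde{\bm p}^c=\tilde{\bm p}^d=\bm 0$, and reroute the battery's net injection into the slack variables. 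Concretely, writing $\Delta_{t,i}:=\hat p^{ls}_{t,i}-\hat p^{ex}_{t,i}-\hat p^c_{t,i}+\hat p^d_{t,i}$, I set $\tilde p^{ls}_{t,i}=\max\{\Delta_{t,i},0\}$ and $\tilde p^{ex}_{t,i}=\max\{-\Delta_{t,i},0\}$, so that $\tilde p^{ls}_{t,i}-\tilde p^{ex}_{t,i}=\Delta_{t,i}$ and the power balance \eqref{eq:power_balance} is preserved with the same flows and generation.

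The objective comparison is then a clean triangle-inequality estimate done node by node. Since all the charge, discharge, and slack quantities are nonnegative, $\tilde p^{ls}_{t,i}+\tilde p^{ex}_{t,i}=|\Delta_{t,i}|\le \hat p^{ls}_{t,i}+\hat p^{ex}_{t,i}+\hat p^c_{t,i}+\hat p^d_{t,i}$, and because $\lambda_c,\lambda_d\ge 1$ the right-hand side is bounded by $\hat p^{ls}_{t,i}+\hat p^{ex}_{t,i}+\lambda_c\hat p^c_{t,i}+\lambda_d\hat p^d_{t,i}$. Summing over $t\in\T$, $i\in\N$ and adding the unchanged generator cost gives $c(\tilde{\bm p})\le c(\hat{\bm p})+\bm\lambda^\top g(\hat{\bm p})=z^{\mathrm{reg}}(\bm\lambda)$. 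As $\tilde{\bm p}$ is feasible for \eqref{formulation:no_battery}, we conclude $z^{\mathrm{nb}}\le c(\tilde{\bm p})\le z^{\mathrm{reg}}(\bm\lambda)$, and combining the two directions finishes the proof.

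The one place requiring care, and the step I expect to be the main obstacle, is verifying that $\tilde{\bm p}$ is genuinely feasible for \eqref{formulation:no_battery}. Setting $\tilde{\bm p}^c=\tilde{\bm p}^d=\bm 0$ forces the state-of-charge in \eqref{eq:battery_storage} to remain constant at $E_0$, so the bound \eqref{eq:battery_ps_limits} reduces to $E^{\min}\le E_0\le E^{\max}$, which is the standing feasibility assumption on the initial charge; and the bounds \eqref{eq:battery_pc_limits}--\eqref{eq:battery_pd_limits} with zero charge and discharge are satisfiable by choosing $\tilde u_{t,i}$ freely only because $E_c^{\min}=E_d^{\min}=0$. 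I would flag these assumptions explicitly, since without $E_c^{\min}=E_d^{\min}=0$ the no-battery solution need not even be feasible, and the argument would break down exactly there.
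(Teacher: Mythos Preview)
Your proof is correct and follows essentially the same approach as the paper: both directions are argued identically, and your ``turn off the battery'' construction with $\tilde p^{ls}=\max\{\Delta,0\}$, $\tilde p^{ex}=\max\{-\Delta,0\}$ is exactly the paper's construction. Your single-line triangle-inequality estimate $|\Delta_{t,i}|\le \hat p^{ls}_{t,i}+\hat p^{ex}_{t,i}+\hat p^c_{t,i}+\hat p^d_{t,i}$ is cleaner than the paper's four-case split (which just verifies the same bound case by case), and your explicit flagging of the $E_c^{\min}=E_d^{\min}=0$ and $E^{\min}\le E_0\le E^{\max}$ requirements for feasibility of the no-battery solution is a point the paper's proof leaves implicit.
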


\begin{proof}
Observe that the problem corresponding to $\rev{z^{\mathrm{nb}}}$ is obtained by restricting the feasible region of $\rev{z^{\mathrm{reg}}(\bm \lambda)}$ to $\bm p^c = \bm p^d = \bm 0$.
Thus, we have
\begin{align*}
    \rev{z^{\mathrm{nb}}} \ge \rev{z^{\mathrm{reg}}(\bm \lambda)}, \quad \forall \bm \lambda \geq \e.
\end{align*}
To show the opposite inequality, it is sufficient to show that there is an optimal solution for $\rev{z^{\mathrm{reg}}(\e)}$ such that $\bm p^c = \bm p^d = \bm 0$, since we have  $\rev{z^{\mathrm{reg}}(\bm \lambda)} \ge \rev{z^{\mathrm{reg}}(\e)}$ for all $\bm \lambda \ge \e$ from \Cref{lemma_f_reg_increasing}.
We use the power balance equation  \eqref{eq:power_balance} written in the following form
\begin{align*}
    \sum_{j \in \delta^{+}_{i}} f_{t,ij} - \sum_{j \in \delta^-_{i}} f_{t,ji} - p^{g}_{t,i} + D_{t,i} = - p^{c}_{t,i} + p^{d}_{t,i} + p^{ls}_{t,i} - p^{ex}_{t,i}, && \forall  t \in \T, i \in \N.
\end{align*}
To construct such a feasible solution from the current optimal solution. Let ($\hat{\bm \theta}, \hat{\bm f},\hat{\bm p},\hat{\bm u}$) be an optimal solution \rev{to} \eqref{formulation:regularization} problem. We can construct another feasible solution ($\tilde{\bm \theta},\tilde{\bm f},\tilde{\bm p},\tilde{\bm u}$) as follows:
\begin{align*}
    & \tilde{\bm \theta} = \hat{\bm \theta}, \; \tilde{\bm f} = \hat{\bm f}, \; \tilde{\bm p}^g = \hat{\bm p}^g, \; \tilde{\bm p}^s = E^0\e, \; \tilde{\bm p}^c = \bm 0, \; \tilde{\bm p}^d = \bm 0,\\
    & \tilde{p}^{ls}_{t,i} = \max \{-\hat{p}^c_{t,i} + \hat{p}^d_{t,i} + \hat{p}^{ls}_{t,i} - \hat{p}^{ex}_{t,i}, 0\}, && \forall  t \in \T, i \in \N,  \\
    & \tilde{p}^{ex}_{t,i} = \max \{\hat{p}^c_{t,i} - \hat{p}^d_{t,i} - \hat{p}^{ls}_{t,i} + \hat{p}^{ex}_{t,i}, 0\}, && \forall t \in \T, \; i \in \N.
\end{align*}
Then, using the fact that $p^{ls}_{t,i}  p^{ex}_{t,i} = 0$ and $p^{c}_{t,i}  p^{d}_{t,i}=0$ for each $ t \in \T, i \in \N$, we consider the following four cases.
\begin{enumerate}[label={(Case \arabic*)},leftmargin=*]
    \item When $\hat{p}^{d}_{t,i}=\hat{p}^{ls}_{t,i}=0$, $\tilde{p}^{ls}_{t,i}=0$ and $\tilde{p}^{ex}_{t,i}=\hat{p}^c_{t,i} + \hat{p}^{ex}_{t,i}$. Hence, we have
    \begin{align*}
        & \tilde{p}^{ls}_{t,i} + \tilde{p}^{ex}_{t,i}=\hat{p}^c_{t,i} + \hat{p}^{ex}_{t,i}=\hat{p}^{ls}_{t,i} + \hat{p}^{ex}_{t,i} + \hat{p}^c_{t,i} + \hat{p}^d_{t,i}.
    \end{align*}
    \item When $\hat{p}^{d}_{t,i}=\hat{p}^{ex}_{t,i}=0$, $\tilde{p}^{ls}_{t,i} + \tilde{p}^{ex}_{t,i}=|\hat{p}^{ls}_{t,i}-\hat{p}^{c}_{t,i}|$ and we have
    \begin{align*}
        & \tilde{p}^{ls}_{t,i} + \tilde{p}^{ex}_{t,i}
        = |\hat{p}^{ls}_{t,i}-\hat{p}^{c}_{t,i}|
        \le \hat{p}^{ls}_{t,i} + \hat{p}^{ex}_{t,i} + \hat{p}^c_{t,i} + \hat{p}^d_{t,i}. 
    \end{align*}
    \item When $\hat{p}^{c}_{t,i}=\hat{p}^{ls}_{t,i}=0$, $\tilde{p}^{ls}_{t,i} + \tilde{p}^{ex}_{t,i}=|\hat{p}^{ex}_{t,i}-\hat{p}^{d}_{t,i}|$ and we have
    \begin{align*}
        & \tilde{p}^{ls}_{t,i} + \tilde{p}^{ex}_{t,i}
        = |\hat{p}^{ex}_{t,i}-\hat{p}^{d}_{t,i}|
        \le \hat{p}^{ls}_{t,i} + \hat{p}^{ex}_{t,i} + \hat{p}^c_{t,i} + \hat{p}^d_{t,i}.
    \end{align*}
    \item When $\hat{p}^{c}_{t,i}=\hat{p}^{ex}_{t,i}=0$, $\tilde{p}^{ex}_{t,i}=0$ and $\tilde{p}^{ls}_{t,i}=\hat{p}^d_{t,i} + \hat{p}^{ls}_{t,i}$. Hence, we have
    \begin{align*}
        & \tilde{p}^{ls}_{t,i} + \tilde{p}^{ex}_{t,i}=\hat{p}^d_{t,i} + \hat{p}^{ls}_{t,i}=\hat{p}^{ls}_{t,i} + \hat{p}^{ex}_{t,i} + \hat{p}^c_{t,i} + \hat{p}^d_{t,i}.
    \end{align*}
\end{enumerate}
Since the above four cases hold for all $t \in \T, i \in \N$, we have 
\rev{
\begin{align*}
    c(\tilde{\bm p}) + g(\tilde{\bm p}) 
    & =   c^g(\bm{\tilde p^g}) + \sum_{t \in \mathcal{T}} \sum_{i \in \mathcal{N}} \left(\tilde{p}^{ls}_{t,i} + \tilde{p}^{ex}_{t,i} \right) \\
    & \le c^g(\bm{\hat p^g}) + \sum_{t \in \mathcal{T}} \sum_{i \in \mathcal{N}} \left( \hat{p}^{ls}_{t,i} + \hat{p}^{ex}_{t,i} + \hat{p}^{c}_{t,i} + \hat{p}^{d}_{t,i} \right) = c(\hat{\bm p}) + g(\hat{\bm p}).
\end{align*}
}
Therefore, ($\tilde{\bm \theta},\tilde{\bm f},\tilde{\bm p},\tilde{\bm u}$) is an optimal solution \rev{to} \eqref{formulation:regularization} problem. This completes the proof.
\QEDA
\end{proof}

We remark that for general values of $\bm \lambda$, the result of \Cref{proposition:lambda_bigger_than_1}, that is \rev{$z^{\textrm{nb}} =z^{\textrm{reg}}(\bm \lambda)$} may not hold. The following example shows that the battery may always be used when $\bm\lambda \in (0,1)^2$. 

\begin{example}\rm
\label{ex:pcpd_always_positive}
\textit{($\rev{z^{\mathrm{reg}}(\bm \lambda) \neq z^{\mathrm{nb}}}$ when $\bm \lambda \in (0,1)^2$)} We consider the same network as that in Example~\ref{ex:E_min_not_zero} but with different battery configurations and demands.
One battery is placed at node $2$ (i.e., $\N_b=\{2\}$) with $E_c^{\min} = E_d^{\min}=0, E_c^{\max} = E_d^{\max}=2$, $E^{\min}=0$, $E^{\max}=20$, $E_0=20$ and \rev{$\eta_c = \eta_d =1$}. 
Demand is $D_{1,1}=D_{1,2}=D_{2,1}=D_{2,2}=5$.
Other parameters remain the same.
When $\bm \lambda\in(0,1)^2$, at optimality of \eqref{formulation:regularization} problem, we always have
\begin{align*}
    p^d_{1,1}=   p^d_{2,1} =2 >0.
\end{align*}
This demonstrates that when violating the condition in \Cref{proposition:lambda_bigger_than_1}, i.e., $\bm \lambda \in (0,1)^2$, it is possible that either $\bm p^c$ or $\bm p^d$ is always positive.
    \QEDB
\end{example}

\subsubsection{Structural properties of optimal solutions of Regularized MIP.}
The next two-part result on the structure of an optimal solution \rev{to} \eqref{formulation:regularization} problem as a function of the penalty coefficients $\bm \lambda$ shows the distinction between  \eqref{formulation:original_mip} problem and \eqref{formulation:regularization} problem as a function of $\bm \lambda$.

In the first part, we show that $p^c_{t,i} p^{ls}_{t,i} = 0$ for all $ t \in \T,\;i \in \N$ holds for optimal solutions for all values of $\bm \lambda$, that is, this property is true for both \eqref{formulation:original_mip} problem and \eqref{formulation:regularization} problem. Intuitively, this property holds because when the system is incurring load shedding ($p^{ls}_{t,i} > 0$), it would not create additional load shedding by charging a battery ($p^{c}_{t,i} > 0$). Similar to the result above, we may expect that when there is excess power ($p^{ex}_{t,i} > 0$), the amount of discharge would not be positive ($p^{d}_{t,i} = 0$), as a positive discharge amount would further increase excess power. It is reasonable to expect $p^d_{t,i} p^{ex}_{t,i} = 0$ for all $ t \in \T,\;i \in \N$. In the second part of our result, we demonstrate that this condition only holds when $\bm \lambda$ is sufficiently large. Indeed, it turns out that when $\bm \lambda = \bm 0$, specifically in considering the \eqref{formulation:original_mip} problem, the aforementioned condition might be violated, i.e., $p^d_{t,i} p^{ex}_{t,i} > 0$ for some $ t \in \T,\;i \in \N$.

 \begin{restatable}{theorem}{thmstructure}\label{thm:structure}
Suppose $E^{\min}_c = E^{\min}_d = 0$. Let $\bm p$ be an optimal solution to \eqref{formulation:regularization} problem.  Then:
    \begin{itemize}
    \item[(i)] For all $\bm \lambda \in \mathbb{R}^2_+$, we have $p^c_{t,i} p^{ls}_{t,i} = 0$ for all $ t \in \T,\;i \in \N$.
    \item[(ii)] If \rev{$\lambda_c + \eta_c \eta_d \lambda_d > 1-\eta_c \eta_d$}, then we have $p^d_{t,i} p^{ex}_{t,i} = 0$ for all $t \in \T,\;i \in \N$. 
    \end{itemize}
\end{restatable}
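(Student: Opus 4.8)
The plan is to prove both parts by a local exchange argument: assuming an optimal solution $\bm p$ of \eqref{formulation:regularization} violates the claimed complementarity at some $(t^*,i^*)$, I construct a feasible solution with no larger objective (and, for (ii), a strictly smaller one), contradicting optimality. I would first note that any feasible point of \eqref{formulation:regularization} already satisfies $p^c_{t,i}p^d_{t,i}=0$ because of \eqref{eq:battery_pc_limits}--\eqref{eq:battery_pd_limits}, so charging and discharging times can be treated separately, and in each perturbation only the slack variables $p^{ls},p^{ex}$ and the state of charge need to move while $\bm\theta,\bm f,\bm p^g$ stay fixed.

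For part (i), suppose $p^c_{t^*,i^*}>0$ and $p^{ls}_{t^*,i^*}>0$. The natural move is to lower the charging amount and the load shedding at $(t^*,i^*)$ simultaneously by a common $\delta>0$, which keeps \eqref{eq:power_balance} at $(t^*,i^*)$ satisfied. The only feasibility concern is that, by \eqref{eq:battery_storage}, reducing $p^c_{t^*,i^*}$ lowers $p^s_{t,i^*}$ by $\eta_c\delta$ for every $t\ge t^*$, possibly violating the lower bound in \eqref{eq:battery_ps_limits}. I would resolve this as follows: if the battery is never discharged after $t^*$, then $p^s_{\cdot,i^*}$ is nondecreasing on $[t^*,T]$ and stays above $E^{\min}$ for small $\delta$, so no compensation is needed; otherwise, letting $t'$ be the first discharge time after $t^*$, I additionally reduce $p^d_{t',i^*}$ by the efficiency-matched amount $\eta_c\eta_d\,\delta$, which restores $p^s_{t,i^*}$ for all $t\ge t'$ and leaves it only decreased on the no-discharge window $[t^*,t'-1]$, where monotonicity keeps it feasible. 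The saving at $t^*$ is $-(1+\lambda_c)\delta$, while the worst case of absorbing the discharge reduction at $t'$ (converting excess power into load shedding) costs at most $\eta_c\eta_d(1-\lambda_d)\delta$, so the net change is at most $-\bigl[(1-\eta_c\eta_d)+\lambda_c+\eta_c\eta_d\lambda_d\bigr]\delta\le 0$ since $\eta_c\eta_d\le 1$ and $\bm\lambda\ge\bm 0$; this is exactly why part (i) needs no hypothesis on $\bm\lambda$.

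Part (ii) is the mirror image, but the efficiency now works against us. If $p^d_{t^*,i^*}>0$ and $p^{ex}_{t^*,i^*}>0$, I reduce discharge and excess at $(t^*,i^*)$ by a common $\delta>0$, keeping \eqref{eq:power_balance} but raising $p^s_{t,i^*}$ by $\delta/\eta_d$ for $t\ge t^*$, threatening the upper bound in \eqref{eq:battery_ps_limits}. To compensate I reduce the next charging event $p^c_{t',i^*}$ by the amplified matched amount $\delta/(\eta_c\eta_d)$, making $p^s$ unchanged beyond $t'$ and only raised on the no-charge window $[t^*,t'-1]$, where its peak sits at $t^*$ and stays below $E^{\max}$. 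The saving at $t^*$ is $-(1+\lambda_d)\delta$, but the amplified charge reduction at $t'$ costs at most $(1-\lambda_c)\,\delta/(\eta_c\eta_d)$, giving a net change of at most $\frac{\delta}{\eta_c\eta_d}\bigl[(1-\eta_c\eta_d)-(\lambda_c+\eta_c\eta_d\lambda_d)\bigr]$, which is strictly negative precisely under the hypothesis $\lambda_c+\eta_c\eta_d\lambda_d>1-\eta_c\eta_d$, yielding the contradiction.

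I expect the main obstacle to be the feasibility bookkeeping for the state of charge: a perturbation at a single $(t^*,i^*)$ propagates along the whole horizon through \eqref{eq:battery_storage}, so the crux is to show the efficiency-matched compensation at the neighboring charge/discharge time $t'$ exactly cancels the propagated change beyond $t'$, while monotonicity of $p^s_{\cdot,i^*}$ on the intervening window keeps the bounds \eqref{eq:battery_ps_limits} intact. The second delicate point is the tight objective accounting — tracking how $p^{ls}$ and $p^{ex}$ must absorb the compensating change in the worst case — so that the resulting constants reproduce exactly the threshold $\lambda_c+\eta_c\eta_d\lambda_d$ versus $1-\eta_c\eta_d$, strict in (ii) and always satisfied in (i).
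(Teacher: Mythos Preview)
Your proposal is correct and follows essentially the same exchange argument as the paper's proof (Lemmas~\ref{lemma:pc_pls=0} and~\ref{lemma:pd_pex=0} in Appendix~\ref{proof_thmstructure}): perturb at the violating time $t^*$ and compensate at later charge/discharge events so that the state of charge stays feasible, then compare objectives. The only notable difference is one of scale: the paper reduces the violating quantity all the way to zero in one shot, which may force compensation at \emph{several} subsequent discharge (resp.\ charge) times $\tau_1,\dots,\tau_k$ and therefore needs a cascaded adjustment together with a telescoping bound on the total compensation (Claims~\ref{claim:pcpls_sum_ub} and~\ref{claim:pdpex_sum_ub}); you instead take $\delta$ small enough that the single first subsequent event $t'$ can absorb the entire efficiency-matched compensation, and monotonicity of $p^s$ on the intervening window handles feasibility. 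Your route is a bit more economical, while the paper's version has the minor advantage of producing a new optimal solution that actually eliminates the violation at $(\tau_0,i)$ rather than merely shrinking it. Both arrive at exactly the same threshold $\lambda_c+\eta_c\eta_d\lambda_d$ versus $1-\eta_c\eta_d$.
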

\begin{proof}
    See Appendix~\ref{proof_thmstructure}.
    \QEDA
\end{proof}
Notice that the observations in \Cref{thm:structure} can be found in standard IEEE networks with reasonable efficiency levels (\rev{$\eta_c, \eta_d \ge 0.8$}), wherein batteries placed at specific nodes may discharge while the system may incur excess power simultaneously. 
Below, we also present an example that illustrates this phenomenon for a simpler network, thus establishing structural differences in the optimal solutions of \eqref{formulation:original_mip} problem and \eqref{formulation:regularization} problem for sufficiently large $\bm \lambda$.

\begin{example}\rm
\label{ex:pdpex_not_zero}
\textit{(Condition (ii) in \Cref{thm:structure})} Consider a simple network with $\N=\{1,2\}$, $\T=\{1,2,3\}$, $\L=\{(1,2)\}$. 
Suppose one battery is placed at node $2$ (there is no battery placed at node $1$) with $\N_b=\{2\}$ and $E_c^{\min} = E_d^{\min}=0$, $E_c^{\max} = E_d^{\max}=2$, $E^{\min}=0$, $E^{\max}=4$, $E_0=4$, and \rev{$\eta_c = \eta_d=0.1$}. 
Assume each node has one generator with $G_1^{\min}=G_2^{\min} =2$ and $G_1^{\max}=G_2^{\max} = 4$ \rev{and generator costs $c^g({\bm p^g})=0$}. 
We further assume $-4\leq f_{12} \leq 4$ and the demand is $D_{1,1}=2,\;D_{1,2}={2},\;D_{2,1}={1},\;D_{2,2}={1},\;D_{3,1}={2},\;D_{3,2}={1} $. 
Without loss of generality, we assume that the Ohm's law constraint \eqref{eq:dc_power_flow} is satisfied. An optimal solution \rev{to} \eqref{formulation:original_mip} problem (i.e., $\bm \lambda=\bm 0$ in \eqref{formulation:regularization} problem), denoted as $(\bm p^*,\bm u^*)$, is 
\begin{align*}
   & {p^c}^*_{1,1} =0,\;{p^c}^*_{1,2} =0,\;{p^c}^*_{2,1}=0,\;{p^c}^*_{2,2}=2,\;{p^c}^*_{3,1}=0,\;{p^c}^*_{3,2}={1}, \\
    & {p^d}^*_{1,1} =0,\;{p^d}^*_{1,2} ={0.03},\;{p^d}^*_{2,1}=0,\;{p^d}^*_{2,2}=0,\;{p^d}^*_{2,1}=0,\;{p^d}^*_{2,2}=0, \\
       & {p^{ls}}^*_{1,1} =0,\;{p^{ls}}^*_{1,2} =0,\;{p^{ls}}^*_{2,1}=0,\;{p^{ls}}^*_{2,2}=0,\;{p^{ls}}^*_{3,1}=0,\;{p^{ls}}^*_{3,2}=0, \\
            & {p^{ex}}^*_{1,1} =0,\;{p^{ex}}^*_{1,2}={0.03},\;{p^{ex}}^*_{2,1}=0,\;{p^{ex}}^*_{2,2}=0, \;{p^{ex}}^*_{3,1}=0,\;{p^{ex}}^*_{3,2}=0,\\  
    & u^*_{1,1} =0,\;u^*_{1,2}=0,\;u^*_{2,1}=0,\;u^*_{2,2}=1,\;u^*_{3,1}=0,\;u^*_{3,2}=1.   \end{align*}
Clearly, in this example, ${p^d}^*_{1,2} {p^{ex}}^*_{1,2}>0$. However, we can avoid this situation after considering regularization. When $\bm \lambda=(0.99,\;0.99)^\top$, i.e., this particular choice of $\bm \lambda$ satisfies Condition (ii) in \Cref{thm:structure}, an optimal solution \rev{to} \eqref{formulation:regularization} problem, denoted as $(\hat{\bm p},\hat{\bm u})$, is 
\begin{align*}
   & \hat{p}^c_{1,1} =0,\;\hat{p}^c_{1,2} =0,\;\hat{p}^c_{2,1}=0,\;\hat{p}^c_{2,2}=0,\;\hat{p}^c_{3,1}=0,\;\hat{p}^c_{3,2}={0}, \\
    & \hat{p}^d_{1,1} =0,\;\hat{p}^d_{1,2} =0,\;\hat{p}^d_{2,1}=0,\;\hat{p}^d_{2,2}=0,\;\hat{p}^d_{2,1}=0,\;\hat{p}^d_{2,2}=0, \\
       & \hat{p}^{ls}_{1,1} =0,\;\hat{p}^{ls}_{1,2} =0,\;\hat{p}^{ls}_{2,1}=0,\;\hat{p}^{ls}_{2,2}=0,\;\hat{p}^{ls}_{3,1}=0,\;\hat{p}^{ls}_{3,2}=0, \\
            & \hat{p}^{ex}_{1,1} =0,\;\hat{p}^{ex}_{1,2}=0,\;\hat{p}^{ex}_{2,1}=2,\;\hat{p}^{ex}_{2,2}=0, \;\hat{p}^{ex}_{3,1}=0,\;\hat{p}^{ex}_{3,2}={1},\\  
    & \hat{u}_{1,1} =0,\;\hat{u}_{1,2}=0,\;\hat{u}_{2,1}=0,\;\hat{u}_{2,2}=0,\;\hat{u}_{3,1}=0,\;\hat{u}_{3,2}={0}.   \end{align*}
In this example, Condition (i) in \Cref{thm:structure} is satisfied for both \eqref{formulation:original_mip} problem and \eqref{formulation:regularization} problem whereas Condition (ii) is only satisfied for \eqref{formulation:regularization} problem. 
    \QEDB
\end{example}

To conclude the discussions in this subsection, we provide a summary of the choice of $\bm\lambda$ in \Cref{figure_lambda_comparison}.
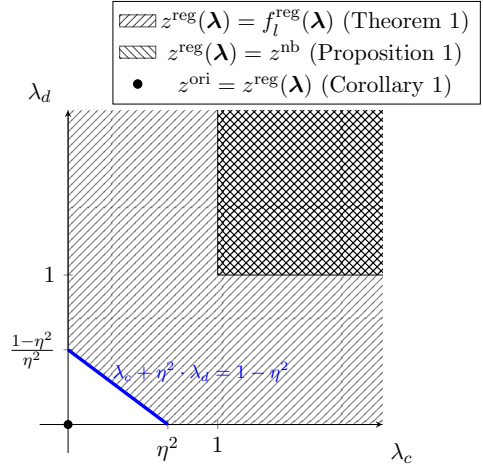
\begin{figure}[h]\centering
\begin{tikzpicture}[scale=0.8]
    \begin{axis}[scale=1,
        unit vector ratio*=1 1 1,
        domain=-0.19:2.10,
        xmin=-0.19, xmax=2.10,
        ymin=-0.19, ymax=2.10,
        samples=200,
        axis y line=center,
        axis x line=middle,
        xtick={0,2/3,1},
        ytick={0,1/2,1},
        xticklabels={0, $\eta^2$, 1}, 
        yticklabels={0, $\frac{1-\eta^2}{\eta^2}$,1},
        xlabel={$\lambda_c$},
        ylabel={$\lambda_d$},
        x label style={
            at={(1,0)}, 
            anchor=west, 
        },
        y label style={
            at={(0,1)}, 
            anchor=south, 
        },
        legend style={at={(0.75,1.32)},anchor=north},
        ]
        \filldraw [pattern=north east lines,opacity=0.7](0,5) -- (0,1/2) -- (2/3,0)-- (5,0);
        \addlegendimage{area legend,pattern=north east lines, opacity=0.7}
        \addlegendentry{$\rev{z^{\mathrm{reg}}(\bm \lambda)} =  f^{\mathrm{reg}}_{l}(\bm \lambda)$ (\Cref{theorem:ip=lp_condition})}
            
        \filldraw [pattern=crosshatch](1,3) -- (1,1) -- (3,1)-- (5,5);
        \addlegendimage{area legend,pattern=north west lines, opacity=0.7}
        \addlegendentry{$\rev{z^{\mathrm{reg}}(\bm \lambda)} = \rev{z^{\mathrm{nb}}}$ (\Cref{proposition:lambda_bigger_than_1})}

        \node [circle,fill,inner sep=1.5pt] at (0,0) {};
        \addlegendimage{only marks, mark=*}
        \addlegendentry{$\rev{z^{\mathrm{ori}}} =  \rev{z^{\mathrm{reg}}(\bm \lambda)}$ (\Cref{corollary:fori=freg})}

        \addplot+[mark=none,blue,-][domain=0:2/3] {1/2-3/4*x} node[pos=0.3](blue1){};
        \node [right,color=blue,-] at (blue1) {\footnotesize\footnotesize\footnotesize{ $\lambda_c + \eta^2\cdot \lambda_d = 1-\eta^2$}}; 
        \draw [ultra thick, blue, - ] (2/3,0) -- (0,1/2);
    \end{axis}	
\end{tikzpicture}
\caption{Model comparisons with different choices of $\lambda_c$ and $\lambda_d$.} 
\label{figure_lambda_comparison}
\end{figure}
    
\subsection{Error Quantification of the Solution from the Regularized MIP Model}
\label{sec:error_quantification}
As we have seen in \Cref{theorem:ip=lp_condition}, in \eqref{formulation:regularization} problem, the penalty required to have zero integrality gap with the LP relaxation decreases when efficiency $\eta$ gets closer to $1$. Nevertheless, we still aim for a better understanding of the quality of objective function change when we adjust the regularizer $\bm \lambda$. Hence, the goal of this subsection is to discuss analytical differences in comparison to the original model with respect to the solution quality.

First, we present a sufficient condition under which the optimal solution \rev{to} \eqref{formulation:regularization} can be used to recover the optimal battery operation schedule for the original problem. 
\begin{restatable}{proposition}{propositionexactnesscondition}\label{proposition:exactness_condition}
Let $\U^*= \{ \bm u : \exists \; \bm \theta, \; \bm f, \;\bm p$ such that together $(\bm \theta, \bm f, \bm p, \bm u)$ is an optimal solution \rev{to} \eqref{formulation:original_mip} problem$\}$ and $\P^*= \{ \bm p : \exists \; \bm \theta, \; \bm f, \;\bm u$ such that together $(\bm \theta, \bm f, \bm p, \bm u)$ is an optimal solution \rev{to} \eqref{formulation:original_mip} problem$\}$. Define the second-best optimal objective value of \eqref{formulation:original_mip} problem as $\rev{z^{\mathrm{ori}}}(\U^*) = \min_{\bm \theta, \bm f,\bm p, \bm u} \left\{ c(\bm p) \colon \eqref{eq:generation_limits}-\eqref{eq:power_balance},\: \bm u \in\{0,1\}^{T \times N}\setminus \U^*\right\}.$ Define the difference between the best optimal objective value and the second best optimal objective value as $\delta = |\rev{z^{\mathrm{ori}}}(\U^*) - \rev{z^{\mathrm{ori}}}| > 0$. Suppose $(\hat{\bm \theta}, \hat{\bm f}, \hat{\bm p}, \hat{\bm u})$ is an optimal solution to \eqref{formulation:regularization} problem. If $\bm \lambda^\top g(\bm p^*) < \delta $ for some $\bm p^* \in \P^*$, then $\hat{\bm u} \in \U^*$.
\end{restatable}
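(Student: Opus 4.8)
The plan is to argue by contradiction, comparing objective values and exploiting two structural facts: \eqref{formulation:regularization} and \eqref{formulation:original_mip} share the same feasible region (they differ only in the objective), and the regularizer $\bm \lambda^\top g(\cdot)$ is nonnegative on that region. The whole proof is then a sandwich of $z^{\mathrm{reg}}(\bm\lambda)$ between an upper bound coming from a known original optimum and a lower bound coming from the assumed-suboptimal $\hat{\bm u}$.

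First I would record three elementary facts. Since the two formulations have identical constraints, every feasible point of \eqref{formulation:regularization} is feasible for \eqref{formulation:original_mip}; in particular $(\hat{\bm \theta}, \hat{\bm f}, \hat{\bm p}, \hat{\bm u})$ is feasible for \eqref{formulation:original_mip}. Second, because $\bm \lambda \ge \bm 0$ and $\bm p^c, \bm p^d \ge \bm 0$ give $g(\bm p) \ge \bm 0$, we have $\bm \lambda^\top g(\bm p) \ge 0$ at every feasible point. Third, since $\U^*$ collects exactly those $\bm u$ that admit an optimal completion, any $\bm u \in \{0,1\}^{T\times N}\setminus \U^*$ yields an objective strictly worse than $z^{\mathrm{ori}}$, so (using $z^{\mathrm{ori}}(\U^*) \ge z^{\mathrm{ori}}$) we get $z^{\mathrm{ori}}(\U^*) = z^{\mathrm{ori}} + \delta$.

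Next I would produce the upper bound on $z^{\mathrm{reg}}(\bm\lambda)$. Take the $\bm p^* \in \P^*$ from the hypothesis together with its optimal completion $(\bm \theta^*, \bm f^*, \bm u^*)$ for \eqref{formulation:original_mip}; this point is feasible for \eqref{formulation:regularization} with value $c(\bm p^*) + \bm \lambda^\top g(\bm p^*) = z^{\mathrm{ori}} + \bm \lambda^\top g(\bm p^*)$. By optimality of $(\hat{\bm \theta}, \hat{\bm f}, \hat{\bm p}, \hat{\bm u})$ for \eqref{formulation:regularization} and the hypothesis $\bm \lambda^\top g(\bm p^*) < \delta$, this gives $z^{\mathrm{reg}}(\bm\lambda) \le z^{\mathrm{ori}} + \bm \lambda^\top g(\bm p^*) < z^{\mathrm{ori}} + \delta$. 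Then I would close the loop: suppose $\hat{\bm u} \notin \U^*$. Since $(\hat{\bm \theta}, \hat{\bm f}, \hat{\bm p}, \hat{\bm u})$ is feasible for \eqref{formulation:original_mip} with $\hat{\bm u} \in \{0,1\}^{T\times N}\setminus \U^*$, it is feasible for the problem defining $z^{\mathrm{ori}}(\U^*)$, so $c(\hat{\bm p}) \ge z^{\mathrm{ori}}(\U^*) = z^{\mathrm{ori}} + \delta$. Combining with $\bm \lambda^\top g(\hat{\bm p}) \ge 0$ yields $z^{\mathrm{reg}}(\bm\lambda) = c(\hat{\bm p}) + \bm \lambda^\top g(\hat{\bm p}) \ge z^{\mathrm{ori}} + \delta$, contradicting the strict upper bound. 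Hence $\hat{\bm u} \in \U^*$.

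There is no deep obstacle here; the argument is routine once the two formulations are seen to share a feasible region. The only points requiring care are the bookkeeping that the second-best value is exactly $z^{\mathrm{ori}}+\delta$ (so that $\delta$ is the genuine objective gap, which relies on $\delta > 0$ and on $\U^*$ containing \emph{all} optimal binary choices), and the sign convention $\bm \lambda^\top g(\hat{\bm p}) \ge 0$, which is precisely what permits dropping the penalty term in the final inequality.
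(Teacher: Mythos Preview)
Your proof is correct and follows essentially the same contradiction argument as the paper: both derive $c(\hat{\bm p}) \ge z^{\mathrm{ori}}(\U^*)$ from $\hat{\bm u}\notin \U^*$, use optimality of $(\hat{\bm\theta},\hat{\bm f},\hat{\bm p},\hat{\bm u})$ for \eqref{formulation:regularization} against an original optimum $(\bm\theta^*,\bm f^*,\bm p^*,\bm u^*)$, and drop the nonnegative penalty term $\bm\lambda^\top g(\hat{\bm p})$ to reach a contradiction with $\bm\lambda^\top g(\bm p^*)<\delta$. The only cosmetic difference is that the paper phrases the final step as two conflicting bounds on $c(\hat{\bm p})-c(\bm p^*)$, whereas you sandwich $z^{\mathrm{reg}}(\bm\lambda)$ directly.
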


\begin{proof}
We prove this by contradiction. Suppose that the presumptions hold and $\hat{\bm u} \notin \U^*$. Then, $(\hat{\bm \theta}, \hat{\bm f}, \hat{\bm p}, \hat{\bm u})$ is a feasible but not an optimal solution to \eqref{formulation:original_mip} problem. Since $\hat{\bm u} \in \{0,1\}^{T \times N} \setminus \U^*$, we have $c(\hat{\bm p}) \ge \rev{z^{\mathrm{ori}}}(\U^*)$. Let $({\bm \theta^*}, {\bm f^*}, {\bm p^*}, {\bm u^*})$ be an optimal solution to \eqref{formulation:original_mip} problem such that $
\bm\lambda^\top g({\bm p^*}) = \min_{\bm p \in \P^*} \bm\lambda^\top g(\bm p)$. Then by the definition of $\delta$, we have:
\begin{align*}
    c(\hat{\bm p}) - c({\bm p^*}) \ge |\rev{z^{\mathrm{ori}}}(\U^*) - \rev{z^{\mathrm{ori}}}| = \delta.
\end{align*}
According to the optimality condition from \eqref{formulation:regularization} problem, we have
\begin{align*}
    c(\hat{\bm p}) + \bm \lambda^\top g(\hat{\bm p})  \le 	c({\bm p^*})+ \bm \lambda^\top g({\bm p^*}).  
\end{align*}
Rearranging the terms, we have
\begin{align*}
    c(\hat{\bm p}) - c({{\bm p^*}}) \le \bm \lambda^\top g({\bm p^*}) - \bm \lambda^\top g(\hat{\bm p}) \le \bm \lambda^\top g({\bm p^*}) < \delta.
\end{align*}
Clearly, $c(\hat{\bm p}) - c({{\bm p^*}}) \ge \delta$ and $c(\hat{\bm p}) - c({{\bm p^*}}) < \delta$ cannot hold simultaneously. Hence, this is a contradiction.
\QEDA
\end{proof}
Notice that when $\bm \lambda$ is small (when $\eta$ is close to 1), the sufficient condition of Propostion~\ref{proposition:exactness_condition} is easy to satisfy. In our computational experiments, we often see this behavior. 
We provide an example to illustrate the exactness condition in Proposition~\ref{proposition:exactness_condition}.

\begin{example}\rm
\label{ex:exactness_condition}
\textit{(Exactness Condition of \Cref{proposition:exactness_condition})} Consider a simple network with $\N=\{1,2\}$, $\T=\{1,2\}$, $\L=\{(1,2)\}$. 
Suppose one battery is placed at node $2$ (there is no battery placed at node $1$) with $\N_b=\{2\}$ and $E_c^{\min} = E_d^{\min}=0$, $E_c^{\max} = E_d^{\max}=2$, $E^{\min}=0$, $E^{\max}=4$, $E_0=2$, and \rev{$\eta_c = \eta_d =0.9$}. 
Assume each node has one generator with $G_1^{\min}=G_2^{\min} =2$ and $G_1^{\max}=G_2^{\max} = 4$ \rev{and generator costs $c^g({\bm p^g})=0$}. 
We further assume $-4\leq f_{12} \leq 4$ and the demand is $D_{1,1}=10,\;D_{1,2}=4,\;D_{2,1}=4,\;D_{2,2}=4$. 
Without loss of generality, we assume that the Ohm's law constraint \eqref{eq:dc_power_flow} is satisfied.
The optimal objective value of the \eqref{formulation:original_mip} problem $f^{\textrm{ori}} = 4.2$. We enumerate all optimal solutions $\bm u$ that achieve this value and find that $\U^*=\{\hat{ \bm u},\bar{\bm u}\}$ with 
\begin{align*}
    & \bar{u}_{1,1} =0,\;\bar{u}_{1,2}=0,\;\bar{u}_{2,1}=0,\;\bar{u}_{2,2}=0,\\
    &  \hat{u}_{1,1} =0,\;\hat{u}_{1,2}=0,\;\hat{u}_{2,1}=0,\;\hat{u}_{2,2}=1.
\end{align*}
Excluding the solutions in $\U^*$, the second-best optimal objective value $\rev{z^{\mathrm{ori}}}(\U^*)=6$. Hence, the difference $\delta=1.8$. When $\bm \lambda = \left({1-\eta^2}/{1+\eta^2},{1-\eta^2}/{1+\eta^2}\right)^\top = \left({19}/{181},{19}/{181}\right)^\top$, $\argmin\{ g(\bm p^*):p^* \in \P^*\} = [0,1.8]^\top$. Notice that $\bm \lambda^\top g(\bm p^*) < \delta$. Therefore, the optimal solution \rev{to} \eqref{formulation:regularization} problem should be exactly the \eqref{formulation:original_mip} problem. We check this condition by solving \eqref{formulation:regularization} problem and we confirm that the solution from \eqref{formulation:regularization} problem is indeed exact.
    \QEDB
\end{example}

In general, we may not be able to show that the solution of \eqref{formulation:regularization} problem recovers a solution to \eqref{formulation:original_mip} problem \rev{as verifying the condition $\bm \lambda^\top g(\bm p^*)$ in \Cref{proposition:exactness_condition} is challenging and would almost imply that we can solve the original problem directly}. Instead, we next provide a bound that quantifies the difference between the objective function value $c(\bm p)$ obtained from the optimal solution \rev{to} \eqref{formulation:original_mip} problem and that obtained from the optimal solution \rev{to} \eqref{formulation:regularization} problem. 

\begin{restatable}{theorem}{theoremworstbound}\label{theorem:worst_bound}
Let $(\bm \theta^*,{\bm f^*},{\bm p^*},{\bm u^*})$ be the optimal solution \rev{to} \eqref{formulation:original_mip} problem and $(\hat{\bm \theta},\hat{\bm f},\hat{\bm p},\hat{\bm u})$ the optimal solution \rev{to} \eqref{formulation:regularization} problem with regularizer $(\lambda_c,\; \lambda_d)$.
The gap between \eqref{formulation:original_mip} problem and \eqref{formulation:regularization} problem is
\begin{align}
\label{eq:worst_bound}
  c(\hat{\bm p}) - c(\bm p^*) \leq T N_b \max \{ E_c^{\max} \lambda_c, E_d^{\max}\lambda_d\}. 
\end{align}
\end{restatable}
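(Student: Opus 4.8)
The plan is to exploit that \eqref{formulation:original_mip} and \eqref{formulation:regularization} share exactly the same feasible region and differ only in their objective, so the original optimum $(\bm\theta^*,\bm f^*,\bm p^*,\bm u^*)$ is a feasible (though generally suboptimal) point for \eqref{formulation:regularization}. First I would invoke optimality of $(\hat{\bm\theta},\hat{\bm f},\hat{\bm p},\hat{\bm u})$ for \eqref{formulation:regularization}, evaluated against that feasible point, to get $c(\hat{\bm p})+\bm\lambda^\top g(\hat{\bm p})\le c(\bm p^*)+\bm\lambda^\top g(\bm p^*)$. Rearranging and using that $g(\hat{\bm p})\ge \bm 0$ (since $\hat{\bm p}^c,\hat{\bm p}^d\ge\bm 0$) together with $\bm\lambda\ge\bm 0$ discards the $-\bm\lambda^\top g(\hat{\bm p})$ term and yields $c(\hat{\bm p})-c(\bm p^*)\le \bm\lambda^\top g(\bm p^*)$. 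The whole theorem thus reduces to bounding $\bm\lambda^\top g(\bm p^*)$, the regularization penalty charged to the original optimum.

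Next I would bound this penalty index by index. Writing $\bm\lambda^\top g(\bm p^*)=\lambda_c\sum_{t,i}(p^c_{t,i})^*+\lambda_d\sum_{t,i}(p^d_{t,i})^*$ and noting that all battery variables vanish outside $\N_b$ by \eqref{eq:no_battery}, the sums run only over the $T N_b$ pairs $(t,i)$ with $t\in\T$, $i\in\N_b$. The one step requiring a genuine idea — rather than routine box-constraint estimation — is to use that $\bm u^*$ is \emph{honestly binary} in the original optimum, so the complementarity holds at every such pair: either the battery charges, giving $(p^d_{t,i})^*=0$ and $(p^c_{t,i})^*\le E_c^{\max}$ by \eqref{eq:battery_pc_limits}, or it discharges, giving $(p^c_{t,i})^*=0$ and $(p^d_{t,i})^*\le E_d^{\max}$ by \eqref{eq:battery_pd_limits}. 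In both cases the per-pair contribution $\lambda_c(p^c_{t,i})^*+\lambda_d(p^d_{t,i})^*$ is dominated by $\max\{\lambda_c E_c^{\max},\lambda_d E_d^{\max}\}$. This complementarity is precisely what upgrades the crude per-pair bound $\lambda_c E_c^{\max}+\lambda_d E_d^{\max}$ to the sharper maximum appearing in \eqref{eq:worst_bound}; without integrality of $\bm u^*$ one could only assert the weaker sum.

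Summing the per-pair estimate over all $T N_b$ index pairs gives $\bm\lambda^\top g(\bm p^*)\le T N_b\max\{E_c^{\max}\lambda_c,E_d^{\max}\lambda_d\}$, and chaining this with the optimality inequality $c(\hat{\bm p})-c(\bm p^*)\le\bm\lambda^\top g(\bm p^*)$ from the first step closes the argument. I do not expect a real obstacle here: the proof is a one-sided optimality comparison followed by a feasibility-based bound, and the only subtlety worth flagging is to remember to invoke the exact (binary) complementarity of the original solution $\bm u^*$ so that the two storage bounds combine as a maximum rather than a sum.
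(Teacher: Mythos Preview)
Your proposal is correct and follows essentially the same argument as the paper: use the shared feasible region and optimality of $\hat{\bm p}$ to obtain $c(\hat{\bm p})-c(\bm p^*)\le \bm\lambda^\top g(\bm p^*)$, then bound each term of $\bm\lambda^\top g(\bm p^*)$ by $\max\{E_c^{\max}\lambda_c,E_d^{\max}\lambda_d\}$ via the complementarity $p^{c*}_{t,i}p^{d*}_{t,i}=0$ and restrict the sum to the $T N_b$ battery indices. Your emphasis on why binarity of $\bm u^*$ sharpens the bound from a sum to a maximum is a nice expository touch that the paper leaves implicit.
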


\begin{proof}
Notice that the feasible regions are the same in \eqref{formulation:original_mip} problem and \eqref{formulation:regularization} problem. 
Hence, $(\hat{\bm \theta},\hat{\bm f},\hat{\bm p},\hat{\bm u})$ is a feasible solution to \eqref{formulation:original_mip} problem and $(\bm \theta^*,{\bm f^*},{\bm p^*},{\bm u^*})$ is a feasible solution to \eqref{formulation:regularization} problem.
By optimality, we have
\begin{align*}
    c(\hat{\bm p}) + \bm \lambda^\top g(\hat{\bm p}) & \le c(\bm p^*) + \bm \lambda^\top g(\bm p^*),
\end{align*}
which implies that
\begin{equation}
\begin{aligned}
\label{eq:prop3_dropping_g}
    c(\hat{\bm p}) - c(\bm p^*) 
    \le \bm \lambda^\top g(\bm p^*) - \bm \lambda^\top  g(\hat{\bm p}) 
    \le \bm \lambda^\top g(\bm p^*). \notag \\
\end{aligned}
\end{equation}
Recall that $\bm \lambda^\top g(\bm p^*) = \sum_{t \in \T} \sum_{i \in \N}\left[ \lambda_c p_{t,i}^{c*} + \lambda_d p_{t,i}^{d*}\right]$. Since $p_{t,i}^{c*}p_{t,i}^{d*}=0$ for all $ t \in \T, i \in \N$, it follows $\lambda_c p_{t,i}^{c*} + \lambda_d p_{t,i}^{d*} \le \max \{E_c^{\max}\lambda_c , E_d^{\max}\lambda_d \}$ for all $t \in \T, i \in \N$.
From \eqref{eq:no_battery}, $p_{t,i}^{c*} = p_{t,i}^{d*}=0$ for all  $ t \in \T,\; i \notin N_b$, we obtain the result in \eqref{eq:worst_bound}.
\QEDA
\end{proof}

Our next goal is then to minimize the worst-case bound predicted by Theorem~\ref{theorem:worst_bound} by selecting specific values for  $\bm \lambda$.

\begin{restatable}{proposition}{propositionbestworstbound}\label{proposition:best_worst_bound}
The best worst-case bound is \rev{achieved} at $\bm \lambda = \left( \frac{E_d^{\max} - \eta^2 E_d^{\max}}{E_d^{\max} + \eta^2 E_c^{\max}}, \; \frac{E_c^{\max} - \eta^2 E_c^{\max}}{E_d^{\max} + \eta^2 E_c^{\max}} \right)^\top$.
\end{restatable}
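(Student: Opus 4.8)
The plan is to recognize that \Cref{proposition:best_worst_bound} amounts to an optimization problem over the regularizer $\bm\lambda$: since $TN_b$ is a fixed positive constant, minimizing the right-hand side of \eqref{eq:worst_bound} is equivalent to minimizing $\max\{E_c^{\max}\lambda_c,\, E_d^{\max}\lambda_d\}$. The relevant feasible region is dictated by \Cref{theorem:ip=lp_condition}: we restrict attention to those $\bm\lambda$ that retain the zero-integrality-gap property, i.e., those satisfying $\lambda_c + \eta^2\lambda_d \geq 1-\eta^2$ (writing $\eta^2 = \eta_c\eta_d$) together with $\bm\lambda \geq \bm 0$. Thus the proposition reduces to solving
\begin{equation*}
\min\left\{\, \max\{E_c^{\max}\lambda_c,\, E_d^{\max}\lambda_d\} \;\colon\; \lambda_c + \eta^2\lambda_d \geq 1-\eta^2,\ \bm\lambda \geq \bm 0 \,\right\}.
\end{equation*}

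First I would argue that any minimizer lies on the boundary line $\lambda_c + \eta^2\lambda_d = 1-\eta^2$. This follows because the objective $\max\{E_c^{\max}\lambda_c,\, E_d^{\max}\lambda_d\}$ is nondecreasing in each coordinate while the feasible set is an upper-right half-plane: if the constraint were slack at a candidate point, one could decrease $\lambda_c$ (or $\lambda_d$) while staying feasible, which never increases the objective, until the constraint becomes active. Next, on this boundary segment, which runs from $(1-\eta^2,\,0)$ to $\bigl(0,\,(1-\eta^2)/\eta^2\bigr)$, the term $E_c^{\max}\lambda_c$ is strictly decreasing while $E_d^{\max}\lambda_d$ is strictly increasing as the single free parameter traverses the segment. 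The pointwise maximum of a strictly decreasing and a strictly increasing affine function is minimized exactly at the crossing point where the two agree; since one term runs from a positive value down to $0$ and the other from $0$ up to a positive value, this crossing point lies in the interior of the segment. Hence the optimal $\bm\lambda$ is characterized by the two equations $\lambda_c + \eta^2\lambda_d = 1-\eta^2$ and $E_c^{\max}\lambda_c = E_d^{\max}\lambda_d$.

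It then remains to solve this $2\times 2$ linear system. Substituting $\lambda_c = (E_d^{\max}/E_c^{\max})\lambda_d$ from the balancing equation into $\lambda_c + \eta^2\lambda_d = 1-\eta^2$ gives $\lambda_d = (1-\eta^2)E_c^{\max}/(E_d^{\max} + \eta^2 E_c^{\max})$ and, back-substituting, $\lambda_c = (1-\eta^2)E_d^{\max}/(E_d^{\max} + \eta^2 E_c^{\max})$, which are exactly the claimed values.

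The main obstacle is conceptual rather than computational: one must justify that the correct feasible set is the half-plane $\lambda_c + \eta^2\lambda_d \geq 1-\eta^2$, because without a lower bound on $\bm\lambda$ the worst-case bound is trivially minimized by $\bm\lambda = \bm 0$. The content of the proposition is therefore that we seek the best error bound \emph{among} regularizers that preserve tractability (zero integrality gap). Once this is fixed, the remaining steps form an elementary monotonicity-plus-balancing argument; an alternative packaging is to cast the problem as the linear program $\min\{t \colon E_c^{\max}\lambda_c \leq t,\ E_d^{\max}\lambda_d \leq t,\ \lambda_c + \eta^2\lambda_d \geq 1-\eta^2,\ \bm\lambda \geq \bm 0\}$ and verify through complementary slackness that all three inequalities are active at optimality, which again pins down the same point.
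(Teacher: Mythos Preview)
Your proposal is correct and follows essentially the same approach as the paper: both reduce the proposition to the optimization problem $\min_{\bm\lambda\geq\bm 0}\{\max\{E_c^{\max}\lambda_c,\,E_d^{\max}\lambda_d\}\colon \lambda_c+\eta^2\lambda_d\geq 1-\eta^2\}$, observe that optimality forces $E_c^{\max}\lambda_c=E_d^{\max}\lambda_d$, and solve the resulting linear system. Your argument is in fact more fully justified than the paper's, which simply asserts that the optimum occurs at the balancing point without spelling out the boundary and monotonicity reasoning you provide.
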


\begin{proof}
Since $T$ and $N_b$ are fixed, minimizing the worst-case bound in \Cref{theorem:worst_bound} reduces to the following minimization problem:
\begin{align*}
     \min_{\lambda_c\geq 0, \lambda_d\geq 0} \left\{ \max \{E_c^{\max} \lambda_c, E_d^{\max}\lambda_d \}\colon \rev{\lambda_c +  \eta_c \eta_d \lambda_d \ge 1-\eta_c \eta_d} \right\},
\end{align*}
where the optimal objective value is achieved when $E_c^{\max} \lambda_c = E_d^{\max} \lambda_d$. Hence, an optimal solution $(\lambda_c^{*}, \lambda_d^{*})$ is 
\begin{align*}
    (\lambda_c^{*}, \lambda_d^{*}) = 
    \left( 
   \rev{ \frac{E_d^{\max} - \eta_c \eta_d E_d^{\max}}{E_d^{\max} + \eta_c \eta_d E_c^{\max}}},
   \rev{ \frac{E_c^{\max} - \eta_c \eta_d E_c^{\max}}{E_d^{\max} + \eta_c \eta_d E_c^{\max}}}
    \right).
\end{align*}
This concludes the proof.
\QEDA
\end{proof}
We remark that when $E_c^{\max} = E_d^{\max}$, an optimal choice of $(\lambda_c, \lambda_d)$ reduces to
\[(\lambda_c, \lambda_d) = \rev{\left(\frac{1-\eta_c \eta_d}{1+\eta_c \eta_d},\;\frac{1-\eta_c \eta_d}{1+\eta_c \eta_d}\right)},\]
which implies that for this specific case, the best worst-case bound in \eqref{eq:worst_bound} is achieved when $\lambda_c = \lambda_d$.

\Cref{theorem:worst_bound} and \Cref{proposition:best_worst_bound} provide the worst-case analysis of $c(\hat{\bm p}) - c(\bm p^*)$, where $\bm p^*$ is an optimal solution \rev{to} \eqref{formulation:original_mip} problem and $\hat{\bm p}$ is an optimal solution \rev{to} \eqref{formulation:regularization} problem. 
However, we expect the actual difference between the objective function value of the problem to be much smaller due to the following reasons: (i) In the proof of \Cref{theorem:worst_bound}, we drop the charging and discharging values of the regularized MIP solution by taking the minimum level of zero for all times considered. Even though we expect that the amounts of charging and discharging become smaller when $\bm \lambda$ increases, assuming them to be completely not charging or discharging may be an underestimation (Proposition~\ref{proposition:lambda_bigger_than_1}).
Indeed for high-efficiency values, we expect the amount of charging and discharging in the regularized MIP model to be close to the amount of charging and discharging in the original MIP model;
and (ii) Also note that we upper bound amount of charging and discharging by $ E_c^{\max}$ and $E_d^{\max}$, respectively, which in many cases can be a significant overestimation. Clearly, as shown in \Cref{theorem:worst_bound}, the worst-case bound depends on the values of $E_c^{\max}$ and $E_d^{\max} $. 

To summarize the results of this section, we show that when the efficiency level increases, in the following example, the optimal solution from \eqref{formulation:regularization} problem is an optimal solution to \eqref{formulation:original_mip} problem, and the empirical gap is much smaller than the theoretical gap. Our empirical results in \Cref{sec:numerical_experiment} further demonstrate that the true difference is much less than the theoretical worst-case bound in the above result.

\begin{example}\rm
\label{example:bound}
\textit{(Gap between solutions)} Consider a simple network with $\N=\{1,2\}$, $\T=\{1,2\}$, $\L=\{(1,2)\}$. Suppose one battery is placed at node $2$ (i.e., $\N_b=\{2\}$) with $E_c^{\min} = E_d^{\min}=0$, $E_c^{\max} = E_d^{\max}=1$, $E^{\min}=0$, $E^{\max}=6$, and $E_0=0$. 
Assume each node has one generator with $G_1^{\min}=G_2^{\min} =2$ and $G_1^{\max}=G_2^{\max} = 4$ \rev{and generator costs $c^g({\bm p^g})=0$}. 
We further assume $-1\leq f_{12} \leq 1$, the demand is $D_{1,1}=5,\;D_{1,2}={1},\;D_{2,1}={8},\;D_{2,2}={4}$, and the values of the regularizers $\lambda_c$ and  $\lambda_d$ are the same, i.e., $\lambda = \lambda_c = \lambda_d$.
In Figures~\ref{fig:example_4a}-\ref{fig:example_4c}, we numerically illustrate how the values of the regularizer affect the objective function values $c(\bm p)$, where the vertical axis represents the objective function values $c(\bm p)$, and the horizontal axis represents the regularize values $\lambda$.
Three small incremental efficiency levels $\eta \in \{1/\sqrt{2.1}, 1/\sqrt{2}, 1/\sqrt{1.9}\}$ are considered \rev{where $\eta_c = \eta_d = \eta$}. Based on \Cref{proposition:best_worst_bound}, we also plot the objective function values with $\lambda = {1-\eta^2}/{1+\eta^2}$.  From Figure~\ref{fig:example_4a}, we see that when $\eta = 1/\sqrt{2.1} \approx 0.69$, the optimal solution from the regularized MIP, with the choice of $(\lambda_c,\;\lambda_d)$ such that $\lambda_c + \eta^2 \lambda_d \ge 1-\eta^2$, is indeed not an optimal solution to the original MIP.
In Figure~\ref{fig:example_4c}, we observe that, with a higher efficiency level $\eta =1/\sqrt{1.9}\approx 0.72$, the optimal solution from the regularized MIP is the optimal solution to the original MIP, while the choice of $(\lambda_c,\;\lambda_d)$ satisfying the condition that the LP relaxation of the regularized MIP model is without the integrality gap. 
In Figure~\ref{fig:example_4d}, we show how the actual difference is much less than the theoretical worst-case bound for this example.
\QEDB
\end{example}
\begin{figure}[hbt]
    \centering
    \subfigure[$ \eta=\frac{1}{\sqrt{2.1}}$]{
\includegraphics[width=0.4\textwidth]{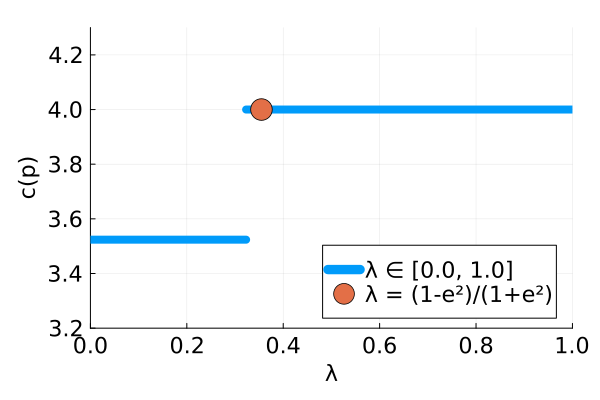}
        \label{fig:example_4a}
    }
    \subfigure[$ \eta=\frac{1}{\sqrt{2}}$]{       \includegraphics[width=0.4\textwidth]{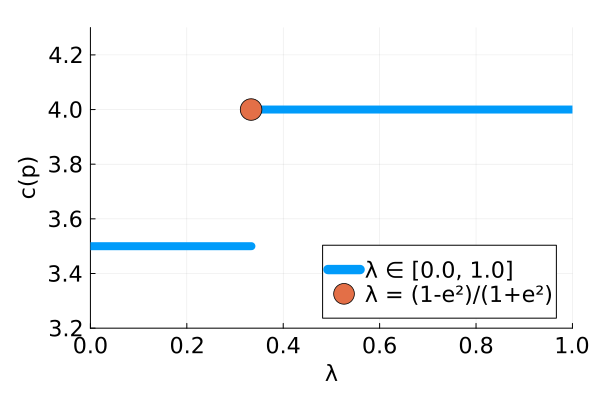}
          \label{fig:example_4b}
    }
    \subfigure[$ \eta=\frac{1}{\sqrt{1.9}}$]{
\includegraphics[width=0.4\textwidth]{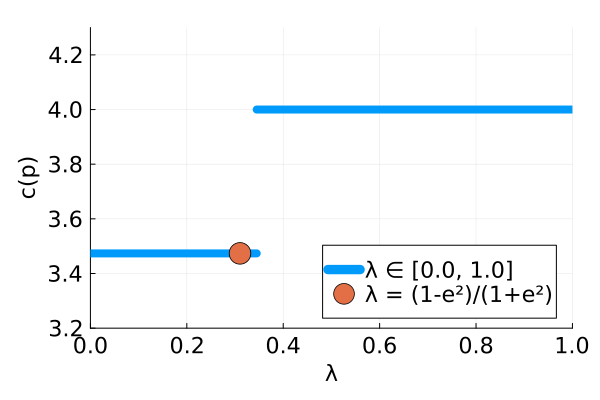}
          \label{fig:example_4c}
    }
    \subfigure[Gap Comparisons]{
\includegraphics[width=0.4\textwidth]{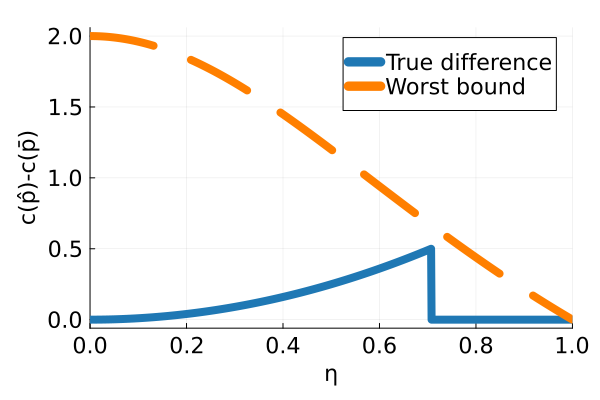}
          \label{fig:example_4d}
    }
    \caption{\rev{Change in the objective value with respect to $\lambda$, where $\lambda_c=\lambda_d=\lambda$ in Proposition~\ref{proposition:best_worst_bound}. The point denotes the value with $\lambda=(1-\eta^2)/(1+\eta^2)$ for Figures (a), (b), and (c). For Figure (d), the orange dashed line represents the theoretical worst-case bound where the blue solid line shows the empirical difference.}}
    \label{fig:example_4}
\end{figure}

\section{Trilevel \texorpdfstring{$N-k$}{Lg} Contingency Problem with ESS Siting}
\label{sec:trilevel_n-k_contingency}
We apply the regularized MIP model to the long-term planning with $N-k$ contingency problem to improve transmission reliability by strategically siting batteries. Battery placement problem under $N-k$ contingency is modeled as a $\min$-$\max$-$\min$ problem with binary variables in all three levels. To the best of our knowledge, there is no known efficient algorithm to solve such a trilevel problem with binary variables at all three levels. We show that \eqref{formulation:regularization} problem provides provably high-quality solutions. 

\subsection{Formulation}
The trilevel $\min$-$\max$-$\min$ problem can be also understood as a defender-attacker-defender problem. A network designer makes a long-term decision on whether to install an energy storage system for node $i\in \N$, represented by variable $x_i$, to enhance the robustness of the power system. The system operator has a budget of $b$ batteries to add. 
The second level is an interdictor who can disrupt up to $k$ transmission lines with the goal of maximizing load shedding or excess power. The lowest level is a system operator solving DCOPF. The overall trilevel problem is presented in Figure \ref{fig:trilevel_summary}. 
For simplicity, following the theoretical and computational results in~\cite{johnson2022scalable}, we remove the Ohm's law constraint from the DCOPF in the third level. \rev{Furthermore, we assume that the generator cost $c^g(\bm p^g) = 0$, since the primary focus of this long-term planning problem under contingency is to meet the demand exactly.}
The detailed formulation is provided in Appendix~\ref{sec:numerical_formulation}.

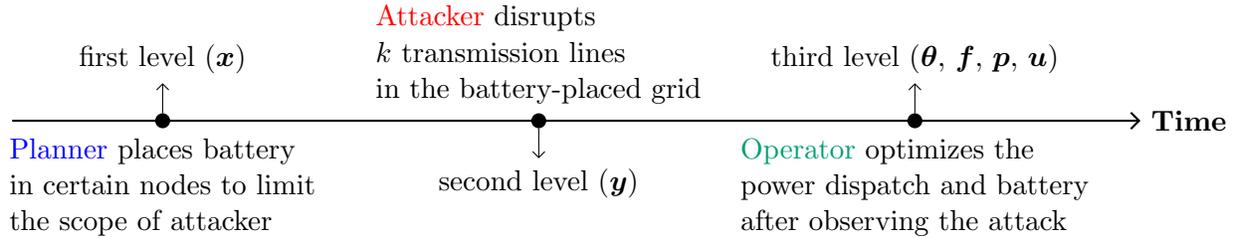
\begin{figure}[!hbt]
    \centering
    \begin{tikzpicture}[
                > = Straight Barb,
    node distance = 28mm and 44mm,
       dot/.style = {circle, fill, inner sep=2pt, outer sep=0pt},
    every label/.append style = {align=left},
    font= \linespread{1}\selectfont
                        ]
    \draw[thick,->] (0,0) -- (15,0) node[right] {\textbf{Time}};
    \node (a) [dot,label=below:\textcolor{blue}{Planner} places battery \\ in certain nodes to limit \\the scope of attacker] at (2.0,0) {};
    \node (b) [dot,label=above:\textcolor{red}{Attacker} disrupts \\ $k$ transmission lines \\in the battery-placed grid] at (7.0,0) {};
    \node (c) [dot,label=below:\textcolor{bluegreen}{Operator} optimizes the \\ power dispatch and battery \\after observing the attack] at (12.0,0) {};
    \draw[->]  (a) -- ++ (0,0.5) node[above] {first level ($\bm x)$};           
    \draw[->]  (b) -- ++ (0,-0.5) node[below] {second level ($\bm y$)};
    \draw[->]  (c) -- ++ (0,0.5) node[above] {third level ($\bm \theta$, $\bm f$, $\bm p$, $\bm u$)};
    \end{tikzpicture}
\caption{Long-term planning under $N-k$ contingency problem is a trilevel $\min$-$\max$-$\min$ problem with binary decision variables in each level.}
\label{fig:trilevel_summary}
\vspace*{-0.5cm}
\end{figure}

\subsection{Solution Methodology}
\label{sec:trilevel_solution_methodology}
There are a few algorithms proposed to solve bilevel or trilevel problems (see, e.g., \citealt{johnson2023covering,zeng2014solving,bienstock2008computing}). However, for this particular trilevel problem with binary variables in all three stages, applying existing methods is computationally intractable. Even for the smallest instance of the IEEE 14-bus system and with the first-stage decision $\bm x$ fixed, solving the bilevel $\max\text{-}\min$ problem using the state-of-the-art algorithm proposed by \cite{zeng2014solving} does not converge in 6 hours. Hence, \rev{applying the existing bilevel algorithms to} the original trilevel problem is intractable using the \eqref{formulation:original_mip} formulation.
However, using the \eqref{formulation:regularization} in the third level has the advantage that the third level becomes an LP per Theorem~\ref{theorem:ip=lp_condition}.
Moreover, we make the following remark.

\begin{remark}
Explicit bounds can be found for all dual variables of the third-level problem that appear in bilinear terms together with the second-level variables.
\end{remark}

The above remark allows the exact linearization for bilinear terms that appear in the objective function using McCormick inequalities. 
We then apply the algorithm by \citet{bienstock2008computing} to solve the resulting bilevel optimization problem.
The details of the boundedness result (Proposition \ref{prop_dual_bound_beta}) and the algorithm are presented in Appendix~\ref{sec:numerical_formulation}. 

Using the regularized MIP model in the third level has another crucial advantage in that it can provide a feasible solution with an upper bound, along with the optimality-gap derived using the LP relaxation of the third level to obtain lower bounds. In particular, the general form of our trilevel problem is the following:
\begin{align}
    z^{OPT} = \min_{\bm x \in \X} \left\{  \max_{\bm y \in \Y} \left\{  \min_{\bm \theta, \bm f, \bm p, \bm u \in \F(\bm x, \bm y)} c(\bm p) \right\} \right\}. \label{eq:trilevel_general}
\end{align}
The set $\Y$ is finite as there are a finite number of edges and only finite possible attack strategies are available. Let $\Y=\{ \bm y^1,\cdots, \bm y^K\}$. Then \eqref{eq:trilevel_general} can be equivalently reformulated as the following:
\begin{equation}
\begin{aligned}
\label{eq:trilevel_OPT}
    z^{OPT} = \min_{\bm x \in \X, \xi} \quad & \xi, \\
    \text{s.t.} \quad & \xi \ge\min \{c(\bm p) : \bm \theta, \bm f, \bm p, \bm u \in \F(\bm x, \bm y^i)\} && \forall i \in [K].
\end{aligned}
\end{equation}
Then, due to Theorem~\ref{theorem:ip=lp_condition}, using the regularized MIP model in the third level is equivalent to solving the following:
\begin{equation}
\begin{aligned}
\label{eq:trilevel_reg}
    z^{REG} = \min_{\bm x \in \X, \xi} \quad & \xi, \\
    \text{s.t.} \quad & \xi \ge \min \{c(\bm p) + \bm \lambda^\top g(\bm p) : \bm \theta, \bm f, \bm p, \bm u \in \F(\bm x, \bm y^i)\} && \forall i \in [K].
\end{aligned}
\end{equation}
Similarly, using the LP relaxation \rev{of the original MIP formulation} in the third level can be formulated as:
\begin{equation}
\begin{aligned}
\label{eq:trilevel_lp}
    z^{LP} = \min_{\bm x \in \X, \xi} \quad & \xi, \\
    \text{s.t.} \quad & \xi \ge \min \{c(\bm p) : \bm \theta, \bm f, \bm p, \bm u \in \R(\bm x, \bm y^i)\} && \forall i \in [K],
\end{aligned}
\end{equation}
where $\R(\bm x, \bm y)$ is a linear relaxation of $\F(\bm x, \bm y)$. It is then easy to verify that $z^{LP} \le z^{OPT} \le z^{REG}$. For example, let $\eta(\bm x,\bm y) = \min \{c(\bm p) : \bm \theta, \bm f, \bm p, \bm u \in \F(\bm x, \bm y)\}$ and $\gamma(\bm x, \bm y) = \min \{c(\bm p) + \bm \lambda^\top g(\bm p) : \bm \theta, \bm f, \bm p, \bm u \in \F(\bm x, \bm y)\}$. 
Let $(\xi^*, \bm x^*, \bm y^*)$ and $(\hat{\xi}, \hat{\bm x}, \hat{\bm y})$ be optimal solutions corresponding to $z^{OPT}$ and $z^{REG}$ respectively. Let $\check{\bm y} \in\textup{arg max}_{\bm y \in \Y}\eta(\hat{ \bm x}, \bm y)$.
Then we have 
{$z^{REG} = \hat{\bm \xi} = \gamma(\hat{\bm x}, \hat{\bm y}) \geq \gamma(\hat{\bm x}, \bm \check{\bm y}) \geq \eta(\hat{\bm x}, \check{\bm y}) \geq \eta (\bm x^*, \bm y^*)  = \bm \xi^* = z^{OPT}$, where the first inequality follows from \eqref{eq:trilevel_reg} and the optimality of $\hat{y}$ for the second-level max optimization problem when $\bm x$ is fixed to $\hat{\bm x}$, the second inequality follows from the definition of $\eta(\cdot)$ and $\gamma(\cdot)$, and the last inequality follows from the optimality of $\bm x^*$ for \eqref{eq:trilevel_OPT}.}
A similar proof can be used to verify that $z^{LP} \leq z^{OPT}$.
Also note that if $\hat{\bm x}$ is the optimal solution of $z^{REG}$, then using the notation from the previous paragraph, we have that  $\max_{\bm y \in \Y} \{  \min_{\bm \theta, \bm f, \bm p, \bm u \in \F(\hat{\bm{x}}, \bm y)} c(\bm p) \} = \eta(\hat{\bm x}, \check {\bm y}) \leq \gamma(\hat{\bm x}, \hat{\bm y}) = z^{REG}.$ Thus, we arrive at the following conclusion.
\begin{proposition} 
\label{proposition:trilevel_max_optimality_gap}
Let $\hat{\bm{x}}$ be an optimal solution of (\ref{eq:trilevel_reg}). Then this solution is a solution for the trilevel problem (\ref{eq:trilevel_OPT}) with an optimality-gap of at most $(z^{REG} - z^{LP}) / z^{REG}$.
\end{proposition}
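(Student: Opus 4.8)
The plan is to observe that nearly all of the analytical work is already carried out in the paragraph preceding the statement, so that the proof reduces to (i) certifying that $\hat{\bm x}$ is a feasible planner decision whose \emph{true} attained value in \eqref{eq:trilevel_OPT} is sandwiched between the two computable quantities $z^{LP}$ and $z^{REG}$, and (ii) converting that two-sided sandwich into the claimed relative optimality gap. Throughout I would keep the shorthand $\eta(\bm x,\bm y)=\min\{c(\bm p):\bm\theta,\bm f,\bm p,\bm u\in\F(\bm x,\bm y)\}$ and $\gamma(\bm x,\bm y)=\min\{c(\bm p)+\bm\lambda^\top g(\bm p):\bm\theta,\bm f,\bm p,\bm u\in\F(\bm x,\bm y)\}$ introduced above, together with $\check{\bm y}\in\arg\max_{\bm y\in\Y}\eta(\hat{\bm x},\bm y)$.

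First I would identify the value actually realized by implementing $\hat{\bm x}$ in the original trilevel problem, namely $\eta(\hat{\bm x},\check{\bm y})=\max_{\bm y\in\Y}\eta(\hat{\bm x},\bm y)$. Since $\hat{\bm x}\in\X$ is feasible for the outer minimization of \eqref{eq:trilevel_OPT}, its realized value dominates the trilevel optimum, giving $z^{OPT}\le\eta(\hat{\bm x},\check{\bm y})$. Next I would import the two bracketing facts already established: from the displayed chain preceding the proposition, $\eta(\hat{\bm x},\check{\bm y})\le\gamma(\hat{\bm x},\hat{\bm y})=z^{REG}$ (using $\bm\lambda^\top g(\bm p)\ge 0$, hence $\gamma\ge\eta$, and the optimality of $\hat{\bm y}$ for the inner maximization at $\hat{\bm x}$), and from the earlier argument $z^{LP}\le z^{OPT}$. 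Concatenating these yields the full ordering
\begin{align*}
 z^{LP}\;\le\;z^{OPT}\;\le\;\eta(\hat{\bm x},\check{\bm y})\;\le\;z^{REG}.
\end{align*}

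Finally I would pass to the relative gap. In the trilevel application the generator cost is zero and load shedding and excess power are nonnegative, so $c(\bm p)\ge 0$ and all four quantities above are nonnegative with $z^{REG}>0$ in the nontrivial case. The suboptimality of $\hat{\bm x}$, measured relative to its own attained value, is $\bigl(\eta(\hat{\bm x},\check{\bm y})-z^{OPT}\bigr)/\eta(\hat{\bm x},\check{\bm y})=1-z^{OPT}/\eta(\hat{\bm x},\check{\bm y})$; since $z^{OPT}\ge z^{LP}$ raises the numerator of the ratio while $\eta(\hat{\bm x},\check{\bm y})\le z^{REG}$ lowers its denominator, we get $z^{OPT}/\eta(\hat{\bm x},\check{\bm y})\ge z^{LP}/z^{REG}$, and therefore the gap is at most $1-z^{LP}/z^{REG}=(z^{REG}-z^{LP})/z^{REG}$, as claimed. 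I do not anticipate a genuine obstacle here: the substantive content is the upper bracket $\eta(\hat{\bm x},\check{\bm y})\le z^{REG}$, which is proved immediately before the statement, and the only point demanding care is the direction of the final inequality, which rests entirely on the nonnegativity of the objective so that replacing $z^{OPT}$ by $z^{LP}$ and $\eta(\hat{\bm x},\check{\bm y})$ by $z^{REG}$ both move the ratio in the conservative direction.
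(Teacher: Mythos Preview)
Your proposal is correct and follows essentially the same route as the paper: the paper establishes the chain $z^{LP}\le z^{OPT}\le \eta(\hat{\bm x},\check{\bm y})\le z^{REG}$ in the paragraph immediately preceding the proposition and then states the result, leaving the conversion to a relative gap implicit; you have supplied that final step explicitly, together with the nonnegativity observation that justifies it.
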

\rev{Lastly, we consider one of the tightened LP relaxations from \cite{pozo2023convex}, which is referred to as vertex-representation convex hull (VCH):
\begin{equation}
\begin{aligned}
\label{eq:trilevel_vch}
    z^{VCH} = \min_{\bm x \in \X, \xi} \quad & \xi, \\
    \text{s.t.} \quad & \xi \ge \min \{c(\bm p) : \bm \theta, \bm f, \bm p, \bm u \in \H(\bm x, \bm y^i)\} && \forall i \in [K],
\end{aligned}
\end{equation}
where $\H(\bm x, \bm y)$ is the tightened linear relaxation of $\F(x,y)$ using the vertex-representation. Following the same rationale above, we have $z^{VCH} \leq z^{OPT}$.}
\section{Numerical Experiment}
\label{sec:numerical_experiment}
In this section, we demonstrate the strength of the regularized MIP model by (i) testing the quality of the solution from the regularized MIP model against the original MIP model; and (ii) applying the regularized MIP model to solve the long-term planning $N-k$ contingency problem to improve transmission reliability by strategically sitting batteries. The rest of this section is organized as follows. In Section \ref{sec:experimental_setting}, we provide the experimental setting including the description of the networks being used, load scenarios, and relevant battery parameters. 
Sections \ref{sec:testing_regularized_mip} and \ref{sec:computation_results_n-k} present results on the performance of the regularized MIP model to solve DCOPF and trilevel $N-k$ contingency problem respectively. 

\subsection{Experimental Setting}
\label{sec:experimental_setting}
We use standard IEEE instances available from MATPOWER (\citealt{zimmerman2010matpower}) and use the PowerModels package to read the network data (\citealt{powermodels}). All numerical instances are implemented on Julia version 1.7 (\citealt{julia2017}) using Gurobi version 10.0 as the optimization solver (\citealt{gurobi}) on a Linux x86 machine with a 64-bit operating system with 2.3GHz processor on 64GB RAM.  
\subsubsection{Networks.}
The network instance contains generator information, demand load information, and branch information, which is sufficient to model the DCOPF problem with batteries except for the multi-period demand load profiles and battery parameters. In PEGASE and RTE networks, the minimum output of a generator is nonnegative, whereas the minimum output is $0$ in IEEE networks. For IEEE networks, the generator minimum output is scaled to be $1/3$ of the maximum output, which is similar to the level of minimum output in PEGASE and RTE networks.
\subsubsection{Hourly Load Scenarios.} 
Since network information provides a single nominal load demand, we expand the given load demand to the time horizon considered on an hourly basis for one day, i.e., $T = 24$. We benchmark the hourly demand load of power in the U.S. lower region reported from the \cite{eia2022power} and shape the demand load in each network data to create one $T=24$ hourly demand load at each demand bus for a demand load profile in one day. Specifically, suppose that the benchmark demand is denoted as $\bm D^{0}\in \Re_+^{T}$. Let $D_{i} \in \Re_+$ denote the nominal load demand at a demand node $i \in \N$. In order to have the optimal solutions with nontrivial battery operations, we rescale $D_i$ so that $D_i \approx 0.8 G^{\max}_i$. Then, the demand at time $t \in \T$ for node $i \in \N$ is given by $D_{t,i} = D_{i} D^{0}_{t}/D^{0}_{1}$. When running multiple simulations, we add a Gaussian noise with a standard deviation, which is a certain fraction $\hat{\sigma}$ of the demand, to obtain different demand profiles for each simulation. Formally, that is, with some finite number of demand scenarios considered, $\tilde{D}^{j}_{t,i} = D_{t,i} + r_{t,i,j}$ where $r_{t,i,j}\thicksim \mathcal{N}(0,\sigma^2_{t,i})$ and $\sigma_{t,i} = \hat{\sigma} \cdot D_{t,i}$ for simulation $j$.
\begin{figure}[hbt!]
\centering
\includegraphics[width=0.3\textwidth]{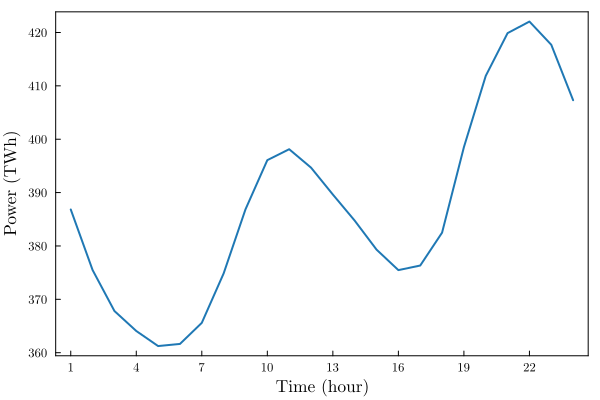}
    \caption{Benchmark Load Demand $\bm D^0$.}
    \label{fig:benchmark_demand}
    \vspace{-0.5cm}
\end{figure}

\subsubsection{Battery Parameters.}
Battery parameters are largely adopted from \cite{kody2022} and modified to account for the size of the network. Table~\ref{tab:battery_parameters} summarizes the battery parameters used for networks including IEEE 73, PEGASE 89, IEEE 118, and IEEE 162-bus systems. For smaller networks,  the maximum storage limit, charge rate, and discharge rate are divided by $5$ for the IEEE 14-bus system. For a slightly larger network IEEE 300-bus system, maximum limit/rates are multiplied by $2.5$. Similarly, for large networks including PEGASE 1354 and RTE 1888, multiplication is by $5$. Unless otherwise stated, these standard battery parameters are used for all numerical experiments in the paper.
\begin{table}[hbt!]
\centering
\small
\setlength\extrarowheight{-2pt}
\caption{Battery Parameters for a Medium-Size Network}
\label{tab:battery_parameters}
\begin{tabular}{lcr}
\toprule
& Parameter & \multicolumn{1}{c}{Value} \\
\midrule
Minimum storage limit  &  $E^{\min}$ &  0.00 p.u. \\
Maximum storage limit  &  $E^{\max}$  &  1.00 p.u.     \\
Efficiency      &  $\eta$ &  0.95 \\
Minimum charge rate    &  $E_c^{\min}$ & 0.00 p.u./hour \\
Maximum charge rate    &  $E_c^{\max}$ & 0.95 p.u./hour \\
Minimum discharge rate &  $E_d^{\min}$ & 0.00 p.u./hour \\
Maximum discharge rate &  $E_d^{\max}$ & 0.95 p.u./hour \\
\bottomrule
\end{tabular}
\end{table}

\subsection{Testing the Regularized MIP Model on DCOPF with ESS}
\label{sec:testing_regularized_mip}
In this section, we show the computational results of applying the regularized MIP model to the DCOPF problem with ESS. 
We solve \eqref{formulation:original_mip} to find the true optimal solution and evaluate the solution from the regularized MIP model, i.e., \eqref{formulation:regularization}. 
Table~\ref{tab:example6_modelcompare} shows the average optimality gap computed as $(\hat z - z^*)/z^*$ where $\hat z$ is the objective value of the feasible solution from \eqref{formulation:regularization} and $z^*$ is the optimal objective value of \eqref{formulation:original_mip}.
The problem is solved with $40$ random demand scenarios on networks. In this setting, $b$ number of batteries are placed on selected buses with the largest power outputs.
Since the actual efficiencies of grid-scale batteries are known to be higher than $80\%$, we consider different efficiency levels $\eta\in \{0.85, 0.9, 0.95\}$. This computational result demonstrates that the regularized MIP yields solutions close to the optimal solution. The average gap across networks is limited to less than $0.5\%$ for different efficiency levels considered.

\begin{table}[hbt!]
\centering
\small
\setlength\extrarowheight{-2pt}
\caption{The average optimality gap with respect to the optimal objective value}
\label{tab:example6_modelcompare}
\begin{tabular}{lrrrr}
\toprule
& & \multicolumn{3}{c}{Efficiency Level} \\
Network   & $b$   & $\eta=0.85$       & $\eta=0.90 $      & $\eta=0.95 $ \\
\midrule
IEEE 14     & 2 & 0.00\% & 0.00\% & 0.00\% \\
IEEE 73     & 2 & 0.33\% & 0.23\% & 0.11\% \\
PEGASE 89   & 2 & 0.00\% & 0.00\% & 0.00\% \\
IEEE 118    & 2 & 0.00\% & 0.00\% & 0.00\% \\
IEEE 162    & 3 & 0.00\% & 0.00\% & 0.00\% \\
IEEE 300    & 3 & 0.08\% & 0.05\% & 0.02\% \\
PEGASE 1354 & 5 & 0.00\% & 0.00\% & 0.00\% \\
RTE 1888    & 5 & 0.05\% & 0.02\% & 0.03\% \\
\midrule
Average     & & 0.06\% & 0.04\% & 0.02\%  \\
\bottomrule
\end{tabular}
\end{table}


\subsection{Computational Results on \texorpdfstring{$N-k$}{Lg} Contingency Problem with ESS Siting}
\label{sec:computation_results_n-k}
We note that the solution methodology from \Cref{sec:trilevel_solution_methodology} to solve trilevel $\min$-$\max$-$\min$ problems \eqref{eq:trilevel_reg} \rev{--} \eqref{eq:trilevel_lp} may not return the optimal solutions within a specified time limit.
In such cases, we use the \rev{maximum of: (i) }lower bound of $z^{LP}$\rev{; or (ii) lower bound of $z^{VCH}$} and the upper bound of $z^{REG}$ to estimate the optimality-gap as the algorithm returns the upper bound and lower bound at each iteration.
Specifically, let $\overline{z^{LP}}$ and $\underline{z^{LP}}$ be the upper and lower bounds of $z^{LP}$ respectively, \rev{$\overline{z^{VCH}}$ and $\underline{z^{VCH}}$ be the upper and lower bounds of $z^{VCH}$ respectively,} and $\overline{z^{REG}}$ and $\underline{z^{REG}}$ be the upper and lower bounds of $z^{REG}$ respectively. Then, we have the following inequality:
\begin{align}
\label{eq:trilevel_solution_quality}
    \text{optimality-gap} = \frac{|z^{UB}-z^{LB}|}{|z^{UB}|} & \leq \frac{|z^{REG}-\rev{\max\{z^{LP}, z^{VCH}\}}|}{|z^{REG}|} \\
    & \leq \frac{|\overline{z^{REG}}-\rev{\max\{\underline{z^{LP}}, \underline{z^{VCH}}\}}|}{|\overline{z^{REG}}|} =:\text{solution-gap}
\end{align}
where $z^{UB}$ and $z^{LB}$ are the upper and lower bounds of the optimal objective value $z^{OPT}$ respectively.
The first inequality follows from Proposition~\ref{proposition:trilevel_max_optimality_gap}.
Let us define the right-hand side in \eqref{eq:trilevel_solution_quality} as a solution-gap.
The criteria for the solution algorithm to terminate is either when the gap between the upper bound (e.g., $\overline{z^{REG}}$) and the lower bound (e.g., $\underline{z^{REG}}$) is less than the specified limit of 0.5\% or the algorithm runs for a time limit of 6 hours (i.e., 21600 seconds).
In Table~\ref{tab:trilevel_result}, we report the minimum, maximum, and average of the solution-gap from $10$ simulations for each network instance and different combinations of the maximum number of batteries placed and the maximum number of contingencies. 

\begin{table}[hbt!]
\caption{Solution-Gap for $N-k$ Contingency Problem}
\label{tab:trilevel_result}
\centering
\small
\setlength{\tabcolsep}{2pt}
\setlength\extrarowheight{-2pt}
\begin{tabular}{lrrrrrrrrrrrrrrr}
\toprule
& \multicolumn{15}{c}{Optimality-Gap}                              \\
& \multicolumn{3}{c}{$b = 2, k = 3$} &  & \multicolumn{3}{c}{$b = 2, k = 5$} &  & \multicolumn{3}{c}{$b = 3, k = 5$} &  & \multicolumn{3}{c}{$b = 5, k = 10$} \\ \cline{2-4} \cline{6-8} \cline{10-12} \cline{14-16}
Network    & Min       & Max       & Avg      &  & Min       & Max       & Avg      &  & Min       & Max       & Avg      &  & Min       & Max       & Avg       \\
\midrule
IEEE 14     & 0.62\% & 0.63\%  & 0.62\%  &  & 0.57\% & 0.58\%  & 0.57\%  &  & 0.57\%  & 0.58\%  & 0.57\%  &  & 0.57\%  & 0.58\%  & 0.57\%  \\
IEEE 73 & 0.00\% & 0.07\% & 0.03\% &  & 0.00\% & \rev{0.04\%} & \rev{0.01\%} &  & 0.00\% & \rev{0.03\%} & \rev{0.01\%} &  & 0.00\% & 0.02\% & 0.01\% \\
PEGASE 89 & \rev{0.12\%} & \rev{0.12\%} & \rev{0.12\%} &  & \rev{0.10\%} & \rev{0.10\%} & \rev{0.10\%} &  & \rev{0.10\%} & \rev{0.18\%} & \rev{0.16\%} &  & 0.27\% & 1.16\% & 1.04\% \\
IEEE 118 & \rev{0.88\%} & \rev{1.07\%} & \rev{0.95\%} &  & \rev{1.12\%} & \rev{1.73\%} & \rev{1.42\%} &  & \rev{1.81\%} & \rev{2.88\%} & \rev{2.33\%} &  & 2.07\% & 4.78\% & 3.38\% \\
IEEE 162 & \rev{0.20\%} & \rev{0.20\%} & \rev{0.20\%} &  & \rev{0.18\%} & \rev{0.18\%} & \rev{0.18\%} &  & \rev{0.12\%} & \rev{2.77\%} & \rev{1.32\%} &  & \rev{0.34\%} & \rev{2.79\%} & \rev{2.22\%} \\
IEEE 300 & 3.69\% & 4.08\% & 3.81\% &  & 3.41\% & \rev{3.61\%} & 3.48\% &  & 3.69\% & \rev{4.03\%} & \rev{3.82\%} &  & 6.21\% & \rev{8.19\%} & \rev{6.96\%} \\
PEGASE 1354 & 9.81\% & \rev{10.15\%} & \rev{10.06\%} &  & \rev{7.83\%} & \rev{8.44\%} & \rev{8.03\%} &  & 15.00\% & \rev{16.10\%} & \rev{15.53\%} &  & 16.98\% & \rev{20.04\%} & \rev{18.21\%} \\
RTE 1888 & 6.73\% & \rev{7.21\%} & \rev{6.95\%} &  & 6.32\% & \rev{7.14\%} & \rev{6.69\%} &  & 12.11\% & \rev{14.48\%} & \rev{12.90\%} &  & 17.66\% & \rev{26.72\%} & \rev{22.83\%} \\
\bottomrule
\end{tabular}
\vspace{-0.3cm}
\end{table}

We observe that for smaller network systems, the regularized MIP generates provably near-optimal solutions. 
The fact that the solution-gap is $0.00$\% implies that the optimality-gap is $0.00\%$ and $z^{REG}=\rev{\max\{z^{LP}, z^{VCH}\}}=z^{OPT}$ implying that both solving with the regularized MIP model and solving with \rev{one of} the LP relaxation model\rev{s} yield the true optimal solution \rev{to} the trilevel problem.
For PEGASE 1354 and RTE 1888-bus systems, the solution-gap is larger, but this is most likely due to having a poor lower bound on \rev{either} $z^{LP}$ \rev{or $z^{VCH}$} as $\overline{z^{LP}}$ and $\underline{z^{LP}}$ do not converge in \eqref{eq:trilevel_lp} \rev{or $\overline{z^{VCH}}$ and $\underline{z^{VCH}}$ do not converge in \eqref{eq:trilevel_vch}} within the given time limit of 6 hours. \rev{Generally, the VCH model was found to perform better than the LP relaxation especially for larger networks. 
For most of the instances when we see VCH has larger gap than LP, both VCH and LP have relative gaps less than 0.5\%, which is one of the termination criteria of the bilevel algorithm. }

Let us define another measure, trilevel-gap, to indicate the gap we obtain from the algorithm in solving the trilevel problem of the form \eqref{eq:trilevel_reg}, \eqref{eq:trilevel_lp}, \rev{or \eqref{eq:trilevel_vch}}. For example, the trilevel-gap of \eqref{eq:trilevel_reg} is:
\begin{align*}
    \text{trilevel-gap of \eqref{eq:trilevel_reg}} = \frac{|\overline{z^{REG}}-\underline{z^{REG}}|}{|\overline{z^{REG}}|}.
\end{align*}
These trilevel-gaps in solving \eqref{eq:trilevel_reg} \rev{--} \rev{\eqref{eq:trilevel_vch}} are reported in Tables~\ref{tab:trilevel_solve_time_opt_gap_b2_k3}-\ref{tab:trilevel_solve_time_opt_gap_b5_k10} where we see that the averages of trilevel-gap\rev{s} of \eqref{eq:trilevel_lp} \rev{and \eqref{eq:trilevel_vch}} for PEGASE 1354-bus system \rev{can reach beyond 18\% and for RTE 1888-bus system 22\%}.

There are also cases where both algorithms using the regularized MIP model and the LP relaxation model achieve $0.00\%$ trilevel-gaps but result in the solution-gap that is greater than $0.00\%$, which implies that there is a true gap in approximating the true model with either the regularized MIP model or the LP relaxation model. 
This is the case for some of the smaller network systems like the PEGASE 89-bus system.
Nevertheless, for these smaller systems, the solution-gap remains relatively limited.
For example, for the PEGASE 89-bus system with $b=2$ and $k=3$ case, the average solution-gap is $0.16\%$ which implies that the optimal objective value of the regularized MIP overestimates the optimal objective value of the original MIP model by no more than $0.16\%$.

We observe that solving the trilevel problem with the regularized formulation is efficient. 
For most of the instances, \eqref{eq:trilevel_reg} is solved to optimality well within the time limit so that the trilevel-gap is $0.00\%$. 
Only the larger network instances take the entire 6-hour time limit with a small trilevel-gap. 
Hence, using the regularized formulation not only gives a quality upper bound but can be used to solve such trilevel problems efficiently.

\begin{table}[hbt!]
\caption{Average Solution Time and Trilevel-Gap ($b=2$ and $k=3$)}
\label{tab:trilevel_solve_time_opt_gap_b2_k3}
\centering
\small
\setlength\extrarowheight{-2pt}
\begin{tabular}{lrrrrrrrr}
\toprule
& \multicolumn{2}{c}{$z^{REG}$} & & \multicolumn{2}{c}{$z^{LP}$} & & \multicolumn{2}{c}{{$z^{VCH}$}} \\ \cline{2-3} \cline{5-6} \cline{8-9}
\multicolumn{1}{l}{Network} & \multicolumn{1}{c}{Time (sec)} & \multicolumn{1}{c}{Trilevel-Gap} & \multicolumn{1}{c}{} & \multicolumn{1}{c}{Time (sec)} & \multicolumn{1}{c}{Trilevel-Gap} & \multicolumn{1}{c}{{}} & \multicolumn{1}{c}{\rev{Time (sec)}} & \multicolumn{1}{c}{\rev{Trilevel-Gap}} \\
\midrule
IEEE 14 & 8 & 0.00\% &  & 7 & 0.00\% & & \rev{7} & \rev{0.00\%}  \\
IEEE 73 & 479 & 0.00\% &  & 522 & 0.00\% & & \rev{763} & \rev{0.05\%} \\
PEGASE 89 & 12 & 0.00\% &  & 70 & 0.02\% & & \rev{171} & \rev{0.00\%} \\
IEEE 118 & 9970 & 0.00\% &  & 7817 & 0.19\% & & \rev{15852} & \rev{0.07\%} \\
IEEE 162 & 14 & 0.00\% &  & 137 & 0.00\% & & \rev{394} & \rev{0.00\%} \\
IEEE 300 & 63 & 0.13\% &  & 3238 & 3.33\% & & \rev{5564} & \rev{3.52\%} \\
PEGASE 1354 & 20272 & 2.21\% &  & 19388 & 13.25\% & & \rev{21600} & \rev{9.06\%} \\
RTE 1888 & 8961 & 0.14\% &  & 22304 & 7.46\% & & \rev{22517} & \rev{6.77\%} \\
\bottomrule
\end{tabular}
\vspace{-1em}
\end{table}

\begin{table}[hbt!]
\caption{Average Solution Time and Trilevel-Gap ($b=2$ and $k=5$)}
\label{tab:trilevel_solve_time_opt_gap_b2_k5}
\centering
\small
\setlength\extrarowheight{-2pt}
\begin{tabular}{lrrrrrrrr}
\toprule
& \multicolumn{2}{c}{$z^{REG}$} & & \multicolumn{2}{c}{$z^{LP}$} & & \multicolumn{2}{c}{{$z^{VCH}$}} \\ \cline{2-3} \cline{5-6} \cline{8-9}
\multicolumn{1}{l}{Network} & \multicolumn{1}{c}{Time (sec)} & \multicolumn{1}{c}{Trilevel-Gap} & \multicolumn{1}{c}{} & \multicolumn{1}{c}{Time (sec)} & \multicolumn{1}{c}{Trilevel-Gap} & \multicolumn{1}{c}{{}} & \multicolumn{1}{c}{\rev{Time (sec)}} & \multicolumn{1}{c}{\rev{Trilevel-Gap}} \\
\midrule
IEEE 14 & 8 & 0.00\% &  & 7 & 0.00\% &  & {8} & {0.00\%}\\
IEEE 73 & 601 & 0.00\% &  & 697 & 0.05\% &  & {843} & {0.12\%} \\
PEGASE 89 & 16 & 0.00\% &  & 113 & 0.00\% &  & {273} & {0.00\%} \\
IEEE 118 & 3752 & 0.67\% &  & 2804 & 0.25\% &  & {5005} & {0.37\%} \\
IEEE 162 & 34 & 0.00\% &  & 550 & 0.00\% &  & {473} & {0.33\%} \\
IEEE 300 & 92 & 0.07\% &  & 3618 & 3.11\% &  & {5113} & {3.25\%} \\
PEGASE 1354 & 14453 & 1.83\% &  & 21600 & 13.04\% &  & {21600} & {7.32\%} \\
RTE 1888 & 12267 & 0.67\% &  & 21600 & 7.65\% &  & {21600} & {6.63\%} \\
\bottomrule
\end{tabular}
\vspace{-1em}
\end{table}

\begin{table}[hbt!]
\caption{Average Solution Time and Trilevel-Gap ($b=3$ and $k=5$)}
\label{tab:trilevel_solve_time_opt_gap_b3_k5}
\centering
\small
\setlength\extrarowheight{-2pt}
\begin{tabular}{lrrrrrrrr}
\toprule
& \multicolumn{2}{c}{$z^{REG}$} & & \multicolumn{2}{c}{$z^{LP}$} & & \multicolumn{2}{c}{{$z^{VCH}$}} \\ \cline{2-3} \cline{5-6} \cline{8-9}
\multicolumn{1}{l}{Network} & \multicolumn{1}{c}{Time (sec)} & \multicolumn{1}{c}{Trilevel-Gap} & \multicolumn{1}{c}{} & \multicolumn{1}{c}{Time (sec)} & \multicolumn{1}{c}{Trilevel-Gap} & \multicolumn{1}{c}{{}} & \multicolumn{1}{c}{\rev{Time (sec)}} & \multicolumn{1}{c}{\rev{Trilevel-Gap}} \\
\midrule
IEEE 14 & 7 & 0.00\% &  & 8 & 0.00\% &  & {6} & {0.00\%}\\
IEEE 73 & 993 & 0.00\% &  & 730 & 0.05\% &  & {1006} & {0.20\%} \\
PEGASE 89 & 16 & 0.00\% &  & 245 & 0.00\% &  & {669} & {0.06\%} \\
IEEE 118 & 4441 & 0.98\% &  & 2205 & 0.22\% &  & {8857} & {0.75\%} \\
IEEE 162 & 32 & 0.04\% &  & 1325 & 2.24\% &  & {1384} & {1.29\%} \\
IEEE 300 & 192 & 0.12\% &  & 3356 & 3.53\% &  & {7749} & {3.51\%} \\
PEGASE 1354 & 21600 & 3.89\% &  & 21600 & 17.70\% &  & {21600} & {14.82\%} \\
RTE 1888 & 14262 & 0.24\% &  & 21600 & 14.19\% &  & {21600} & {12.84\%} \\
\bottomrule
\end{tabular}
\vspace{-1em}
\end{table}

\vspace{-0.5em}
\begin{table}[hbt!]
\caption{Average Solution Time and Trilevel-Gap ($b=5$ and $k=10$)}
\label{tab:trilevel_solve_time_opt_gap_b5_k10}
\centering
\small
\setlength\extrarowheight{-2pt}
\begin{tabular}{lrrrrrrrr}
\toprule
& \multicolumn{2}{c}{$z^{REG}$} & & \multicolumn{2}{c}{$z^{LP}$} & & \multicolumn{2}{c}{{$z^{VCH}$}} \\ \cline{2-3} \cline{5-6} \cline{8-9}
\multicolumn{1}{l}{Network} & \multicolumn{1}{c}{Time (sec)} & \multicolumn{1}{c}{Trilevel-Gap} & \multicolumn{1}{c}{} & \multicolumn{1}{c}{Time (sec)} & \multicolumn{1}{c}{Trilevel-Gap} & \multicolumn{1}{c}{{}} & \multicolumn{1}{c}{\rev{Time (sec)}} & \multicolumn{1}{c}{\rev{Trilevel-Gap}} \\
\midrule
IEEE 14 & 9 & 0.00\% &  & 9 & 0.00\% &  & {10} & {0.00\%}\\
IEEE 73 & 1155 & 0.00\% &  & 1010 & 0.00\% &  & {1410} & {0.35\%} \\
PEGASE 89 & 21 & 0.01\% &  & 374 & 0.80\% &  & {1354} & {0.96\%} \\
IEEE 118 & 16742 & 0.25\% &  & 19982 & 1.22\% &  & {22267} & {4.67\%} \\
IEEE 162 & 16 & 0.21\% &  & 523 & 1.94\% &  & {2032} & {1.63\%} \\
IEEE 300 & 22 & 0.09\% &  & 1241 & 6.96\% &  & {4121} & {6.53\%} \\
PEGASE 1354 & 12653 & 1.54\% &  & 21600 & 18.14\% &  & {21600} & {18.46\%} \\
RTE 1888 & 21600 & 1.29\% &  & 21600 & 24.97\% &  & {21600} & {22.70\%} \\
\bottomrule
\end{tabular}
\vspace{-1em}
\end{table}

\section{Conclusion}
\label{sec:conclusion}
In this paper, we proposed a new model to solve the DCOPF problem with battery operations. We regularized the objective function by penalizing the charge and discharge of batteries. In Theorem~\ref{theorem:ip=lp_condition}, we present a sufficient condition on the regularizers so that there is no integrality gap between the regularized MIP problem and its LP relaxation.
When the efficiency of the battery is relatively high, this penalty is very small. 
Empirical results show that the optimal solution from this regularized model is often a true optimal solution to the original model or close to the optimal solution, performing much better than the theoretical guarantees verified in Theorem~\ref{theorem:worst_bound}. Moreover, we prove in Theorem~\ref{thm:structure} that the optimal solution from the regularized model is more reasonable in that the battery operation does not contribute to further load-shedding or excess power depending on the state of the system at the time. This property may be of interest to the system operator. \rev{We note that these properties depend on the structure of the objective function and the constraint. It will be interesting to study the structure with respect to different objective functions, which we leave to future works.}

For a simpler problem that only considers the battery operation with \rev{$\eta_c = \eta_d =1$}, a polynomial algorithm has been proposed in \cite{polyalgo2023ostrowski}. 
However, only a few studies have focused on the complexity associated with the general efficiency level with \rev{$0<\eta_c, \eta_d < 1$}. \cite{bansal2023warehouse} proved an NP-hardness for a similar problem where the storage level varies over time based on two complementary variables.
The proof of NP-hardness relies on time-varying bounds and their result shows that \eqref{formulation:original_mip} problem with time-varying bounds on $\bm p^c$ and $\bm p^d$ is NP-hard to solve.
However, it remains an open problem that for a fixed bound on charge and discharge levels with a loss-incurring battery system (\rev{$0<\eta_c, \eta_d < 1$}), the problem is NP-hard.

We introduce a long-term planning problem that includes the battery siting problem with $N-k$ contingency.
This problem is intractable to solve using the exact battery formulation. We use the main benefit of the regularized formulation model to reformulate this challenging problem and show that the regularized formulation solves large-scale instances of these problems efficiently and yields near-optimal solutions in most cases.

\section*{Acknowledgment} 
The authors would like to thank Mathieu Dahan for his valuable suggestions on the earlier version of this work. \rev{Valuable comments from the associate editor and two anonymous reviewers are gratefully acknowledged}.

\bibliography{reference_20250109}

\newpage

\begin{appendices}
\section{Proof of Theorem \ref{thm:structure}}
\label{proof_thmstructure}
\thmstructure*
The proof of Theorem~\ref{thm:structure} is divided into two parts via Lemma~\ref{lemma:pc_pls=0} and Lemma~\ref{lemma:pd_pex=0}.

\begin{restatable}{lemma}{lemmapcpls0}\label{lemma:pc_pls=0}
Suppose $E^{\min}_c = E^{\min}_d = 0$. For any $\bm \lambda \in \mathbb{R}^2_+$, let $\bm p$ be an optimal solution to \eqref{formulation:regularization} problem. Then, $p^c_{t,i} p^{ls}_{t,i} = 0$ for all $ t \in \T,\;i \in \N$.
\end{restatable}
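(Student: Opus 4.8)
The plan is to argue by contradiction. Suppose $(\bm\theta,\bm f,\bm p,\bm u)$ is optimal for \eqref{formulation:regularization} but $p^c_{t^*,i^*}>0$ and $p^{ls}_{t^*,i^*}>0$ for some $t^*\in\T$ and $i^*\in\N$. Since $\bm u$ is binary, the limits \eqref{eq:battery_pc_limits}--\eqref{eq:battery_pd_limits} force $u_{t^*,i^*}=1$ and hence $p^d_{t^*,i^*}=0$. The starting observation is that the right-hand side of the power balance \eqref{eq:power_balance} contains the combination $-p^c_{t,i}+p^{ls}_{t,i}$, so decreasing both $p^c_{t^*,i^*}$ and $p^{ls}_{t^*,i^*}$ by a common amount $\delta>0$ leaves \eqref{eq:power_balance} intact at $(t^*,i^*)$ (and elsewhere, since nothing else is touched yet). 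Both decrements weakly reduce the objective, so if this single local move were feasible it would immediately contradict optimality.

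The main obstacle is that $p^c_{t^*,i^*}$ is coupled to the state-of-charge through \eqref{eq:battery_storage}: lowering it by $\delta$ lowers $p^s_{t,i^*}$ by $\eta_c\delta$ for every $t\ge t^*$, which can violate the lower bound in \eqref{eq:battery_ps_limits}. I would handle this by examining the first subsequent discharge time $t_1=\min\{t>t^*: p^d_{t,i^*}>0\}$. On $[t^*,t_1)$ there is no discharge, so $p^s_{\cdot,i^*}$ is nondecreasing there; using $p^d_{t^*,i^*}=0$ and $\delta\le p^c_{t^*,i^*}$ one checks the perturbed value satisfies $\tilde p^s_{t^*,i^*}=p^s_{t^*-1,i^*}+\eta_c(p^c_{t^*,i^*}-\delta)\ge p^s_{t^*-1,i^*}\ge E^{\min}$, so feasibility of \eqref{eq:battery_ps_limits} on $[t^*,t_1)$ is preserved. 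To restore the state-of-charge from $t_1$ onward I would also decrease $p^d_{t_1,i^*}$ by $\mu:=\eta_c\eta_d\,\delta$; by \eqref{eq:battery_storage} this raises $p^s_{t,i^*}$ by $\mu/\eta_d=\eta_c\delta$ for $t\ge t_1$, exactly cancelling the earlier drop and returning the trajectory to its original (feasible) values. Taking $\delta$ small enough that $\delta\le\min\{p^c_{t^*,i^*},p^{ls}_{t^*,i^*}\}$ and $\mu\le p^d_{t_1,i^*}$ keeps every variable nonnegative and $u_{t_1,i^*}$ unchanged; if no such $t_1$ exists, then $p^s_{\cdot,i^*}$ is nondecreasing on all of $[t^*,T]$ and no compensation is needed.

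Finally I would account for the objective change. Reducing $p^{ls}_{t^*,i^*}$ by $\delta$ lowers the cost by $\delta$, and the reductions in $p^c_{t^*,i^*}$ and $p^d_{t_1,i^*}$ lower the penalty by $\lambda_c\delta+\lambda_d\mu$. The only possible cost increase comes from rebalancing \eqref{eq:power_balance} at $(t_1,i^*)$ after shrinking the discharge by $\mu$: in the worst case this forces $p^{ls}_{t_1,i^*}$ up by $\mu$ (if no excess is available to absorb it), adding $\mu$ to the objective. The net change is therefore at most $-\delta+\mu-\lambda_c\delta-\lambda_d\mu=-\delta\big[(1-\eta_c\eta_d)+\lambda_c+\eta_c\eta_d\lambda_d\big]\le 0$, using the round-trip relation $\eta_c\eta_d\le 1$ together with $\bm\lambda\ge\bm 0$, and it is strictly negative whenever $\eta_c\eta_d<1$ or $\bm\lambda\ne\bm 0$, contradicting optimality. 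I expect the delicate part to be exactly this coupling bookkeeping: verifying that the compensating discharge reduction is simultaneously feasible (nonnegativity and the state-of-charge bounds of \eqref{eq:battery_ps_limits}) and cheap enough — thanks to $\mu=\eta_c\eta_d\,\delta\le\delta$ — that the shedding saved at $t^*$ is never outweighed. The load-shedding-versus-excess split at $t_1$ must also be checked, but it can only make the move cheaper, so the worst case above suffices.
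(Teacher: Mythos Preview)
Your proof is correct and follows the same contradiction strategy as the paper: reduce the simultaneous $p^c_{t^*,i^*}$ and $p^{ls}_{t^*,i^*}$, then repair the state-of-charge lower bound by cutting a future discharge, and show the net objective change $-\delta\big[(1-\eta_c\eta_d)+\lambda_c+\eta_c\eta_d\lambda_d\big]\le 0$, which is exactly the paper's final inequality. The only difference in execution is that the paper takes the full reduction $\delta=\min\{p^c_{\tau_0,i},p^{ls}_{\tau_0,i}\}$ and spreads the compensation across \emph{all} subsequent discharge times $\tau_1,\dots,\tau_k$ (necessitating a recursive definition of the adjustments $\delta_{\tau_j,i}$ and a telescoping bound, Claim~\ref{claim:pcpls_sum_ub}), whereas you choose $\delta$ small enough to compensate at the single first discharge time $t_1$ and restore the trajectory exactly thereafter---a cleaner local-perturbation argument that sidesteps that bookkeeping entirely.
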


\begin{proof}
By contradiction, suppose there exists an optimal solution $(\hat{\bm \theta}, \hat{\bm f}, \hat{\bm p}, \hat{\bm u})$ of \eqref{formulation:regularization} problem such that $\hat{p}^c_{t,i} > 0$ and  $\hat{p}^{ls}_{t,i} > 0$ for at least one $(t,i)\in \T\times \N$.

Without loss of generality, we show the proof for one such $i \in \N$ as the proof can be extended for any multiple nodes. Let $\tau_0 \in \T$ be the first time period such that $\hat{p}^c_{\tau_0,i} > 0$ and $\hat{p}^{ls}_{\tau_0,i} > 0$. Let $\tau_1, \dots, \tau_k \in \{\tau_0 + 1, \dots, T \}$ such that $\hat{p}^d_{\tau_j, i} > 0$ for $j\in [k]$ and $\hat{p}^d_{t.i} = 0$ for $t \in \{\tau_0 +1, \dots, T \} \setminus \{\tau_1, \dots, \tau_k\}$. We define adjustments to the state-of-charge as the following:
\begin{align*}
    \delta_{\tau_j,i}  &= 
    \left\{\begin{array}{ll} 
    -\rev{\eta_c}\cdot\min \{\hat{p}^c_{\tau_0,i},\hat{p}^{ls}_{\tau_0,i}\}, &\quad  j = 0, \\
    \delta_{\tau_{j -1}, i} + \max \left\{E^{\min} - \hat{p}^s_{\tau_j, i} - \delta_{\tau_{j -1}, i}, 0 \right\}, &\quad \forall j \in [k].
    \end{array}\right.
\end{align*}
Note that $\delta_{\tau_0,i} < 0$ \rev{and $\delta_{\tau_j,i}\leq 0$ for all $j \in [k]$}. We proceed to construct a solution $(\tilde{\bm \theta}, \tilde{\bm f}, \tilde{\bm p}, \tilde{\bm u})$ from the current optimal solution $(\hat{\bm \theta}, \hat{\bm f}, \hat{\bm p}, \hat{\bm u})$, where $\tilde{\bm \theta} = \hat{\bm \theta}$, $\tilde{\bm f} =  \hat{\bm f}$, $\tilde{\bm u}=  \hat{\bm u}$, $\tilde{\bm p}^{ex} = \hat{\bm p}^{ex}$, $\tilde{\bm p}^g = \hat{\bm p}^g,$  and changing values only corresponding to node $i$ as follows:
\begin{align*}
\tilde{p}^c_{t,i} &= \left\{\begin{array}{ll} \hat{p}^c_{t,i}, &\quad  \forall t\in \T\setminus \{\tau_0\}, \\
\hat{p}^c_{\tau_0,i} - \min \{\hat{p}^c_{\tau_0,i},\; \hat{p}^{ls}_{\tau_0,i}\}, & \quad t = \tau_0,
\end{array}\right.\\
\tilde{p}^d_{t,i} &= \left\{\begin{array}{ll} \hat{p}^d_{t,i}, &\quad  \forall t \in \T \setminus \{\tau_1, \dots, \tau_k\}, \\
\hat{p}^d_{t,i} - \rev{\eta_d}\cdot \max \left\{E^{\min} - \hat{p}^s_{\tau_j, i} - \delta_{\tau_{j -1}, i}, 0 \right\}, & \quad t = \tau_j, \ \forall j\in [k],
\end{array}\right.\\
\tilde{p}^{ls}_{t,i} &= \left\{\begin{array}{ll} \hat{p}^{ls}_{t,i}, &\quad  \forall t \in \T \setminus \{\tau_0, \tau_1, \dots, \tau_k\}, \\
\hat{p}^{ls}_{t,i} - \min \{\hat{p}^c_{\tau_0,i},\; \hat{p}^{ls}_{\tau_0,i}\},  & \quad t = \tau_0,\\
\hat{p}^{ls}_{t,i} + \rev{\eta_d}\cdot \max \left\{E^{\min} - \hat{p}^s_{\tau_j, i} - \delta_{\tau_{j -1}, i}, 0 \right\}, &\quad  t = \tau_j, \ \forall j\in [k],
\end{array}\right.\\
\tilde{p}^{s}_{t,i} &= \left\{\begin{array}{ll} \hat{p}^s_{t,i}, &\quad  \forall t \in [\tau_0-1], \\
\hat{p}^{s}_{t,i} + \delta_{\tau_j,i}, &\quad  \forall t \in \{\tau_j, \dots, \tau_{j +1}-1\}, \; \forall j \in \{0, \dots, k\},\\
\end{array}\right.
\end{align*}
where $\tau_{k +1}-1 = T.$

The new solution $\tilde{\bm p}$ above is created in the following fashion: We first reduce both $\hat{p}^c_{\tau_0,i}$ and $\hat{p}^{ls}_{\tau_0, i}$, which causes $\hat{p}^s_{t, i}$ to reduce in time periods following $\tau_0$. In particular, it may fall below $E^{\min}$. In order to fix this, we need to modify the discharging levels (and loss values of corresponding time periods) to ensure that the storage levels meet the minimum requirement $E^{\min}$. We carefully decrease the values of $\hat{p}^{d}_{t,i}$ so that the minimum discharge level is satisfied and the state-of-charge level is never below $E^{\min}$ at the same time.

\begin{restatable}{claim}{claim_pcpls_newp_feasibility}\label{claim:pcpls_newp_feasibility}
$(\tilde{\bm \theta}, \tilde{\bm f}, \tilde{\bm p}, \tilde{\bm u})$ is a feasible solution to \eqref{formulation:regularization} with given $\bm \lambda$.
\end{restatable}
\begin{proof}
It suffices to show that $\tilde{\bm p}$ satisfies $\eqref{eq:battery_pc_limits} - \eqref{eq:battery_storage}$, \eqref{eq:battery_ps_limits} and $\eqref{eq:battery_plspex_lb} - \eqref{eq:power_balance}$.
\begin{itemize}
    \item \eqref{eq:battery_pc_limits}: It is straightforward to verify that $\tilde{p}^c_{t,i} \geq 0 = E^{\min}_c$ for all $t \in \T$.
    \item \eqref{eq:battery_pd_limits}: We want to show that $\tilde{p}^{d}_{t,i} \geq 0 = E^{\min}_d$ for all $t \in \T$. It is sufficient to prove this for $t = \{ \tau_1, \dots, \tau_k\}$. There are two cases:
    \begin{itemize}
        \item[(i)] If $E^{\min} - \hat{p}^s_{\tau_j, i} - \delta_{\tau_{j -1}, i} \leq 0$: This case is straightforward as $\tilde{p}^d_{t,i} = \hat{p}^d_{t,i} \geq 0.$
        \item[(ii)] If $E^{\min} - \hat{p}^s_{\tau_j, i} - \delta_{\tau_{j -1}, i} > 0$: In this case, note that 
        \rev{$\tilde{p}^d_{t,i} = \eta_d (\tilde{p}^s_{t-1,i} - \tilde{p}^s_{t,i}) > \eta_d (\tilde{p}^s_{t-1,i} + \delta_{\tau_{j-1},i} - E^{\text{min}}) = \eta_d (\hat{p}^s_{t-1,i} - E^{\text{min}})\geq 0$, where the first equality comes from \eqref{eq:battery_storage}.}
    \end{itemize}
    \item \eqref{eq:battery_storage}: It is straightforward to verify that $\tilde{p}^s_{t,i} = \tilde{p}^s_{t-1,i} + \rev{\eta_c}\cdot \tilde{p}^c_{t,i} - {1}/{\rev{\eta_d}}\cdot\tilde{p}^d_{t,i}$ for all $t \in \T$.
    \item \eqref{eq:battery_ps_limits}: We want to show that $\tilde{p}^s_{t,i} \geq E^{\min}$ for all $t \in \T$. Clearly $\tilde{p}^s_{t,i} = \hat{p}^s_{t,i} \geq E^{\min}$ for all $t \in [\tau_0-1]$. For $t \geq \tau_0$, we show this in three parts: 
    \begin{itemize}
        \item[(i)] For $t = \tau_0$, from \eqref{eq:battery_storage} and the constructions above, we have $\hat{p}^s_{\tau_0, i} = \hat{p}^s_{\tau_0-1, i} + \rev{\eta_c}\cdot \hat{p}^c_{\tau_0, i} \geq E^{\min} + \rev{\eta_c} \cdot \min\{\hat{p}^c_{\tau_0,i},\; \hat{p}^{ls}_{\tau_0,i}\}.$ Therefore, $\tilde{p}^s_{\tau_0, i} = \hat{p}^s_{\tau_0, i} - \rev{\eta_c} \cdot \min\{\hat{p}^c_{\tau_0,i},\; \hat{p}^{ls}_{\tau_0,i}\} \geq E^{\min}.$
        \item[(ii)] For $t = \tau_j$ for all $j \in [k]$, we have $\tilde{p}^s_{\tau_j, i}  = \hat{p}^s_{\tau_j,i} + \delta_{\tau_j, i} = \hat{p}^s_{\tau_j,i}  + \delta_{\tau_{j -1}, i} + \max\{E^{\min} - \hat{p}^s_{\tau_j,i} - \delta_{\tau_{j -1}, i}, 0 \} \geq E^{\min}$. 
        \item[(iii)] Finally, for any $t \in \{\tau_j+1, \dots, \tau_{j +1}-1\}$ for $j \in \{0, \dots, k\}$, observe that since $\hat{p}^d_{\tau_j +1, i} = \dots = \hat{p}^d_{\tau_{j +1} - 1, i} = 0,$ we have $\tilde{p}^s_{t, i} = \hat{p}^s_{t,i} + \delta_{\tau_j, i} \geq \hat{p}^s_{\tau_j, i} + \delta_{\tau_j, i} = \tilde{p}_{\tau_j, i} \geq E^{\min}$, where the last inequality follows from the above.
    \end{itemize}
    \item \eqref{eq:battery_plspex_lb}: It is straightforward to verify that $\tilde{p}^{ls}_{t,i} \geq 0$ for all $t \in \T$.
    \item \eqref{eq:power_balance}: It is straightforward to verify that $\hat{p}^c_{t,i} + \hat{p}^{ex}_{t,i} - \hat{p}^{d}_{t,i} - \hat{p}^{ls}_{t,i} = \tilde{p}^c_{t,i} + \tilde{p}^{ex}_{t,i} - \tilde{p}^{d}_{t,i} - \tilde{p}^{ls}_{t,i}$ for all $t \in \T$.
\end{itemize}
\QEDA
\end{proof}
Observe that $\tilde{p}^c_{\tau_0,i}\cdot \tilde{p}^{ls}_{\tau_0,i} = 0$.
Next, we claim that the total additional adjustment made to $\delta$ over time is upper bounded.
\begin{restatable}{claim}{claim_pcpls_sum_ub}\label{claim:pcpls_sum_ub}
$\sum_{j = 1}^k\max\{E^{\min} - \hat{p}^s_{\tau_j, i} - \delta_{\tau_{j -1}, i}, 0 \} \leq |\delta_{\tau_0, i}|$.
\end{restatable}
\begin{proof}
Suppose that $S \subseteq [k]$ such that $\max\{E^{\min} - \hat{p}^s_{\tau_j, i} - \delta_{\tau_{j -1}, i}, 0 \} = E^{\min} - \hat{p}^s_{\tau_j, i} - \delta_{\tau_{j -1},i}$. Let $S = \{j_1, j_2, \dots, j_m\}$. If $S = \emptyset$, then there is nothing to verify. Otherwise, it is straightforward to verify that:
\begin{align*}
    & \sum_{j = 1}^k\max\{E^{\min} - \hat{p}^s_{\tau_j, i} - \delta_{\tau_{j}-1, i}, 0 \} \\
    & = (E^{\min} - \hat{p}^s_{\tau_{j_1}, i} - \delta_{\tau_{0}, i}) + (E^{\min} - \hat{p}^s_{\tau_{j_2}, i} - \delta_{\tau_{j_2}-1, i}) + \cdots + (E^{\min} - \hat{p}^s_{\tau_{j_m}, i} - \delta_{\tau_{j_{m}-1}, i}) \\
    & = (E^{\min} - \hat{p}^s_{\tau_{j_1}, i} - \delta_{\tau_{0}, i}) + (\hat{p}^s_{\tau_{j_1}, i} - \hat{p}^s_{\tau_{j_2}, i}) + \cdots + (\hat{p}^s_{\tau_{j_{m-1}}, i} - \hat{p}^s_{\tau_{j_m}, i}) \\
    & = E^{\min} - \hat{p}^s_{\tau_{j_m}, i} - \delta_{\tau_{0}, i} \leq |\delta_{\tau_{0}, i}|.
\end{align*}
The second equality is due to the fact that $\delta_{\tau_{j_l},i} = E^{\min} - \hat{p}^s_{\tau_{j_l},i}$ and $\delta_{\tau_{j_{l+1}}-1,i} = \delta_{\tau_{j_l},i}$ for all $l=1,...,m$.
\QEDA
\end{proof}
Finally, we are ready to compute the difference in objective function value of the two solutions:
\begin{align*}
& c(\tilde{\bm p}) + \bm \lambda^\top g(\tilde{\bm p}) - c(\hat{\bm p}) - \bm \lambda^\top g(\hat{\bm p}) \\
= & \sum_{t \in \T} \sum_{i \in N} \left( \tilde{p}^{ls}_{t,i} - \hat{p}^{ls}_{t,i} + \tilde{p}^{ex}_{t,i} - \hat{p}^{ex}_{t,i} + \lambda_c ( \tilde{p}^{c}_{t,i} - \hat{p}^{c}_{t,i}) + \lambda_d (\tilde{p}^{d}_{t,i} - \hat{p}^{d}_{t,i}) \right)\\
= & - \min \{\hat{p}^c_{\tau_0,i},\; \hat{p}^{ls}_{\tau_0,i}\} + \sum_{j \in [k]} \rev{\eta_d} \cdot \max \left\{E^{\min} - \hat{p}^s_{\tau_j, i} - \delta_{\tau_{j -1}, i}, 0 \right\} \\
& - \lambda_c \cdot \min \{\hat{p}^c_{\tau_0,i},\; \hat{p}^{ls}_{\tau_0,i}\} - \lambda_d \cdot \sum_{j \in [k]} \rev{\eta_d} \cdot \max \left\{E^{\min} - \hat{p}^s_{\tau_j, i} - \delta_{\tau_{j -1}, i}, 0 \right\}\\
= & - (1+\lambda_c) \cdot \frac{|\delta_{\tau_0,i}|}{\rev{\eta_c}} + (1 - \lambda_d) \cdot \rev{\eta_d} \cdot \left(\sum_{j \in [k]} \max\left\{E^{\min} - \hat{p}^s_{\tau_j + 1, i} - \delta_{\tau_{j -1}, i}, 0 \right\}\right) \\
\le & - \frac{|\delta_{\tau_0,i}|}{\rev{\eta_c}} - \lambda_c \frac{|\delta_{\tau_0,i}|}{\rev{\eta_c}} + (1 - \lambda_d) \cdot \rev{\eta_d} \cdot |\delta_{\tau_0,i}| \\
= & \rev{\frac{|\delta_{\tau_0,i}|}{\eta_c} \left( -1 + \eta_c \eta_d - \lambda_c - \eta_c\eta_d \lambda_d \right)} < 0,
\end{align*}
where the last inequality is from \Cref{claim:pcpls_sum_ub}. Therefore,  $c(\tilde{\bm p}) + \bm \lambda^\top g(\tilde{\bm p}) < c(\hat{\bm p}) + \lambda^\top g(\hat{\bm p})$, hence a contradiction. 
\QEDA
\end{proof}

\begin{restatable}{lemma}{lemmapdpex0}\label{lemma:pd_pex=0}
Suppose $E^{\min}_c = E^{\min}_d = 0$. For any $(\lambda_c,\; \lambda_d)$ such that \rev{$\lambda_c + \eta_c \eta_d \cdot \lambda_d > 1-\eta_c \eta_d$} with a given \rev{$\eta_c, \eta_d \in (0,1]$}, let $\bm p$ be an optimal solution to \eqref{formulation:regularization} problem. Then, $p^d_{t,i} p^{ex}_{t,i} = 0$ for all $t \in \T,\;i \in \N$.
\end{restatable}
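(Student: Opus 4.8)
The plan is to mirror the exchange argument used for Lemma~\ref{lemma:pc_pls=0}, but now trading \emph{discharge} against \emph{excess power} rather than charge against load shedding, and guarding the \emph{upper} storage bound $E^{\max}$ rather than the lower bound $E^{\min}$. Suppose toward a contradiction that $(\hat{\bm\theta},\hat{\bm f},\hat{\bm p},\hat{\bm u})$ is optimal for \eqref{formulation:regularization} yet $\hat p^d_{\tau_0,i}>0$ and $\hat p^{ex}_{\tau_0,i}>0$ at the earliest such time $\tau_0$ for some node $i$. Because $\bm u$ is binary, constraints \eqref{eq:battery_pc_limits}--\eqref{eq:battery_pd_limits} already force $\hat p^c_{t,i}\hat p^d_{t,i}=0$, so I may take $\tau_1,\dots,\tau_k$ to be the subsequent \emph{charging} periods, with $\hat p^c=0$ at all other times after $\tau_0$. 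Since discharge injects power and excess absorbs it, the power balance \eqref{eq:power_balance} lets me shrink both quantities by $\mu:=\min\{\hat p^d_{\tau_0,i},\hat p^{ex}_{\tau_0,i}\}$ at $\tau_0$ without altering $\bm\theta,\bm f,\bm p^g$; by \eqref{eq:battery_storage} this \emph{raises} the stored energy in all periods after $\tau_0$ by $\delta_{\tau_0,i}=\mu/\eta_d>0$, which may now violate $p^s\le E^{\max}$ in \eqref{eq:battery_ps_limits}.

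To restore feasibility I would, exactly as before but in the opposite direction, reduce the charge at each $\tau_j$ by just enough to clamp the stored energy at $E^{\max}$: setting $m_j:=\max\{\hat p^s_{\tau_j,i}+\delta_{\tau_{j-1},i}-E^{\max},0\}$, I lower $\hat p^c_{\tau_j,i}$ by $m_j/\eta_c$ (compensating the balance with an equal increase of $p^{ex}_{\tau_j,i}$) and update $\delta_{\tau_j,i}=\delta_{\tau_{j-1},i}-m_j$. The feasibility check is the exact analog of Claim~\ref{claim:pcpls_newp_feasibility}: constraints \eqref{eq:battery_pc_limits}--\eqref{eq:power_balance} are verified term by term, the two nontrivial points being $\tilde p^c_{\tau_j,i}\ge0$ and $\tilde p^s_{t,i}\le E^{\max}$. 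The former holds because clamping forces $\tilde p^s_{\tau_j,i}=E^{\max}$, so $\tilde p^c_{\tau_j,i}=(E^{\max}-\tilde p^s_{\tau_j-1,i})/\eta_c\ge0$ once an induction establishes $\tilde p^s_{\tau_j-1,i}\le E^{\max}$; for intermediate times, since no charging occurs between consecutive $\tau_j$'s the storage is nonincreasing there, giving $\tilde p^s_{t,i}\le\tilde p^s_{\tau_j,i}\le E^{\max}$. I would also record the telescoping bound $\sum_{j\in[k]}m_j\le\delta_{\tau_0,i}$, proved verbatim as in Claim~\ref{claim:pcpls_sum_ub}.

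It then remains to compute the objective change. Only $\tau_0$ (where $p^{ex}$ and $p^d$ each drop by $\mu$) and the $\tau_j$ (where $p^c$ drops by $m_j/\eta_c$ while $p^{ex}$ rises by $m_j/\eta_c$) contribute, giving
\begin{align*}
c(\tilde{\bm p})+\bm\lambda^\top g(\tilde{\bm p})-c(\hat{\bm p})-\bm\lambda^\top g(\hat{\bm p})
= -(1+\lambda_d)\mu+\frac{1-\lambda_c}{\eta_c}\sum_{j\in[k]}m_j.
\end{align*}
When $\lambda_c\le1$ the second term is nonnegative, so substituting $\sum_j m_j\le\mu/\eta_d$ yields the upper bound $\tfrac{\mu}{\eta_c\eta_d}\bigl[(1-\eta_c\eta_d)-(\lambda_c+\eta_c\eta_d\lambda_d)\bigr]$, which is strictly negative precisely under the hypothesis $\lambda_c+\eta_c\eta_d\lambda_d>1-\eta_c\eta_d$; when $\lambda_c>1$ the second term is already nonpositive, so the whole expression is at most $-(1+\lambda_d)\mu<0$. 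Either way the perturbed solution strictly improves the objective, contradicting optimality. I expect the main obstacle to be the careful bookkeeping of the efficiency factors $\eta_c,\eta_d$ when converting storage adjustments into charge/discharge changes, together with the inductive argument that clamping keeps $\tilde p^s$ within $[E^{\min},E^{\max}]$ at every intermediate period; it is exactly the sign of the final bound that forces the \emph{strict} inequality in the hypothesis, in contrast to the non-strict condition of Theorem~\ref{theorem:ip=lp_condition}.
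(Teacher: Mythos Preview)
Your proposal is correct and follows essentially the same exchange argument as the paper: both reduce $p^d$ and $p^{ex}$ at $\tau_0$, track the resulting upward shift $\delta$ in the state-of-charge, clamp it back to $E^{\max}$ by trimming charge at later periods $\tau_j$ (offsetting via $p^{ex}$), and then bound the objective change using the telescoping estimate $\sum_j m_j\le\delta_{\tau_0,i}$. Your explicit case split on whether $\lambda_c\le1$ is in fact slightly more careful than the paper's presentation, which applies the bound $\sum_j m_j\le|\delta_{\tau_0,i}|$ to the term $\tfrac{1-\lambda_c}{\eta_c}\sum_j m_j$ without separately noting that the inequality direction requires $1-\lambda_c\ge0$.
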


\begin{proof}
By contradiction, suppose there exists an optimal solution $(\hat{\bm \theta}, \hat{\bm f}, \hat{\bm p}, \hat{\bm u})$ of \eqref{formulation:regularization} problem such that $\hat{p}^d_{t,i} > 0$ and  $\hat{p}^{ex}_{t,i} > 0$ for at least one $(t,i)\in \T\times \N$.

Without loss of generality, we show the proof for one such $i \in \N$ as the proof can be extended for any multiple nodes. Let $\tau_0 \in \T$ be the first time period such that $\hat{p}^d_{\tau_0,i} > 0$ and $\hat{p}^{ex}_{\tau_0,i} > 0$. Let $\tau_1, \dots, \tau_k \in \{\tau_0 + 1, \dots, T \}$ such that $\hat{p}^c_{\tau_j, i} > 0$ for $j\in [k]$ and $\hat{p}^c_{t.i} = 0$ for $t \in \{\tau_0 +1, \dots, T\} \setminus \{\tau_1, \dots, \tau_k\}$. We define adjustments to the state-of-charge as the following:
\begin{align*}
    \delta_{\tau_j,i}  &= 
    \left\{\begin{array}{ll} 
    {1}/{\rev{\eta_d}}\cdot\min \{\hat{p}^d_{\tau_0,i},\; \hat{p}^{ex}_{\tau_0,i}\}, &\quad  j = 0, \\
    \delta_{\tau_{j -1}, i} - \max \left\{\hat{p}^s_{\tau_j, i} + \delta_{\tau_{j -1},i} - E^{\max}, 0 \right\}, &\quad \forall j \in [k].
    \end{array}\right.
\end{align*}
Note that $\delta_{\tau_0,i} < 0$ \rev{and $\delta_{\tau_j,i} \leq 0$ for all $j \in [k]$}. We proceed to construct a solution $(\tilde{\bm \theta}, \tilde{\bm f}, \tilde{\bm p}, \tilde{\bm u})$ from the current optimal solution $(\hat{\bm \theta}, \hat{\bm f}, \hat{\bm p}, \hat{\bm u})$, where $\tilde{\bm \theta} = \hat{\bm \theta}$, $\tilde{\bm f} =  \hat{\bm f}$, $\tilde{\bm u}=  \hat{\bm u}$, $\tilde{\bm p}^{ex} = \hat{\bm p}^{ex}$, $\tilde{\bm p}^g = \hat{\bm p}^g,$  and changing values only corresponding to node $i$ as follows:
\begin{align*}
\tilde{p}^d_{t,i} &= \left\{\begin{array}{ll} \hat{p}^d_{t,i}, &\quad  \forall t\in \T\setminus \{\tau_0\}, \\
\hat{p}^d_{\tau_0,i} - \min \{\hat{p}^d_{\tau_0,i},\; \hat{p}^{ex}_{\tau_0,i}\}, & \quad t = \tau_0,
\end{array}\right.\\
\tilde{p}^c_{t,i} &= \left\{\begin{array}{ll} \hat{p}^c_{t,i}, &\quad  \forall t \in \T \setminus \{\tau_1, \dots, \tau_k\}, \\
\hat{p}^c_{t,i} - \frac{1}{\rev{\eta_c}}\cdot \max \left\{\hat{p}^s_{\tau_j, i} + \delta_{\tau_{j -1}, i} - E^{\max}, 0 \right\}, & \quad t = \tau_j, \ \forall j\in [k],
\end{array}\right.\\
\tilde{p}^{ex}_{t,i} &= \left\{\begin{array}{ll} \hat{p}^{ex}_{t,i}, &\quad  \forall t \in \T \setminus \{\tau_0, \tau_1, \dots, \tau_k\}, \\
\hat{p}^{ex}_{t,i} - \min \{\hat{p}^d_{\tau_0,i},\; \hat{p}^{ex}_{\tau_0,i}\},  & \quad t = \tau_0,\\
\hat{p}^{ex}_{t,i} + \frac{1}{\rev{\eta_c}}\cdot \max \left\{\hat{p}^s_{\tau_j, i} + \delta_{\tau_{j -1}, i} - E^{\max}, 0 \right\}, &\quad  t = \tau_j, \ \forall j\in [k],
\end{array}\right.\\
\tilde{p}^{s}_{t,i} &= \left\{\begin{array}{ll} \hat{p}^s_{t,i}, &\quad  \forall t \in [\tau_0-1], \\
\hat{p}^{s}_{t,i} + \delta_{\tau_j,i}, &\quad  \forall t \in \{\tau_j, \dots, \tau_{j +1}-1\}, \; \forall j \in \{0, \dots, k\},\\
\end{array}\right.
\end{align*}
where $\tau_{k +1}-1 = T$.

The new solution $\tilde{\bm p}$ above is created in the following fashion: We first reduce both $\hat{p}^d_{\tau_0,i}$ and $\hat{p}^{ex}_{\tau_0, i}$, which causes $\hat{p}^s_{t, i}$ to increase in time periods following $\tau_0$. In particular, it may increase beyond $E^{\max}$. In order to fix this, we need to modify the charging levels (and excess values of corresponding time periods) to ensure that the storage levels meet the maximum requirement $E^{\max}$. We carefully decrease the values of $\hat{p}^{c}_{t,i}$ so that the minimum charge level is satisfied and the state-of-charge level is never beyond $E^{\max}$ at the same time.

\begin{restatable}{claim}{claim_pdpex_newp_feasibility}\label{claim:pdpex_newp_feasibility}
$(\tilde{\bm \theta}, \tilde{\bm f}, \tilde{\bm p}, \tilde{\bm u})$ is a feasible solution to \eqref{formulation:regularization} with given $\bm \lambda$.
\end{restatable}
\begin{proof}
It suffices to show that $\tilde{\bm p}$ satisfies $\eqref{eq:battery_pc_limits} - \eqref{eq:battery_storage}$, \eqref{eq:battery_ps_limits} and $\eqref{eq:battery_plspex_lb} - \eqref{eq:power_balance}$.

\begin{itemize}
    \item \eqref{eq:battery_pc_limits}: We want to show that $\tilde{p}^{c}_{t,i} \geq 0 = E^{\min}_c$ for all $t \in \T$. It is sufficient to prove this for $t = \{ \tau_1, \dots, \tau_k\}$. There are two cases:
    \begin{itemize}
        \item[(i)] If $\hat{p}^s_{\tau_j, i} + \delta_{\tau_{j -1}, i} - E^{\max} \leq 0$: This case is straightforward as $\tilde{p}^c_{t,i} = \hat{p}^c_{t,i} \geq 0.$
        \item[(ii)] If $\hat{p}^s_{\tau_j, i} + \delta_{\tau_{j -1}, i} - E^{\max} > 0$: In this case, note that 
        \rev{$\tilde{p}^c_{t,i} = \frac{1}{\eta_c} (\tilde{p}^s_{t,i} - \tilde{p}^s_{t-1,i}) > \frac{1}{\eta_c} (E^{\text{max}} - \tilde{p}^s_{t-1,i} -\delta_{\tau_{j-1},i}) \geq 0$, where the first equality comes from \eqref{eq:battery_storage}.}   
    \end{itemize}
    \item \eqref{eq:battery_pd_limits}: It is straightforward to verify that $\tilde{p}^d_{t,i} \geq 0 = E^{\min}_d$ for all $t \in \T$.
    \item \eqref{eq:battery_storage}: It is straightforward to verify that $\tilde{p}^s_{t,i} = \tilde{p}^s_{t-1,i} + \rev{\eta_c}\cdot \tilde{p}^c_{t,i} - {1}/{\rev{\eta_d}}\cdot\tilde{p}^d_{t,i}$ for all $t \in \T$.
    \item \eqref{eq:battery_ps_limits}: We want to show that $\tilde{p}^s_{t,i} \leq E^{\max}$ for all $t \in \T$. Clearly $\tilde{p}^s_{t,i} = \hat{p}^s_{t,i} \leq E^{\max}$ for all $t \in [\tau_0-1]$. For $t \geq \tau_0$, we show this in three parts: 
    \begin{itemize}
        \item[(i)] For $t = \tau_0$, from \eqref{eq:battery_storage} and the constructions above, we have $\hat{p}^s_{\tau_0, i} = \hat{p}^s_{\tau_0-1, i} - {1}/{\rev{\eta_d}}\cdot \hat{p}^d_{\tau_0, i} \leq E^{\max} - {1}/{\rev{\eta_d}} \cdot \min\{\hat{p}^d_{\tau_0,i},\; \hat{p}^{ex}_{\tau_0,i}\}.$ Therefore, $\tilde{p}^s_{\tau_0, i} = \hat{p}^s_{\tau_0, i} + {1}/{\rev{\eta_d}} \cdot \min\{\hat{p}^d_{\tau_0,i},\; \hat{p}^{ex}_{\tau_0,i}\} \leq E^{\max}.$
        \item[(ii)] For $t = \tau_j$ for all $j \in [k]$, we have $\tilde{p}^s_{\tau_j, i}  = \hat{p}^s_{\tau_j,i} + \delta_{\tau_j, i} = \hat{p}^s_{\tau_j,i}  + \delta_{\tau_{j -1}, i} - \max \{\hat{p}^s_{\tau_j, i} + \delta_{\tau_{j -1}, i} - E^{\max}, 0 \} \leq E^{\max}$. 
        \item[(iii)] Finally, for any $t \in \{\tau_j+1, \dots, \tau_{j +1}-1\}$ for $j \in \{0, \dots, k\}$, observe that since $\hat{p}^c_{\tau_j +1, i} = \dots = \hat{p}^c_{\tau_{j +1} - 1, i} = 0,$ we have that $\tilde{p}^s_{t, i} = \hat{p}^s_{t,i} + \delta_{\tau_j, i} \leq \hat{p}^s_{\tau_j, i} + \delta_{\tau_j, i} = \tilde{p}_{\tau_j, i} \leq E^{\max}$, where the last inequality follows from the above.
    \end{itemize}
    \item \eqref{eq:battery_plspex_lb}: It is straightforward to verify that $\tilde{p}^{ex}_{t,i} \geq 0$ for all $t \in \T$. 
    \item \eqref{eq:power_balance}: It is straightforward to verify that $\hat{p}^c_{t,i} + \hat{p}^{ex}_{t,i} - \hat{p}^{d}_{t,i} - \hat{p}^{ls}_{t,i} = \tilde{p}^c_{t,i} + \tilde{p}^{ex}_{t,i} - \tilde{p}^{d}_{t,i} - \tilde{p}^{ls}_{t,i}$ for all $t \in \T$.
\end{itemize}
\QEDA
\end{proof}
Observe that $\tilde{p}^d_{\tau_0,i}\cdot \tilde{p}^{ex}_{\tau_0,i} = 0.$
Next, we claim that the total additional adjustment made to $\delta$ over time is upper bounded.

\begin{restatable}{claim}{claim_pdpex_sum_ub}\label{claim:pdpex_sum_ub}
$\sum_{j = 1}^k\max\{\hat{p}^s_{\tau_j, i} + \delta_{\tau_{j -1}, i} - E^{\max}, 0 \} \leq |\delta_{\tau_0, i}|$.
\end{restatable}
\begin{proof}
Suppose that $S \subseteq [k]$ such that $\max\{\hat{p}^s_{\tau_j, i} + \delta_{\tau_{j -1}, i} - E^{\max}, 0 \} = \hat{p}^s_{\tau_j, i} + \delta_{\tau_{j -1}, i} - E^{\max}$. Let $S = \{j_1, j_2, \dots, j_m\}$. If $S = \emptyset$, then there is nothing to verify. Otherwise, it is straightforward to verify that:
\begin{align*}
    & \sum_{j = 1}^k\max\{\hat{p}^s_{\tau_j, i} + \delta_{\tau_{j}-1, i} - E^{\max}, 0 \} \\
    & = (\hat{p}^s_{\tau_{j_1}, i} + \delta_{\tau_{0}, i} - E^{\max}) + (\hat{p}^s_{\tau_{j_2}, i} + \delta_{\tau_{j_2}-1, i} - E^{\max}) + \cdots +(\hat{p}^s_{\tau_{j_m}, i} + \delta_{\tau_{j_m}-1, i} - E^{\max}) \\
    & = (\hat{p}^s_{\tau_{j_1}, i} + \delta_{\tau_{0}, i} - E^{\max}) + (\hat{p}^s_{\tau_{j_1}}, i - \hat{p}^s_{\tau_{j_2}, i}) + \cdots +(\hat{p}^s_{\tau_{j_{m-1}}, i} - \hat{p}^s_{\tau_{j_m} , i}) \\
    & =  \hat{p}^s_{\tau_{j_m}, i} + \delta_{\tau_{0}, i} - E^{\max} \leq |\delta_{\tau_{0}, i}|.
\end{align*}
The second equality is due to the fact that $\delta_{\tau_{j_l},i} = E^{\max} - \hat{p}^s_{\tau_{j_l},i}$ and $\delta_{\tau_{j_{l+1}}-1,i} = \delta_{\tau_{j_l},i}$ for all $l=1,...,m$.
\QEDA
\end{proof}
Finally, we are ready to compute the difference in objective function value of the two solutions:
\begin{align*}
     & c(\tilde{\bm p}) + \bm \lambda^\top g(\tilde{\bm p}) - c(\hat{\bm p}) - \bm \lambda^\top g(\hat{\bm p}) \\
     = & \sum_{t \in \T} \sum_{i \in N} \left( \tilde{p}^{ls}_{t,i} - \hat{p}^{ls}_{t,i} + \tilde{p}^{ex}_{t,i} - \hat{p}^{ex}_{t,i} + \lambda_c ( \tilde{p}^{c}_{t,i} - \hat{p}^{c}_{t,i}) + \lambda_d (\tilde{p}^{d}_{t,i} - \hat{p}^{d}_{t,i}) \right)\\
     = & - \min \{\hat{p}^d_{\tau_0,i},\; \hat{p}^{ex}_{\tau_0,i}\} + \sum_{j \in [k]} \frac{1}{\rev{\eta_c}}\cdot \max \left\{\hat{p}^s_{\tau_j, i} + \delta_{\tau_{j -1}, i} - E^{\max}, 0 \right\}\\
     & - \lambda_c \cdot \sum_{j \in [k]} \frac{1}{\rev{\eta_c}}\cdot \max \left\{\hat{p}^s_{\tau_j, i} + \delta_{\tau_{j -1}, i} - E^{\max}, 0 \right\} - \lambda_d \cdot \min \{\hat{p}^d_{\tau_0,i},\; \hat{p}^{ex}_{\tau_0,i}\}\\
     = & -\rev{\eta_d}|\delta_{\tau_0,i}| - \lambda_d \rev{\eta_d}|\delta_{\tau_0,i}| + \frac{1 - \lambda_c}{\rev{\eta_c}} \cdot \left(\sum_{j = 1}^k\max\left\{\hat{p}^s_{\tau_j, i} + \delta_{\tau_{j -1}, i} - E^{\max}, 0 \right\}\right) \\
          \le & -\rev{\eta_d}|\delta_{\tau_0,i}| - \lambda_d \rev{\eta_d}|\delta_{\tau_0,i}| + \frac{1 - \lambda_c}{\rev{\eta_c}} \cdot |\delta_{\tau_0,i}| \\
          = & \rev{\frac{|\delta_{\tau_0,i}|}{\eta_c} \left( 1 - \eta_c\eta_d - \lambda_c - \eta_c \eta_d \lambda_d \right) }.
     \end{align*}
     When \rev{$1 - \eta_c\eta_d - \lambda_c - \eta_c \eta_d \lambda_d < 0$} (i.e., under the assumption that \rev{$\lambda_c + \eta_c\eta_d \lambda_d > 1 - \eta_c\eta_d$}), $c(\tilde{\bm p}) + \bm \lambda^\top g(\tilde{\bm p}) < c(\hat{\bm p}) + \lambda^\top g(\hat{\bm p})$, hence a contradiction. 
\QEDA
\end{proof}

\section{Detailed Formulation for Trilevel \texorpdfstring{$N-k$}{Lg} Contingency Problem}
\label{sec:numerical_formulation}
\subsection{Problem Formulation}
We formally give a mathematical formulation of this problem. The first decision by a planner, denoted as $\bm x$, is to place batteries in the grid subject to the given budget $b \in \mathbb{Z}_+$.
\begin{subequations}
\label{eq:trilevel_x_y}
\begin{align}
    & \bm x \in \{0,\;1\}^\N, \label{eq:trilevel_x_y_a}\\
    & \sum_{i \in \N} x_i \le b.
\end{align}
Then a network interdictor decides whether or not to destroy a transmission line $(i,j) \in \L$ by a binary variable $y_{ij}$. The interruption can happen to at most $k$ transmission lines in the system and impacts the performance of transmission lines throughout the time period considered:
\begin{align}
    & \bm y \in \{0,\;1\}^\L, \\
    & \sum_{e \in \L} y_e \le k, \\
    & -F_{ij}(1-y_{ij}) \le f_{t,ij} \le F_{ij} (1-y_{ij}), && \forall  t \in \T, \; (i,j) \in \L. \label{eq:trilevel_x_y_e}
\end{align}
Initial state-of-charge as well as upper and lower bounds of state-of-charge of a battery depends on whether a battery is sited or not:
\begin{align}
    & p^{s}_{0,i} = E_{0}x_{i}, &&  \forall i \in \N, \label{eq:2stage_ps_initial} \\
    & E^{\min}x_{i} \le p^{s}_{t,i} \le E^{\max}x_{i}, && \forall  t \in \T, i \in \N.
\end{align}
When there is no battery at node $i \in \N$, $p^s_{t,i}=0$ for all $t \in \T$, so we can expand the state-of-charge over time to the entire $\N$:
\begin{align}
    p^{s}_{t,i} = p^{s}_{t-1,i} + \eta \cdot p^{c}_{t,i}- {1}/{\eta} \cdot p^{d}_{t,i}, && \forall  t \in \T, i \in \N \label{eq:2stage_ps}.
\end{align}
The bounds on charging and discharging, similarly, depend on whether a battery exists at a node and whether the battery is charging or discharging, represented by $u_{t,i}$:
\begin{align}
    & \bm u \in \{0,1\}^{T \times N}, \\
    & E_c^{\min} u_{t,i} \le p^{c}_{t,i} \le E_c^{\max} u_{t,i}, && \forall  t \in \T, i \in \N, \\
    & E_d^{\min} (x_{i}-u_{t,i}) \le p^{d}_{t,i} \le E_d^{\max} (x_{i}-u_{t,i}), && \forall  t \in \T, i \in \N \label{eq:2stage_pd_limits}.
\end{align}
Finally, a battery can only operate when there is a battery installed at the node:
\begin{align}
    & u_{t,i} \le x_i, && \forall  t \in \T, i \in \N. \label{eq:2stage_u_x}
\end{align}

\end{subequations}
 
Bounds on generator output \eqref{eq:generation_limits}, limits on transmission lines \eqref{eq:flow_limits}, nonnegativity constraint on load shedding and excess power \eqref{eq:battery_plspex_lb}, and power balance equation \eqref{eq:power_balance} do not change from the optimal power flow with the battery problem. Operational constraints for battery \eqref{eq:2stage_ps_initial} - \eqref{eq:2stage_u_x} are also the same as two-stage stochastic programming studied in the previous section. For purposes of this problem, we omit Ohm's law constraint (see, e.g., \citealt{johnson2022scalable}), and therefore the third-level problem becomes a network flow problem. Throughout the time period considered, the network operator then aims to generate power and send power flows to minimize the load shedding and lost power. We now provide the formulation below:
\begin{align}
\label{eq:trilevel_whole}
    \min_{\bm x} \max_{\bm y} \min_{\bm f,\bm p,\bm u} \left\{ c(\bm p)\colon \eqref{eq:generation_limits},\; \eqref{eq:flow_limits},\; \eqref{eq:battery_plspex_lb}, \;\eqref{eq:trilevel_x_y_a}\text{-}\eqref{eq:2stage_u_x}\right\}.
\end{align}

\subsection{Solution Methodology}
\label{sec:solution_methodology}
Computationally, a $\min$-$\max$-$\min$ problem is not an easy problem to solve. 
In literature, two-stage robust programs, a special case of the $\min$-$\max$-$\min$ problem, have been discussed extensively in the literature (see, e.g., \citealt{atamturk2007two,jiang2014two,van2017robust,mattia2017staffing}). 
\citet{Jeroslow1985} showed NP-hardness and \citet{ben-ayed1990} also discussed the computational difficulties of bilevel linear problems. 
Problem \eqref{eq:trilevel_whole} is a trilevel problem with binary variables at each level resulting in a particularly challenging optimization problem, which forbids the classical approach to formulating the trilevel problem into a bilevel problem and applying techniques to solve bilevel optimization problems.
Using the regularized MIP model, however, enables us to linearize the third-level problem and convert it to a bilevel problem by taking the dual of the third-level problem.

It is often undesirable to have unbounded dual variables. A popular heuristic is to use the big-$M$ method to bound such unbounded variables to a reasonable number. We prove that in our formulation the dual variables are bounded. This allows the exact reformulation for bilinear terms that appear in the objective function. 

Once we obtain the bilevel formulation, we apply a generic iterative algorithm to solve the bilevel optimization problem. In particular, we use the algorithm outlined in \citealt{bienstock2008computing}. The stopping criterion was either: (i) the iteration reaches maximum iteration of 1000; (ii) the iteration has run more than 6 hours; or (iii) the upper bound and lower bound gap is less than 0.5\%. 

\subsection{Detailed Bilevel Formulation}
We present here the reformulation to the bilevel $\min$-$\max$ problem.
Since variables associated with the third level are linear, we dualize the third level. For notational simplicity, let $\bm \theta = (\bm\alpha,\bm \beta^\pm, \bm \delta, \bm \gamma^\pm, \bm\tau,\bm \tau^0, \bm \mu^{\pm}, \bm\nu^\pm, \bm\omega^\pm, \bm \mu^\pm, \bm\phi)$. The primal variable associated with a constraint for the third level is provided on the left.
{
\allowdisplaybreaks
\begin{subequations}
\label{eq:trilevel_dual}
\begin{align}
\min_{\bm x} \max_{\bm y,\bm \theta, \bm z}\quad 
    & -\sum_{t \in \T} \sum_{i \in \N}D_{t,i} \alpha_{t,i}  -
     \sum_{t \in \T, (i,j) \in \L} F_{ij}(1-y_{ij}) (\beta^{+}_{t,ij} + \beta^{-}_{t,ij}) \span \span \notag \\
    & + \sum_{t \in \T} \sum_{i \in \N}G^{\min}_i \gamma^{+}_{t,i} - \sum_{t \in \T} \sum_{i \in \N}G^{\max}_i \gamma^{-}_{t,i} + \sum_{i \in \N} E^0 x_i \tau^0_{i} \span\span \notag \\
    & + \sum_{t \in \T} \sum_{i \in \N} E^{\min} x_{i} \mu^{+}_{t,i} - \sum_{t \in \T} \sum_{i \in \N} E^{\max} x_{i} \mu^{-}_{t,i} \notag \\
    & + \sum_{t \in \T} \sum_{i \in \N}E^{\min}_d x_{i} \omega^{+}_{t,i} - \sum_{t \in \T} \sum_{i \in \N}E^{\max}_d x_{i} \omega^{-}_{t,i} - \sum_{t \in \T} \sum_{i \in \N}x_i \phi_{t,i},  \span \span \label{eq:trilevel_dual_obj}\\
    \textup{s.t.} \quad \nonumber \\
    p^{ls}_{t,i},\;p^{ex}_{t,i}\quad \cdots\cdots\quad
    & -1 \le \alpha_{t,i} \le 1, && \forall t \in \T, i \in \N,  \label{eq:trilevel_dual_alpha_bounds} \\ 
    f_{t,ij} \quad \cdots\cdots\quad
    & \alpha_{t,i} - \alpha_{t,j} + \beta^+_{t,ij} - \beta^-_{t,ij} = 0,  && \forall t \in \T, (i,j) \in \L, \\ 
    p^{g}_{t,i} \quad \cdots\cdots\quad
    & -\alpha_{t,i} + \gamma^{+}_{t,i} - \gamma^{-}_{t,i} = 0,  && \forall t \in \T, i \in \N, \label{eq:trilevel_dual_alpha_gamma}\\ 
    p^{c}_{t,i} \quad \cdots\cdots\quad
    & \alpha_{t,i} - \eta \cdot \tau_{t,i} + \nu^{+}_{t,i} - \nu^{-}_{t,i} = \lambda^c, && \forall t \in \T, i \in \N, \label{eq:trilevel_dual_pc}\\ 
    p^{d}_{t,i} \quad \cdots\cdots\quad
    & -\alpha_{t,i} + {1}/{\eta} \cdot \tau_{t,i} + \omega^{+}_{t,i} - \omega^{-}_{t,i} = \lambda^d, && \forall t \in \T, i \in \N, \label{eq:trilevel_dual_pd}\\ 
    p^{s}_{t,i} \quad \cdots\cdots\quad
    & \tau_{t,i} - \tau_{t+1,i} + \mu^{+}_{t,i} - \mu^{-}_{t,i} = 0, && \forall  t \in \T\setminus\{T\}, i \in \N, \\ 
    p^{s}_{T,i} \quad \cdots\cdots\quad
    & \tau_{T,i} + \mu^{+}_{T,i} - \mu^{-}_{T,i} = 0, && \forall i \in \N,\\
    p^{s}_{0,i} \quad \cdots\cdots\quad
    & \tau^{0}_{i} -\tau_{1,i} = 0, && \forall i \in \N, \\ 
    u_{0,i} \quad \cdots\cdots\quad
    & E_c^{\min} \nu^{+}_{t,i} - E_c^{\max} \nu^{-}_{t,i} - E_d^{\min} \omega^{+}_{t,i} + E_d^{\max} \omega^{-}_{t,i}  + \phi_{t,i} \ge 0, && \forall t \in \T, i \in \N,  \label{eq:trilevel_dual_u}\\
    & \bm \beta^\pm, \bm \gamma^\pm, \bm\nu^\pm, \bm\omega^\pm,\bm \mu^\pm,\bm\phi \ge \bm 0, \\
    & \sum_{i \in \N} x_i \le b, \\
    & \sum_{l \in \L} y_l \le k, \\
    & \bm x \in \{0,1\}^{N},\; \bm y \in \{0,1\}^{L}.
\end{align}
\end{subequations}
}
\begin{restatable}{proposition}{propdualboundbeta}\label{prop_dual_bound_beta}
Independent of the values of $\lambda^c$ and $\lambda^d$, there exists an optimal solution of \eqref{eq:trilevel_dual} for which the following inequalities are valid: 
\begin{align*}
     & 0\leq  \beta^{\pm}_{t,ij} \leq  2, \quad \forall t \in \T, \; (i,j) \in \L.
\end{align*}
\end{restatable}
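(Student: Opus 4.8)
The plan is to start from an arbitrary optimal solution of \eqref{eq:trilevel_dual} and overwrite only its $\bm\beta^\pm$ components by a canonical ``minimal'' choice, leaving every other variable ($\bm x,\bm y$, and the rest of $\bm\theta,\bm z$) untouched; then I would check that this surgery keeps the solution feasible, does not lower the objective, and automatically forces $\beta^\pm_{t,ij}\in[0,2]$. The first step is bookkeeping: inspecting \eqref{eq:trilevel_dual}, the pair $\beta^+_{t,ij},\beta^-_{t,ij}$ enters the formulation in only three places, namely the objective \eqref{eq:trilevel_dual_obj} through the term $-F_{ij}(1-y_{ij})(\beta^+_{t,ij}+\beta^-_{t,ij})$, the equality row associated with $f_{t,ij}$, i.e. $\alpha_{t,i}-\alpha_{t,j}+\beta^+_{t,ij}-\beta^-_{t,ij}=0$, and the sign constraint $\bm\beta^\pm\ge\bm0$. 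Crucially, no other dual row couples $\beta^\pm$ to the remaining variables, so they can be reset independently.

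Next I would define the replacement. For each $t\in\T$ and $(i,j)\in\L$, set $\tilde\beta^+_{t,ij}=\max\{\alpha_{t,j}-\alpha_{t,i},0\}$ and $\tilde\beta^-_{t,ij}=\max\{\alpha_{t,i}-\alpha_{t,j},0\}$, where the $\alpha$'s are those of the fixed optimal solution. Then $\tilde\beta^+_{t,ij}-\tilde\beta^-_{t,ij}=\alpha_{t,j}-\alpha_{t,i}$, so the $f_{t,ij}$-equality still holds, and $\tilde\beta^\pm_{t,ij}\ge0$, so sign feasibility is preserved; feasibility of the whole solution is therefore maintained. The bound then follows immediately: exactly one of the two is nonzero and $\tilde\beta^+_{t,ij}+\tilde\beta^-_{t,ij}=|\alpha_{t,j}-\alpha_{t,i}|$, while constraint \eqref{eq:trilevel_dual_alpha_bounds} forces $\alpha_{t,i},\alpha_{t,j}\in[-1,1]$, whence $0\le\tilde\beta^\pm_{t,ij}\le|\alpha_{t,j}-\alpha_{t,i}|\le2$, exactly the claimed range.

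The remaining step is to argue the modified solution is still optimal. Writing $d:=\alpha_{t,j}-\alpha_{t,i}$, any feasible choice obeys $\beta^+-\beta^-=d$, $\beta^\pm\ge0$, and the elementary identity $\beta^++\beta^-=|d|+2\min\{\beta^+,\beta^-\}\ge|d|=\tilde\beta^++\tilde\beta^-$ shows my replacement minimizes the sum $\beta^++\beta^-$ entrywise. Since the coefficient of that sum in the maximized objective \eqref{eq:trilevel_dual_obj} is $-F_{ij}(1-y_{ij})\le0$ (using $F_{ij}\ge0$ and $1-y_{ij}\ge0$), decreasing $\beta^++\beta^-$ can only weakly increase the objective. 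Hence the new solution is feasible with objective value at least that of the original optimum, so it is itself optimal and satisfies $0\le\beta^\pm_{t,ij}\le2$ for all $t$ and $(i,j)$.

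I do not expect a genuine obstacle here; the only point requiring care is the decoupling check, i.e. confirming that $\beta^\pm$ appears in no dual row other than its own defining equality and its sign constraint, so that overwriting it cannot violate any other row of \eqref{eq:trilevel_dual}. Once that is verified, the interval $[0,2]$ is just the image of $\alpha\in[-1,1]$ under the map $|\alpha_{t,j}-\alpha_{t,i}|$, and the sign of the objective coefficient guarantees optimality is retained.
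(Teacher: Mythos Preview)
Your proposal is correct and follows essentially the same approach as the paper's proof: both isolate the $(\beta^+_{t,ij},\beta^-_{t,ij})$ pair, observe it only appears in its own equality row, the nonnegativity constraints, and the objective with nonpositive coefficient $-F_{ij}(1-y_{ij})$, then replace it by the minimal-sum choice $\beta^++\beta^-=|\alpha_{t,j}-\alpha_{t,i}|\le 2$. Your version is slightly more explicit in writing down the closed-form replacement $\tilde\beta^\pm=\max\{\pm(\alpha_{t,j}-\alpha_{t,i}),0\}$ and in verifying the decoupling from the remaining dual rows, but the underlying argument is the same.
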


\begin{proof}
In the objective function \eqref{eq:trilevel_dual_obj}, we focus on optimizing $\bm \beta^+$ and $\bm \beta^-$. Notice that $F_{ij}\geq 0$ for all $(i,j) \in \L$ and  $t\in {T}$. Then, for a given $(i,j) \in \L$ and  $t\in {T}$ and the associated $\beta^+_{t,ij}, \beta^-_{t,ij}$, we optimize
\begin{align*}
\max_{\beta^+_{t,ij}\geq 0, \beta^-_{t,ij}\geq 0} &    -\beta^-_{t,ij} - \beta^+_{t,ij}, \\
    \textup{s.t.} \quad  &  \beta^+_{t,ij}  = \alpha_{t,j} -   \alpha_{t,i}  +\beta^-_{t,ij},
\end{align*}
which is equivalent to optimizing
\begin{align*}
v^\beta_{t,ij} =\min_{\beta^+_{t,ij}\geq 0, \beta^-_{t,ij}\geq 0} &    \beta^-_{t,ij} + \beta^+_{t,ij},\\
    \textup{s.t.} \quad  &  \beta^+_{t,ij} -\beta^-_{t,ij} = \alpha_{t,j} -   \alpha_{t,i},
\end{align*}
where the optimal value is $   v^{\beta^*}_{t,ij} =|\alpha_{t,j} - \alpha_{t,i}|$. From constraint \eqref{eq:trilevel_dual_alpha_bounds}, we know $-1\leq \alpha_{t,i}\leq 1$ and $-1\leq \alpha_{t,j}\leq 1$, then we have $0\leq v^{\beta^*}_{t,ij} \leq 2$, which implies that 
\begin{align*}
   0\leq \beta^-_{t,ij} + \beta^+_{t,ij} \leq 2.
\end{align*}
Hence, we have the desired result.
\QEDA
\end{proof}
We can then use McCormick inequalities to exactly reformulate the bilinear terms of the form $\bm \beta^{\pm} \bm y$ that appear in dualizing the third level of this trilevel problem.

In the objective function, the only bilinear terms are $\{y_{ij}\beta^+_{t,ij} \}_{t\in \T, (i,j)\in \L}$ and $\{y_{ij}\beta^-_{t,ij} \}_{t\in \T, (i,j)\in \L}$.
Since we show that all dual variables are bounded, especially $\bm 0 \le \bm \beta^\pm \le \bm 2$, bilinear terms can be reformulated exactly by applying the McCormick Envelopes (see, e.g., \citealt{mccormick1976computability}):
\begin{subequations}
\label{eq:bilinear_reform}
\begin{align*}
    & z^+_{t,ij} \geq 0, \quad z^+_{t,ij}  \geq \beta^+_{t,ij}+2y_{ij}-2, \quad  z^+_{t,ij} \leq \beta^+_{t,ij}, \quad z^+_{t,ij} \leq 2y_{ij}, && \forall  t \in \T, (i,j) \in \L,\\
     & z^-_{t,ij} \geq 0, \quad z^-_{t,ij}  \geq \beta^-_{t,ij}+2y_{ij}-2, \quad z^-_{t,ij} \leq \beta^-_{t,ij}, \quad z^-_{t,ij} \leq 2y_{ij}, && \forall t \in \T,(i,j) \in \L.
\end{align*} 
\end{subequations}
Note that this is an exact reformulation of the bilinear terms, not a relaxation. 
Hence, the objective function can replaced with the reformulation and additional constraints from \eqref{eq:bilinear_reform} are added to the formulation. This completes converting the trilevel formulation to bilevel formulation.

\end{appendices}

\end{document}